\documentclass[12pt, oneside]{amsart}
\usepackage[utf8]{inputenc}

\usepackage{multirow}
\usepackage{amssymb,amsmath,amsfonts,amsthm}
\usepackage{hyperref}
\usepackage{verbatim}

\newcommand{\prk}{\operatorname{prk}}

\newcommand{\Aut}{\operatorname{Aut}}

\newcommand{\Inn}{\operatorname{Inn}}
\newcommand{\Inndiag}{\operatorname{Inndiag}}

\numberwithin{equation}{section}

\newtheorem{lemma}{Lemma}[section]
\newtheorem{theorem}{Theorem}

\theoremstyle{remark}
\newtheorem{rem}{Remark}[section]

\begin{document}


\title[Recognition of finite simple groups by element orders]{\textsc{The problem of recognition of finite simple groups by element orders is solved}}

\author{\textsc{M.A. Grechkoseeva}}
\address{Novosibirsk State University, Pirogova, 1, Novosibirsk 630090, Russia}
\email{grechkoseeva@gmail.com}

\author{\textsc{A.M. Staroletov}}
\address{
Sobolev Institute of Mathematics, Koptyuga 4, Novosibirsk 630090, Russia}
\email{staroletov@math.nsc.ru}

\author{\textsc{A.V. Vasil'ev}}
\address{Novosibirsk State University, Pirogova, 1, Novosibirsk 630090, Russia;\newline \hspace*{3mm}  Sobolev Institute of Mathematics, Koptyuga 4, Novosibirsk 630090, Russia}
\email{vasand@math.nsc.ru}

\begin{abstract}

For a finite group $G$, let $\omega(G)$ be the set of element orders of $G$ and let $h(G)$ be the number of pairwise nonisomorphic finite groups $H$ with $\omega(H)=\omega(G)$. We say that the recognition problem is solved for $G$ if the number $h(G)$ is known, and if it is finite, then all finite groups $H$ with $\omega(H)=\omega(G)$ are listed. We complete the solution of the recognition problem for all finite simple groups.
\smallskip

\noindent\textsc{Keywords:} simple group, classical group, element order, recognition by spectrum.
\end{abstract}

\begingroup
\def\uppercasenonmath#1{} 
\let\MakeUppercase\relax 
\maketitle
\endgroup

\section{Introduction}

Given a group $G$, we denote by $\omega(G)$ the set of element orders of~$G$, that is,
$$
\omega(G)=\{|x| : x\in G\}.
$$
Following \cite{84Ady.t}, we refer to this set as the {\em spectrum} of~$G$. Groups with the same spectrum are said to be {\em isospectral}.
General question that we are interested in is what can be said about groups isospectral to a given group~$G$.

In this article, we restricted ourselves to the case when $G$ and groups isospectral to $G$ are finite. The deep and difficult question of recovering of the properties of an arbitrary group from its spectrum lies outside the scope of this paper, and we refer the reader to \cite{18HLM, 14LytMaz, 09MazShi}.

For a finite group $G$, we denote by~$h(G)$ the number of pairwise nonisomorphic finite groups isospectral to~$G$.  We say that the {\em recognition problem is solved} for $G$ if the number $h(G)$ is known, and if it is finite, then all the finite groups isospectral to $G$ are described. A~group $G$ is called {\em recognizable} ({\em by spectrum}) if $h(G)=1$ and {\em almost recognizable} if $h(G)<\infty$.

Observe that $h(G)$ is finite only if $G$ does not have nontrivial normal abelian subgroups. More precisely, $h(G)=\infty$ if and only if $G$ is isospectral to a group with nontrivial normal abelian subgroup~\cite{12MazShi.t}. Thus, the recognition problem is solved for groups with nontrivial solvable radical.

Most basic groups with trivial solvable radical are definitely nonabelian simple groups. The purpose of this article is to provide the solution of the recognition problem for all finite simple groups. Before formulating the main result, we briefly touch upon the history of the issue (see also the recent survey \cite{23Survey} and the references therein).

It is not easy to pinpoint the exact moment when the first results allowing to recognize finite simple groups appeared. At the turn of the 19th and 20th centuries, Burnside \cite{99Bur} described the finite groups in which all elements of even order are involutions (see also \cite[Theorem XI.2.7]{82HupBl3}, where this result was attributed to Brauer, Suzuki and Wall's paper of 1958). One can easily deduce from this result that $h(PSL_2(q))=1$ for all even $q>2$.

Another result of this kind can be extracted from the classification of finite groups whose spectra consist of prime powers only (the EPPO-groups, for brevity). The study of these groups was started in 1957 by Higman \cite{57Hig}, simple groups among them were found by Suzuki \cite[Theorem~16]{62Suz} in his seminal paper of 1962, the classification of EPPO-groups was completed by Brandl  \cite{81Bra} in 1981 (with one minor inaccuracy, the group $M_{10}$ was omitted). The classification implies that seven of eight nonabelian simple EPPO-groups are uniquely determined by their spectra in the class of all finite groups. The only exception is the group $A_6\simeq PSL_2(9)$ with spectrum $\{1,2,3,4,5\}$. As readily seen, the same spectrum has the semidirect product $V\rtimes SL_2(4)$, where $V$ is the natural $2$-dimensional ${SL}_2(4)$-module. Thus, $h(A_6)=\infty$.

The Chinese mathematician Shi was apparently the first one who explicitly formulated the problem of recognizing simple group by its element orders. In the middle of 1980s, studying the EPPO-groups, Shi \cite{84Shi, 86Shi} noticed that groups $A_5$ and $PSL_2(7)$ are uniquely characterized by their spectra in the class of finite groups, and he asked which finite simple groups have the same property. That was the time when the classification theorem (CFSG) was announced and the complete list of simple groups became available. Let us recall here, for convenience, that according to the classification, see, e.g., \cite{94GorLySol}, the finite simple groups are exactly the following groups:
\begin{enumerate}
\item the groups of prime order;
\item the alternating groups $A_n$, $n\geq5$;
\item the simple classical groups $L_n(q)$, $n\geq2$; $U_n(q)$, $n\geq3$; $S_{2n}(q)$, $n\geq2$; $O_{2n+1}(q)$ $n\geq3$; $O^{\pm}_{2n}(q)$, $n\geq4$;
\item the simple exceptional groups of Lie type $G_2(q)$, $F_4(q)$, $E_6(q)$, $E_7(q)$, $E_8(q)$, ${}^2B_2(q)$, ${}^3D_4(q)$, ${}^2G_2(q)$, ${}^2F_4(q)$, ${}^2F_4(2)'$, ${}^2E_6(q)$;
\item the 26 sporadic groups.
\end{enumerate}

Note that in this list and further in the text we use the one-letter notation for simple classical groups from the \emph{Atlas of Finite Groups} \cite{85Atlas}.

Returning to the recognition problem, since there are nonabelian simple groups $L$ with $h(L)=\infty$, two ways to handle the problem appear. The first one is to add  another condition to the equality $\omega(G)=\omega(L)$. In 1987, Shi conjectured \cite{89ShiS} that every nonabelian simple group is uniquely determined by its spectrum and order in the class of all finite groups. Shi's conjecture turned to be right. This was finally proven in 2009, see \cite{09VasGrMaz1.t}.


Another way is to look at all groups $G$ isospectral to a given simple group $L$. In the middle of 2000s, Mazurov proposed a conjecture
whose essence can be formulated as follows: If $L$ is a nonabelian simple group, then for a reasonable number of exceptions, each of which can be explicitly described, the equality $\omega(G)=\omega(L)$ implies that, up to isomorphism,
\begin{equation}\label{eq:ar}
L\leq G\leq \Aut L,
\end{equation}
where, as usual, $L$ is identified with $\Inn L$. Recall that a group $G$ satisfying \eqref{eq:ar} is called an almost simple group with socle $L$.

The main result of the present article shows that Mazurov was right: in most cases, even without the condition $|G|=|L|$, a group $G$ isospectral to $L$  must be very close to~$L$.

%

\begin{theorem}\label{t:solution}
Suppose that $L$ is a finite nonabelian simple group. \begin{enumerate}
\item If $L$ is one of the groups $A_6$, $A_{10}$, $J_2$, $^3D_4(2)$, $L_3(3)$, $U_3(5)$, $U_3(q)$, where $q$ is a~Mersenne prime such that $q^2-q+1$ is also a prime, $U_5(2)$, $S_4(3)$, $S_4(q)$, $q\neq 3^{2k+1}$ with $k\geq 1$, $S_8(q)$,  $q\neq 7^{k}$, or $O_9(q)$, then $h(L)=\infty$.
\item If $L$ is one of the groups $S_6(2)$, $O_8^+(2)$, $O_7(3)$, $O_8^+(3)$, then $h(L)=2$, with
$\omega(S_6(2))=\omega(O_8^+(2))$ and $\omega(O_7(3))=\omega(O_8^+(3))$.

\item Otherwise, every finite group $G$ with $\omega(G)=\omega(L)$ is an almost simple group with socle isomorphic to~$L$ and all such $G$ are known.
\end{enumerate}
\end{theorem}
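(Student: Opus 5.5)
This theorem collects a large body of prior work, so the proof will mostly be an organized reduction to known results, followed by completing the few remaining cases. The first step is the general structural theorem for groups isospectral to simple groups. If $L$ is a nonabelian simple group and $G$ is a finite group with $\omega(G)=\omega(L)$, then either $G$ has a nontrivial normal abelian subgroup, or $G$ is almost simple. In the almost simple case the socle $S$ satisfies $L\le S\le\Aut L$ up to the known coincidences. This is obtained via the Gruenberg--Kegel prime graph: one shows that the solvable radical $K$ of $G$ is nilpotent (a Frobenius/Hall-type argument), and that $G/K$ has a unique nonabelian composition factor $S$ with $\omega(S)\subseteq\omega(L)$. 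The known results on this dichotomy can be cited (Vasil'ev, Gorshkov, Grechkoseeva and others) for all $L$ except a short list.

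By \cite{12MazShi.t}, $h(L)=\infty$ exactly when some isospectral group has a nontrivial normal abelian subgroup. So the remaining work splits into two tasks.

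The first task is to show that $K=1$ for every $L$ not listed in item~(1). For the groups in item~(1), we exhibit an explicit module instead. For example, $V\rtimes SL_2(4)$ handles $A_6$. For $S_4(q)$, $S_8(q)$ and $O_9(q)$ one uses semidirect products $V\rtimes H$, where $V$ is a suitable module (the natural or spin module) for a subgroup or cover $H$. The aim is that every element of $V\rtimes H$ has order already in $\omega(L)$. The arithmetic exceptions ($q=3^{2k+1}$, $q=7^k$, the Mersenne condition for $U_3(q)$) come precisely from checking when the new orders $p\cdot|h|$ land in $\omega(L)$.

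To prove $K=1$ in the other cases, take a minimal counterexample with $K$ elementary abelian. Then $G/K$ acts on $K$, and we need a fixed-point lemma: some element of $G/K$ of suitable order acts on the $r$-group $K$ without fixed points, contrary to the orders it would create. Concretely, we find an element $g$ such that $r\cdot|g|\notin\omega(L)$ while every irreducible module of $S$ in cross or defining characteristic forces $g$ to have an eigenvalue $1$. This is done case by case, using the known minimal degrees and unipotent-element results (Hall--Higman type theorems).

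The second task is, given that $G$ is almost simple with socle $S$, to prove $S\cong L$. The exceptions here are the spectral coincidences in item~(2), namely $\omega(S_6(2))=\omega(O_8^+(2))$ and $\omega(O_7(3))=\omega(O_8^+(3))$. The method is to compare the sets of maximal element orders, or equivalently the prime graphs and the orders of semisimple and unipotent elements. Finally, we determine which extensions $S\le G\le\Aut S$ have the same spectrum; this is done by computing the orders of elements in cosets of field, graph and diagonal automorphisms.

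The main obstacle is the residual small-rank classical families where the fixed-point argument is delicate. For these I would first try to choose $g$ as a regular semisimple element in a Singer-type torus of largest order, and fall back on explicit Brauer character computations for small $q$.
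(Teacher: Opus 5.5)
Your proposal has a genuine gap, and it sits where the difficulty of the theorem lies. The ``general structural theorem'' you start from is not an available result; it is essentially the conclusion of the theorem. For groups of Lie type with $t(L)\geq 3$, what is known is weaker. Higman's theorem gives nonsolvability, and Vasil'ev's theorem, using an odd prime $r$ with $2r\notin\omega(L)$, shows that $G$ has a unique nonabelian composition factor $S$ with $S\le G/K\le\Aut S$, where $K$ is the solvable radical. Nothing there identifies $S$ with $L$.

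For this reason your ordering of tasks cannot work. A fixed-point argument proving $K=1$ needs to know which group acts on $K$ and which modules occur, so it cannot come before identifying $S$. Moreover, there really are configurations with $S\not\simeq L$ and $K\neq 1$: for instance $3^5:M_{11}$ for $U_5(2)$, and $V\rtimes\Omega_8^-(q)$ for $O_9(q)$. In the paper's route, $K=1$ is a \emph{consequence} of $S\simeq L$. Then $H$ is a cover of $L$ isospectral to $L$, and recognition among covers gives $K=1$; this is where your Hall--Higman and fixed-point ideas belong. The almost simple extensions isospectral to $L$ are then read off from the known classification.

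The missing idea is how to prove $S\simeq L$ when $K$ and $\overline G/S$ may be nontrivial. Your second task assumes $G$ is already almost simple and compares maximal orders or prime graphs of $S$ and $L$. But one only has $\omega(S)\subseteq\omega(L)$, and primes of $L$ may divide $|K|\cdot|\overline G/S|$, so spectra cannot be compared directly.

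The hard case is $S$ a classical group over a field of order $u$ in characteristic $v\neq p$. This occupied the earlier papers for $t(L)\geq 23$, $t(L)\geq 14$ and $t(L)\leq 5$, and it is the content of Theorem~\ref{t:main}. The argument there runs as follows:
\begin{enumerate}
\item show $t(L)\leq t(S)\leq t(L)+1$;
\item show that primes large with respect to $L$ are coprime to $v|K||\overline G/S|$ and lie in $\pi(S)$;
\item match primitive prime divisors, $R_i(q)\subseteq R_j(u)$ and $k_i(q)\mid k_j(u)$;
\item exploit either the unique maximal orders $m_i(q)$ (giving $m_j(u)\mid m_i(q)$) or the bound $2u^3<q^2$ when $t(S)=t(L)+1$;
\item derive incompatible inequalities between $q$ and $u$.
\end{enumerate}
Nothing in your proposal plays this role.

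You also place the main obstacle in small-rank fixed-point computations. In fact the last open cases were symplectic and orthogonal groups in odd characteristic with $6\leq t(L)\leq 13$, and the obstacle there is ruling out such a cross-characteristic $S$.

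Finally, for item~(i): showing that a particular construction fails for $S_4(3^{2k+1})$, $S_8(7^k)$ or the other $U_3(q)$ does not prove $h(L)<\infty$. That requires showing that no isospectral group has a nontrivial normal abelian subgroup.
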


\begin{rem}\label{r:Mers}
At present, 52 Mersenne primes are known \cite{GIMPS}. The only primes $q$ among them such that $q^2-q+1$ is also a prime are $3$ and~$7$, and the conjecture is that there are no other such primes, see also \cite[Remark~1.1]{26GSV_arxiv} and \cite{06Zav.t}.
\end{rem}

\begin{rem}\label{r:i} For each group $L$ from item (i) of the theorem, \cite[Table~10]{23Survey} provides an example of a group $G$ such that $G$ is isospectral to $L$ and includes a nontrivial normal abelian subgroup. As we know from the above discussion, such an example guarantees that $h(L)=\infty$. The groups $L_3(3)$, $U_3(3)$, and $S_4(3)$ are the only nonabelian simple groups isospectral to solvable groups, see, e.\,g., \cite[Theorem~3.3]{23Survey} and the references before it. For every other group $L$ from item (i) of the theorem, each group $G$ isospectral to $L$ has the normal series $1\leq K\leq H\leq G$, where $K$ is the solvable radical of $G$, $H/K\simeq S$ is a nonabelian simple group, and $G/K$ is an almost simple group with socle $H/K$. 
Furthermore, $G/H$ is cyclic \cite{22GreVas} and, if $L\neq A_{10}$, then $K$ is nilpotent \cite{20YanGrVas}.
\end{rem}

\begin{rem}\label{r:ii} For a nonabelian simple group $L$, there is an alternative way to define almost recognizability. Namely, $L$ is called {\em almost recognizable} if every finite group $G$ isospectral to $L$ satisfies \eqref{eq:ar}, cf., e.\,g., \cite{26GSV_arxiv}. As Theorem~\ref{t:solution} shows, the only nonabelian simple groups with $h(L)<\infty$ that are not almost recognizable in that sense are four groups from item (ii) of the theorem.
\end{rem}

\begin{rem} If $L$ is an almost recognizable simple group from item (iii) of the theorem, then the precise value of $h(L)$ and the description of all groups $G$ isospectral to $L$ can be readily extracted from \cite[Tables~1--9]{23Survey}. In the case when $L$ is classical group (Tables~1--6), one should proceed as follows: in the table corresponding to the type of $L$, find the section containing the condition of the form ``$n\geq n_0$'' and take $h(L)$ and the description of isospectral groups from this section. We decided not to repeat Tables 1--9 of \cite{23Survey} in the present article since they need to be supplemented by explanations of how to use them from \cite[Section 2.3]{23Survey} and 
notation for outer automorphisms from \cite[Section 2.2]{23Survey}. Nevertheless, let us observe that $G/L$ is cyclic for each group $G$ from item (iii) of the theorem. Moreover, $G/L=1$ provided $L$ is alternating or sporadic. This is not generally true when $L$ is a group of Lie type, say, $h(L_3(7^{3^k}))=k+1$ for each $k\geq0$, so $h(L)$ can take as a value any positive integer.
\end{rem}

\begin{rem} By \cite[Corollary~4.1]{23Survey}, if $G$ is a finite group with $h(G)<\infty$, then every locally finite group isospectral to $G$ must be finite. It follows that Theorem~\ref{t:solution} also provides the characterization of finite simple groups by their spectra in the class of locally finite groups.
\end{rem}

From the above discussion, it is clear that Theorem~\ref{t:solution} is a final point of a long journey (the 2023 survey \cite{23Survey} already cites more than 100 papers by about 30 mathematicians on this very subject). The real impact of the present article is presented in Theorem~\ref{t:main} below. To state it, we need some additional terminology. For a finite group $G$, let $\pi(G)$ be the set of prime divisors of the order of $G$ and denote by $t(G)$ the greatest size of a subset $\rho$ of $\pi(G)$ such that $pq\not\in\omega(G)$ for any two distinct primes $p,q\in\rho$. Note that the dimension of a simple classical group $L$ and $t(L)$ are linearly equivalent, see Table~\ref{tab:tL} in Section~\ref{s:pre}; furthermore, the exact dimensions of symplectic and orthogonal groups with $6\leq t(L)\leq 13$ can be found in Table~\ref{tab:6tL14}.

\begin{theorem}\label{t:main}
Suppose that $L$ is a finite nonabelian simple symplectic or orthogonal group over field of odd characteristic and $6\leq t(L)\leq 13$. Then every finite group isospectral to $L$ is an almost simple group with socle isomorphic to~$L$.
\end{theorem}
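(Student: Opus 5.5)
We follow the standard three-stage scheme used for recognition of classical groups, combining it with the known partial results for symplectic and orthogonal groups in odd characteristic. Let $L$ be as in Theorem~\ref{t:main}, over $\mathbb{F}_q$ with $q=p^m$ odd, and let $G$ be finite with $\omega(G)=\omega(L)$.

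\textbf{Step 1: reduction to a unique nonabelian composition factor.} Since $t(L)\geq 6>3$ and the Gruenberg--Kegel graph of $L$ has $2$ nonadjacent to some odd prime (true for these symplectic and orthogonal groups), Vasil'ev's structure theorem gives a normal series $1\leq K< H\leq G$. Here $K$ is the solvable radical, $S=H/K$ is nonabelian simple, $G/K\leq\Aut S$, and $t(S)\geq t(L)-1$. Any coclique of size at least $3$ of $\pi(G)$ lies in $\pi(S)$ up to one prime, and the large Zsigmondy primes $r_{i}(q)$ for the largest indices $i$ lie in $\pi(S)\setminus\pi(K\cdot G/H)$.

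\textbf{Step 2: identification of $S$.} Next we show $S\simeq L$. Using the adjacency criterion for primes in groups of Lie type (Table~\ref{tab:tL} and Table~\ref{tab:6tL14}), we compare $t(S)$, $t(2,S)$, and the set of primitive prime divisors in $\pi(S)$ with those of $L$. The bound $t(S)\leq 13+\ldots$ limits $S$ to alternating groups of bounded degree, sporadic groups, exceptional groups, and classical groups of bounded dimension. Alternating and sporadic candidates are excluded by comparing the largest primes with the exponents of $L$. For groups of Lie type in characteristic $v\neq p$, we apply the known result that $v$-elements of $S$ force an unbounded exponent or an element order absent from $\omega(L)$; this is handled through the $2$-adic and $p$-adic parts of $\omega(L)$. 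When $v=p$, comparing the maximal element orders $\max\omega$ and the Zsigmondy sets pins down the field and dimension. The remaining ambiguities are the pairs of the form $S_{2n}(q)$ versus $O_{2n+1}(q)$, and the small orthogonal coincidences. We separate these by exhibiting a specific element order, namely a product of $p^{k}$ with a semisimple order, lying in exactly one of the two spectra.

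\textbf{Step 3: $K=1$.} This is where we expect the main obstacle, and it is precisely why earlier work stopped short of small $t(L)$. Suppose $K\neq 1$; passing to a quotient, we may take $K$ to be an elementary abelian $r$-group on which $H$ acts. We must produce an element of $K\rtimes L$ whose order is not in $\omega(L)$. The approach is as follows. For $r\neq p$, choose a Frobenius-type subgroup of $L$, for instance a cyclic torus $T$ of order divisible by $r_{i}(q)$ normalized by an element of order dividing a suitable $p$- or $2$-power. By the Hall--Higman type lemma, either $T$ has fixed points on $K$ (giving order $r\cdot|T|\notin\omega(L)$) or the normalizer forces $r\cdot p^{k}$ orders that are absent. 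For $r=p$, we use unipotent elements of $L$ whose Jordan block sizes, acting in any module in characteristic $p$, generate an order $p^{k+1}$ exceeding the $p$-exponent of $L$, or $p\cdot\max$ of some semisimple part. The difficulty is that with small $t(L)$ the dimension is too low for the generic "enough tori" argument. So we plan to go case by case through the dimensions in Table~\ref{tab:6tL14}, using explicit maximal tori and their normalizers from the Weyl group, and handle low $q$ (e.g. $q=3,5,7$) separately via character-free module arguments.

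\textbf{Step 4: conclusion.} Finally, with $K=1$ and $S\simeq L$ we get $L\leq G\leq\Aut L$, as required.
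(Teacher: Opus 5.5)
There is a genuine gap, and it sits exactly where the content of the theorem lies. The reductions in your Steps~1--2 are already in the literature and are summarized here in Lemmas~\ref{l:reduction} and~\ref{l:rest}:
\begin{itemize}
\item $G$ has a unique nonabelian composition factor $S$;
\item alternating, sporadic and exceptional $S$ are excluded;
\item a same-characteristic $S$ forces $S\simeq L$ in this range of $t(L)$.
\end{itemize}
What remains is the case where $S$ is a classical group over a field of order $u$ and characteristic $v\neq p$, with $K$ nilpotent and $t(L)\le t(S)\le t(L)+1$. You dispose of this case in one sentence, citing a ``known result'' that $v$-elements of $S$ force an unbounded exponent or a forbidden element order. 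No such result exists. Cross-characteristic $S$ had been excluded only for $t(L)\ge 23$ \cite{15Vas} and then $t(L)\ge 14$ \cite{17Sta}; the range $6\le t(L)\le 13$ is precisely what was open. Nor is the suggested mechanism viable: $v$ is typically just a prime of $\pi(L)$ whose powers legitimately occur in $\omega(L)$. Nothing about the $2$- or $p$-parts of $\omega(L)$ rules out a cross-characteristic $S$ a priori.

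What you would need to supply is the arithmetic comparison of primitive prime divisors that makes up the paper's proof.
\begin{itemize}
\item Large primes $r_i(q)$ of $L$ lie in $\pi(S)$ with $t(r_i(q),S)\ge t(L)$ (Lemma~\ref{l:primes}). So, up to one exception per coclique, $k_i(q)$ divides some $k_j(u)$ with $j\in J(S)$ (Lemma~\ref{l:trans}).
\item The $\{2\}$- and $\{p\}$-coclique structure and the vertices with disjoint neighbourhoods (Lemmas~\ref{l:J(p,L)}, \ref{l:disjoint}, \ref{l:unique}) place $R_i(q)$ inside $R_j(u)$ for suitable $i\in J'(p,L)$, $j\in J(v,S)$. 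This bounds $q$ from above in terms of $u$.
\item The opposite bound comes from $2u^3<q^2$ when $t(S)=t(L)+1$ (Lemma~\ref{l:primes}(v)).
\item When $t(S)=t(L)$, it comes from $m_j(u)\mid m_i(q)$, because $m_i(q)$ is the unique maximal element order of $L$ divisible by $r_i(q)$ (Lemma~\ref{l:disjoint}(iii)).
\end{itemize}
Playing these bounds against each other, type by type for $L$ and $S$, and settling the leftover small $u,q$ by explicit factorizations (Lemmas~\ref{l:tight}, \ref{l:lm_even}, \ref{l:R8}), is the whole proof. None of it appears in your proposal.

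Conversely, your Step~3, which you flag as the main obstacle, is not one. If $S\simeq L$, then $H$ is a cover of $L$ with $\omega(H)=\omega(L)$. $L$ is known to be recognizable among its covers, the only exceptions being ${}^3D_4(2)$, $U_5(2)$ and certain $U_3(q)$. So $K=1$ follows at once, with no case-by-case torus or module analysis. It is also not why earlier work stopped short of small $t(L)$: the barrier was the cross-characteristic case above.
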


Here we explain why to complete the solution of the recognition problem for all finite simple groups, it suffices to prove such a particular assertion (an even much more particular one, see the first paragraph of Section~\ref{s:structure}).

The solution of the recognition problem for sporadic groups was completed by Mazurov and Shi in 1998 \cite{98MazShi}, and for the alternating groups $A_n$, $n\geq5$, by Gorshkov in 2013 \cite{13Gor.t}. We refer the reader to \cite{98MazShi} and \cite{13Gor.t} for history of the subject and relevant links.

Now we assume that $L$ is a simple group of Lie type. Observe first that the recognition problem for the groups from item (ii) of Theorem~\ref{t:solution} was solved in 1997 by Mazurov \cite{98Maz.t}, and Shi and Tang~\cite{97ShiTan}.

Let us turn now to the groups with $t(L)=2$ (note that $t(L)>1$ for all nonabelian simple groups, see \cite{05VasVd.t}). According to \cite[Table~4]{11VasVd.t}, there is only one simple exceptional group of Lie type satisfying this condition, the Steinberg triality group ${}^3D_4(2)$. In 2013, Mazurov shows that $h({}^3D_4(2))=\infty$ \cite{13Maz.t} and, running a little ahead, it turned out that this is the only exceptional group $L$ of Lie type with $h(L)=\infty$.

According to \cite[Table~2]{11VasVd.t}, the simple linear and unitary groups with $t(L)=2$ are found among the groups of dimension $3$ over the fields of odd order, in particular, the groups $L_3(3)$ and $U_3(3)$ satisfy this condition. The recognition of $L_3(q)$ and $U_3(q)$ was completed by 2006 in Zavarnitsine's papers \cite{04Zav1.t} and \cite{06Zav.t}.

In view of \cite[Table~3]{11VasVd.t}, except of the pair  isospectral groups $S_6(2)$ and $O_8^+(2)$ from item (ii) of Theorem~\ref{t:solution}, the groups $S_4(q)$ are the only
groups among the symplectic and orthogonal ones such that $t(L)=2$. The recognition of groups $S_4(q)$ was completed by Mazurov in~2002, see \cite{02Maz.t}.

Now $t(L)\geq3$. Then a group $G$ with $\omega(G)=\omega(L)$ must be nonsolvable. Otherwise, let $\rho$ be a subset of maximal size of primes in $\omega(L)=\omega(G)$ such that $pq\not\in\omega(L)$ for every distinct $p,q\in\rho$. If $G$ is solvable, it includes a Hall $\rho$-subgroup whose spectrum  consists of prime powers only. Since $|\rho|=t(L)\geq3$, this contradicts well-known Higman's theorem \cite[Theorem~1]{57Hig}.

On the other hand, the spectrum of every finite simple group $L$ of Lie type includes an odd prime $r$ such that $2r\not\in\omega(L)$ \cite[Theorem~7.1]{05VasVd.t}. In view of the main result of \cite{05Vas.t}, it follows that a group $G$ isospectral to $L$ has exactly one nonabelian composition factor $S$, so $G$ has the following structure:
\begin{equation}\label{eq:str}
1\leq K\leq H\leq G,
\end{equation}
where $K$ is a solvable radical of $G$, $H/K\simeq S$ is a nonabelian simple group, and $G/K$ is an almost simple group with socle $H/K$.

Assume first that the isomorphism $S\simeq L$ takes place. Then not only $G$ but also both $H$ and $G/K$ in \eqref{eq:str} are isospectral to $L$, with $H$ being a cover of $L$, that is, a group homomorphically mapped onto $L$, and $G/K$ being an almost simple group with socle $L$.  We say that $L$ is recognizable by spectrum among covers (automorphic  extensions) if every cover of $L$ (almost simple group with socle $L$) isospectral to $L$ is isomorphic to $L$.


It turned out that the list of finite simple groups that are not recognizable among their covers is very short. Namely, it includes  only the groups ${}^3D_4(2)$, $U_5(2)$, and $U_3(q)$, where $q$ is a Mersenne prime such that $q^2-q+1$ is also a prime, see \cite[Theorem~3.6]{23Survey} and the references before this theorem.

On the other hand, simple groups of Lie type are mostly not recognizable among automorphic extensions, so the problem of describing almost simple groups of Lie type isospectral to the socle arises. The solution of this problem was completed by 2018, the last steps were done by Zvezdina for exceptional groups of Lie type \cite{16Zve.t} and Grechkoseeva for classical groups \cite{18Gr.t}. This work substantially uses the description of the spectra of finite simple groups of Lie type systematicaly given by Buturlakin, see \cite{08But.t, 10But.t, 18But.t} and the references therein.

Thus, it remains to verify that $S\simeq L$ for every group $G$ with $\omega(G)=\omega(L)$ (based on the terminology of \cite{02AlKon.t}, we call this step the quasirecognizability problem for brevity). For the exceptional groups of Lie type, the solution of the quasirecognizability problem was completed by Staroletov and Vasil'ev in 2014, see \cite{14VasSt.t} and references therein. Thus, we further assume that $L$ is a classical group (with $t(L)\geq3$).

The fact that $S$ cannot be an alternating or sporadic group, with one exception provided by $\omega(U_5(2))=\omega(3^5:M_{11})$, was proved  for symplectic and orthogonal groups in \cite{09VasGrMaz.t} and  for linear and unitary groups in \cite{11VasGrSt.t}. Furthermore, in the same two articles the case when $S$ is a group of Lie type over the field of the same characteristic as $L$ was considered. It turned out that in this case, $S\simeq L$ if $L$ is linear or unitary \cite[Theorem~3]{11VasGrSt.t}. If $L$ is symplectic or orthogonal, the situation is more complicated, and the same assertion is not generally true. In \cite{15VasGr1}, it was shown that either $S\simeq L$, or $L\in\{S_{8}(q), O_9(q)\}$ and $S=O_8^-(q)$ (recall that the cases when $L=S_4(q)$ or $L$ is from item~(ii) of Theorem~\ref{t:solution} have been already considered). In fact,  $\omega(O_9(q))=\omega(V\rtimes \Omega_8^-(q))$ for all $q$ \cite{14GrSt}, including the case when $q$ is even and $O_9(q)\simeq S_8(q)$, and  $\omega(S_8(q))=\omega(V\rtimes(\Omega_8^-(q):\langle\gamma\rangle))$  for all $q\not\in\{2^k,7^k\}$ \cite{18Gr.t}, where $V$ is a natural $8$-dimensional $\Omega_8^-(q)$-module and $\gamma$ is a graph automorphism of $\Omega_8^-(q)$. Finally, the groups $S_8(7^k)$ are recognizable by spectrum \cite{25Gr.t}.


The last and most difficult case is when $S$ is a group of Lie type but the characteristics of $S$ and $L$ differ. In 2015, Vasil'ev proved  that for every simple classical group $L$ with $t(L)\geq23$, a group $G$ isospectral to $L$ cannot have a composition factor isomorphic to a simple group of Lie type whose characteristic distinct from that of~$L$ \cite{15Vas}. This result made it possible to prove that all simple classical groups of sufficiently large dimension are almost recognizable, see \cite[Theorem~1.1]{15VasGr1}. Two years later Staroletov  \cite{17Sta} lowered the bound 23, showing that the same holds true if $t(L)\geq14$.

Thus, the recognition problem was reduced to the case when $t(L)\leq13$. The analysis of the remaining groups turned out to be quite technically complicated and took about ten years. The research was naturally divided in two directions. On the one hand, Staroletov \cite{21Sta.t,26Sta.t} solved  the recognition problem for simple linear and unitary groups with $t(L)\geq6$ and showed that $t(L)\leq t(S)\leq t(L)+2$ for symplectic and orthogonal groups with $5\leq t(L)\leq13$. On the other hand, Grechkoseeva and her colleagues completed the solution of recognition problem for the simple classical groups with $t(L)\leq4$, see \cite{19GrVasZv,20GrZv.t,25Gr.t,24GrPan.t,26Pan.t,26GreRod_arxiv}. The case $t(L)=5$ for all classical groups was quite recently resolved by Grechkoseeva, Staroletov and Vasil'ev  in~\cite{26GSV_arxiv}.  As readily seen from the above, this result completes, in particular, the solution of the recognition problem for the simple linear and unitary groups. Thus, Theorem~\ref{t:main} handling the case of the symplectic and orthogonal groups with $6\leq t(L)\leq13$ indeed finishes the proof of Theorem~\ref{t:solution}.

\section{Preliminaries}\label{s:pre}

\subsection{Number-theoretical results}

Let $a$ and $b$ be nonzero integers and $r$ a prime. The greatest common divisor and least common multiple of $a$ and $b$ are denoted by $(a,b)$ and $[a,b]$, respectively. The set of primes dividing $a$ is denoted by $\pi(a)$. We write $(a)_r$ for the  the highest power of $r$ dividing $a$ and set $(a)_{r'}=a/{(a)_r}$. If $r$ is odd and $(a,r)=1$, then $e(r,a)$ denotes the multiplicative order of $a$ modulo $r$. Define $e(2,a)$ to be $1$ if $4$ divides $a-1$ and to be $2$ if $4$ divides $a+1$. It is easy to see that $e(r,-a)=\nu(e(r,a))$, where

 \begin{equation}\label{e:nu}
 \nu(k)=\begin{cases} 2k& \text{if $k$ is odd},\\
                       k/2& \text{if } (k)_2=2,\\
                       k&\text{if } (k)_2>2.

\end{cases}
 \end{equation}

As usual, $\Phi_n(x)$ stands for the $n$th cyclotomic polynomial and $\varphi(x)$ for the Euler's totient function.

\begin{lemma}\label{l:r-part}
Let $a$ be an integer, $|a|\geq 2$, and let $r$ be an odd prime. For every integer $n\geq 1$, the following hold:
\begin{enumerate}
\item $r$ divides $\Phi_n(a)$ if and only if $(n)_{r'}=e(r,a);$
\item if $r^2$ divides $\Phi_n(a)$, then $n=e(r,a);$
\item if $r$  divides $a-1$, then $(a^n-1)_r=(n)_r(a-1)_r$.
\end{enumerate}
\end{lemma}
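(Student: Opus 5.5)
I would prove the three parts in the order (iii), (i), (ii), since (iii), the lifting-the-exponent statement, is the engine for the other two. Throughout let $e=e(r,a)$, and recall the factorization $a^m-1=\prod_{d\mid m}\Phi_d(a)$.

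\emph{Part (iii).} I would argue by induction on the prime factorization of $n$. If $r\nmid n$, then $\frac{a^n-1}{a-1}=1+a+\dots+a^{n-1}\equiv n \pmod r$ because $a\equiv1\pmod r$, so $(a^n-1)_r=(a-1)_r$. For $n=r$, write $a=1+tr^s$ with $s\geq1$ and $r\nmid t$. Expanding binomially, $a^r=1+tr^{s+1}+\binom r2t^2r^{2s}+\dots$. Every term after the second is divisible by $r^{s+2}$: for $\binom r2$ this uses that $r$ is odd, and for the last term $t^rr^{rs}$ it uses $rs\geq s+2$, which holds as $r\geq3$. Hence $(a^r-1)_r=r(a-1)_r$. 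Multiplicativity along $n=r^k m$ with $(m,r)=1$ then gives the formula.

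\emph{Part (i).} If $r\mid a$, then $\Phi_n(a)\equiv\Phi_n(0)=\pm1\pmod r$, so this case is trivial; assume $(r,a)=1$. First, if $r\mid\Phi_n(a)$ then $r\mid a^n-1$, so $e\mid n$. Write $n=r^k m$ with $(m,r)=1$; then $e\mid m$ because $e\mid r-1$. Next I would show $m=e$. Since $\Phi_n(x)$ divides $\frac{x^n-1}{x^{n/p}-1}$ for any prime $p\mid n$, suppose $p\mid m/e$, so $p\neq r$. Put $b=a^{n/p}$; then $b\equiv1\pmod r$ because $e\mid n/p$. Thus $\frac{b^p-1}{b-1}\equiv p\not\equiv0\pmod r$, which contradicts $r\mid\Phi_n(a)$. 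So $(n)_{r'}=e$.

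Conversely, suppose $n=r^ke$. For $k=0$, $r\mid a^e-1=\prod_{d\mid e}\Phi_d(a)$, and $r\nmid\Phi_d(a)$ for $d<e$ by the forward direction. For $k\geq1$, use $\Phi_{r^ke}(x)=\Phi_e(x^{r^k})/\Phi_e(x^{r^{k-1}})$. Alternatively, set $b=a^{r^{k-1}e}$. Then $\Phi_n(a)$ divides $\frac{b^r-1}{b-1}$, and by (iii) this quotient has $r$-part exactly $r$. Comparing $r$-parts of $a^{n}-1$ and $a^{n/r}-1$ via (iii), together with the fact that the only divisors $d$ of $n$ not dividing $n/r$ with $r\mid\Phi_d(a)$ are $d=n$, gives $(\Phi_n(a))_r=r$. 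In particular $r\mid\Phi_n(a)$.

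\emph{Part (ii).} The calculation just made also shows $(\Phi_n(a))_r=r$ exactly when $n=r^ke$ with $k\geq1$. So if $r^2\mid\Phi_n(a)$, then part (i) forces $n=r^ke$, and this computation forces $k=0$, that is, $n=e$.

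The main obstacle is the bookkeeping in the converse of (i) and in (ii): one must check that, among the cyclotomic factors of $a^{r^ke}-1$, the $r$-adic valuation increases by exactly $1$ at each level $r^je$. This follows from (iii) applied to $a^e$, but the indexing has to be done carefully.
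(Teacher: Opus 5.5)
Your proof is correct. It differs from the paper's only in that the paper gives no argument of its own: it cites Hering \cite[Theorem 3.3]{74Her} for (i) and (ii) and \cite[Lemma 3.4]{74Her} for (iii).

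You instead prove everything from scratch. For (iii) you use the binomial lifting-the-exponent computation at $n=r$, together with the trivial case $r\nmid n$. For (i) and (ii) you read off $r$-parts in $(a^n-1)/(a^{n/p}-1)=\prod_{d\mid n,\, d\nmid n/p}\Phi_d(a)$. There the forward half of (i) shows that, for $p=r$ and $n=r^ke$, the factor $\Phi_n(a)$ is the only one divisible by $r$.

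The citation is shorter, but your route buys two things. First, it is uniform in the sign of $a$. This matters because the paper applies the lemma with $a=\pm q$, and with a citation one has to check that the source covers $a<0$. Second, it yields the sharper statement $(\Phi_{r^ke}(a))_r=r$ for $k\geq 1$. That is precisely the fact $(\Phi_n(a))_s=s$ invoked via Hering in the proof of Lemma~\ref{l:kn}.

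There are two harmless slips. In (ii), your computation shows $(\Phi_n(a))_r=r$ \emph{whenever} $n=r^ke$ with $k\geq1$, not ``exactly when''. For example, $(\Phi_1(4))_3=3$ with $n=e(3,4)=1$. Only the direction you proved is used, so the argument stands. In (iii), you should also mention the middle terms $\binom{r}{k}t^kr^{ks}$ with $3\leq k\leq r-1$. These are divisible by $r^{1+ks}$ and hence by $r^{s+2}$.
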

\begin{proof}
See, for example, \cite[Theorem 3.3]{74Her} for (i) and (ii), and \cite[Lemma 3.4]{74Her} for (iii).
\end{proof}

Fix an integer $a$ with $|a|>1$.  A prime $r$ is said to be a {\it primitive prime divisor} of $a^n-1$ if $e(r,a)=n$.  Observe that this definition of a primitive prime divisor is taken from \cite{05VasVd.t} and slightly differs  from the standard one, which requires the multiplicative order of $r$ modulo $a$ to be equal to $n$. The only difference is that $2$ is regarded as a primitive prime divisor of $a^2-1$ when $a\equiv 3\pmod 4$.

We write $R_n(a)$ to denote the set of all primitive divisors of $a^n-1$ and
$r_n(a)$ to denote an arbitrary element of $R_n(a)$, if such exists. So if we say that something holds for $r_n(a)$, we mean that this holds for all $r\in R_n(a)$. Note that $R_n(-a)=R_{\nu(n)}(a)$. The existence of primitive divisors for almost all pairs $(a,n)$ was proved by Bang~\cite{86Bang} and Zsigmondy~\cite{Zs}

\begin{lemma}[Bang--Zsigmondy]\label{l:zsigmondy}
Let $n\geq 1$ and $a$ be integers, $|a|\geq 2$. If the pair $(a,n)$ is not in $\{(2,1),(2,6),(-2,2),(-2,3),(3,1),(-3,2)\}$, then
the set $R_n(a)$ is not empty.
\end{lemma}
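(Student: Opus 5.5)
The statement immediately above is the Bang--Zsigmondy theorem, which the paper quotes rather than reproves. Still, here is how I would prove it directly, working with the cyclotomic value $\Phi_n(a)$. By Lemma~\ref{l:r-part}(i), a prime $r\nmid a$ divides $\Phi_n(a)$ exactly when $(n)_{r'}=e(r,a)$. Such a prime is primitive for $a^n-1$ unless $n=e(r,a)\,r^k$ with $k\geq1$. In that case $r$ is the largest prime divisor of $n$: indeed $e(r,a)$ divides $r-1$ for odd $r$, so every other prime factor of $n$ is smaller than $r$. By Lemma~\ref{l:r-part}(ii), such an $r$ then divides $\Phi_n(a)$ only to the first power. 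The prime $2$ needs separate bookkeeping, because of the convention $e(2,a)\in\{1,2\}$.

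Consequently, if $R_n(a)$ is empty, then every prime dividing $\Phi_n(a)$ is non-primitive. The whole of $|\Phi_n(a)|$ is then at most $r$, where $r$ is the largest prime factor of $n$, possibly times a small power of $2$ when $n\leq 2$.

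Next I would bound $|\Phi_n(a)|$ from below. Write
\[
\Phi_n(a)=\prod_{\zeta}(a-\zeta),
\]
the product over the primitive $n$th roots of unity $\zeta$. Since $|a-\zeta|\geq |a|-1$, and the inequality is strict for $\zeta\neq\pm1$, we get $|\Phi_n(a)|\geq(|a|-1)^{\varphi(n)}$. For $n\geq3$ there is a sharper estimate: pair conjugate roots, or use the Möbius formula
\[
\Phi_n(a)=\prod_{d\mid n}(a^d-1)^{\mu(n/d)}
\]
to compare with $|a|^{\varphi(n)}$. In this way one can show $|\Phi_n(a)|>n\geq r$, except for finitely many small pairs $(|a|,n)$, essentially $|a|=2,3$ with $n\leq 6$.

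The main obstacle is making this estimate tight enough in the borderline case $|a|=2$, where $(|a|-1)^{\varphi(n)}=1$ gives nothing. There I would use
\[
|2-\zeta|^2=5-4\cos\theta\geq 1,
\]
with strict inequality away from $\theta=0$. This shows $|\Phi_n(\pm2)|$ grows roughly like $2^{\varphi(n)}/C$, which beats $n$ once $n$ exceeds a small threshold, say $n>12$. I would then check the remaining $n\leq 12$ for $a=\pm2,\pm3$ by direct computation. Finally, $n=1,2$ are handled by hand, and this is where the twisted convention for $e(2,a)$ enters. For example, $2$ counts as primitive for $a^2-1$ when $a\equiv3\pmod4$. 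In particular $R_1(a)$ is nonempty unless $a-1$ is a power of $2$ not divisible by $4$, i.e.\ $a-1\in\{1,2\}$, giving $a=2,3$. Going through these small cases should produce exactly the listed exceptions: $(2,1)$, $(3,1)$, $(-2,2)$, $(-3,2)$ from $|a\pm1|$ being powers of $2$, and $(2,6)$, $(-2,3)$ from $\Phi_6(2)=3$. The identity $R_n(-a)=R_{\nu(n)}(a)$ reduces negative bases to positive ones for the final check.
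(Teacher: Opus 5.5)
Your proposal is correct, but it takes a different route from the paper, which gives no proof at all. The lemma is quoted from Bang and Zsigmondy, and the routine translation of the classical statement to the paper's conventions is left to the reader. That translation has two parts. Negative bases are handled via $R_n(-a)=R_{\nu(n)}(a)$. The modified notion of primitivity puts $2\in R_2(a)$ whenever $4\mid a+1$, so the classical exceptions $(a,2)$ with $a+1$ a power of $2$ disappear and $(3,1)$ appears. The pairs $(-2,2)$, $(-3,2)$, $(-2,3)$ are then the images of $(2,1)$, $(3,1)$, $(2,6)$ under $\nu$.

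You instead reprove the core theorem by the classical cyclotomic argument. For $n\geq 3$, suppose $R_n(a)=\varnothing$. Lemma~\ref{l:r-part}, together with the parity bookkeeping ($2\mid\Phi_n(a)$ forces $n=2^k$, and then $\Phi_{2^k}(a)=a^{2^{k-1}}+1\equiv 2\pmod 4$), gives $|\Phi_n(a)|\in\{1,r\}$ with $r$ the largest prime factor of $n$. You play this off against a lower bound for $|\Phi_n(a)|$, and read off the cases $n\leq 2$ directly from $a\mp 1$. This buys a self-contained proof that shows exactly where each of the six exceptions comes from; the citation buys brevity.

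Two remarks on the analytic step.

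First, for $|a|\geq 3$ no case checking is needed if you compare with $r$ rather than with $n$. Since $r-1$ divides $\varphi(n)$, you get $|\Phi_n(a)|>(|a|-1)^{\varphi(n)}\geq 2^{\varphi(n)}\geq 2^{r-1}\geq r$. Your phrase ``$|a|=2,3$ with $n\leq 6$'' is therefore overly cautious: only $|a|=2$ needs small-case checks.

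Second, for $|a|=2$ the pointwise identity $|2-\zeta|^2=5-4\cos\theta$ only yields $|\Phi_n(\pm 2)|>1$. That is still useful, since it forces $|\Phi_n(\pm 2)|=r$ in a putative exceptional case. But it does not by itself give growth like $2^{\varphi(n)}$. That growth has to come from the M\"obius product (with $\mu$ the M\"obius function):
\[ \Phi_n(2)=2^{\varphi(n)}\prod_{d\mid n}(1-2^{-d})^{\mu(n/d)}\geq 2^{\varphi(n)}\prod_{d\geq 1}(1-2^{-d})>2^{\varphi(n)-2}. \]
After this only finitely many $n$ remain to check by hand, and $a=-2$ reduces to $a=2$ via $\nu$ as you say.
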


For $n\geq3$,  we denote the product of all primitive prime divisors of $a^n-1$ taken with multiplicities by $k_n(a)$. If $a$ is positive, then $k_n(a)$ is exactly $\Phi_n^*(a)$ introduced in \cite{74Her} and so it is calculated in \cite[Theorem 3.4]{74Her}.
We give a slightly different formula and, for convenience, extend it to negative $a$.

\begin{lemma}\label{l:kn}
Let $n\geq 3$ and $a$ be integers, $|a|\geq 2$. Then
\begin{equation}\label{eq:ki}
k_n(a)=\frac{\Phi_n(a)}{(r,\Phi_{(n)_{r'}}(a))},
\end{equation}
where $r$ is the largest prime dividing $n$. Moreover, $(r,\Phi_{(n)_{r'}}(a))=1$ unless $(n)_{r'}$ divides $r-1$.
\end{lemma}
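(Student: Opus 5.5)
We derive the formula from the factorization $a^n-1=\prod_{d\mid n}\Phi_d(a)$ together with Lemma~\ref{l:r-part}. Since $n\ge3$, we have $\Phi_n(a)\neq 0$. The first step is to decide which primes divide $\Phi_n(a)$. By Lemma~\ref{l:r-part}(i), an odd prime $s$ divides $\Phi_n(a)$ if and only if $(n)_{s'}=e(s,a)$. This happens in one of two ways. Either $n=e(s,a)$, and then $s$ is primitive. Or $n=e(s,a)s^k$ with $k\ge1$. In the second case $s\mid n$, and since $e(s,a)$ divides $s-1$, we get $s\mid n$ with $(n)_{s'}\mid s-1$.

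For $s=2$ we need a separate check. If $n\ge3$ and $a$ is odd, then $\Phi_n(a)$ is odd unless $n$ is a power of $2$. For $n=2^k\ge4$ we have $\Phi_n(a)=a^{2^{k-1}}+1\equiv 2\pmod 4$. Here $2$ is not primitive for $n\ge3$, because $e(2,a)\in\{1,2\}$. This matches the formula: the largest prime dividing $n$ is $r=2$, $(n)_{r'}=1$, and $\Phi_1(a)=a-1$ is even, so $(2,\Phi_1(a))=2$, exactly the $2$-part of $\Phi_n(a)$. If $a$ is even, all $\Phi_d(a)$ are odd and there is nothing to check.

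The second step identifies the nonprimitive odd divisors of $\Phi_n(a)$. Let $s$ be such a divisor, so $n=e\,s^k$ with $e=e(s,a)\mid s-1$. Then every prime divisor of $e$ is less than $s$, so $s$ is the largest prime dividing $n$, that is, $s=r$. Hence at most one nonprimitive prime, namely $r$, divides $\Phi_n(a)$.

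Next we need its exponent. By Lemma~\ref{l:r-part}(ii), $r^2\nmid\Phi_n(a)$ because $n\neq e(r,a)$. So $r$ divides $\Phi_n(a)$ exactly once. It occurs precisely when $e(r,a)=(n)_{r'}$, which is equivalent to $r\mid\Phi_{(n)_{r'}}(a)$ by Lemma~\ref{l:r-part}(i). Thus the nonprimitive part of $\Phi_n(a)$ equals $(r,\Phi_{(n)_{r'}}(a))$. The degenerate case $(n)_{r'}=e(r,a)=n$ cannot occur here, since it would make $r$ primitive. When $r\mid n$ and $(n)_{r'}\mid r-1$ this is automatic, but the identity does not need it.

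The third step shows that primitive divisors of $a^n-1$ occur in $a^n-1$ only through $\Phi_n(a)$. If $s$ is primitive for $n$ and $s\mid\Phi_d(a)$ with $d\mid n$, then $(d)_{s'}=n$, which forces $d=n$. For $s=2$, primitivity requires $n\le2$, which is excluded. Therefore the full multiplicity of $s$ in $a^n-1$ equals its multiplicity in $\Phi_n(a)$, and so $k_n(a)$ is the primitive part of $\Phi_n(a)$. Combining this with the second step gives \eqref{eq:ki}.

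For the final claim, suppose $(r,\Phi_{(n)_{r'}}(a))\neq1$. Then $r$ divides $\Phi_{(n)_{r'}}(a)$, so $e(r,a)=(n)_{r'}$ by Lemma~\ref{l:r-part}(i), and hence $(n)_{r'}\mid r-1$ for odd $r$. For $r=2$ we have $(n)_{r'}=1$, which divides $r-1=1$.

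Negative $a$ requires no extra work, because Lemma~\ref{l:r-part} is stated for all $|a|\ge2$. The main care point is the prime $2$ and the requirement $n\ge3$.
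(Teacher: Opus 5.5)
Your proof is correct, but it is organized differently from the paper's, mainly in how negative $a$ is handled.

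The paper first treats $a>0$. It quotes Hering's Theorems~3.3 and~3.4 for the fact that a nonprimitive prime $s$ dividing $\Phi_n(a)$ is the largest prime of $n$, and that for odd $s$ one has $(n)_{s'}=e(s,a)\mid s-1$ and $(\Phi_n(a))_s=s$. It then transfers the formula to $-a$ using three facts: $k_n(-a)=k_{\nu(n)}(a)$ (quoted from \cite{15Vas}), $\Phi_{\nu(n)}(a)=\Phi_n(-a)$, and the observation that $n$ and $\nu(n)$ have the same largest prime, with $\nu$ commuting with taking $r'$-parts.

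You instead derive the Hering-type facts yourself from Lemma~\ref{l:r-part}(i),(ii), together with a parity check for the prime $2$. Since Lemma~\ref{l:r-part} is stated for all $|a|\ge 2$, one pass covers both signs of $a$ with no $\nu$-bookkeeping. You also make explicit the ``moreover'' clause and the fact that primitive primes enter $a^n-1$ only through $\Phi_n(a)$; the paper leaves both implicit. The paper's route, in exchange, needs the cyclotomic divisibility facts only for positive bases.

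One small point your direct route should state. For $a<0$ you need $\Phi_n(a)>0$, so that $\Phi_n(a)$ is literally the product of its primitive and nonprimitive parts. This holds because for $n\ge 3$ the polynomial $\Phi_n$ is monic of even degree with no real roots. In the paper this positivity comes for free from the reduction to $a>0$.
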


\begin{proof}
Let $a$ be positive. For every $s\in R_n(a)$, it clear that $(a^n-1)_s=(\Phi_n(a))_s$ and so $k_n(a)=\Phi_n(a)/b$, where $b=\prod (\Phi_n(a))_s$ with $s$ runs over $\pi(\Phi_n(a))\setminus R_n(a)$. By \cite[Theorems 3.3 and 3.4]{74Her} if $s\in \pi(\Phi_n(a))\setminus R_n(a)$, then $s$ is the largest prime divisor of $n$. Furthermore, if $s$ is odd, then $(n)_{s'}=e(s,a)$ divides $s-1$ and $(\Phi_n(a))_s=s$.

If $s=2$, then $n=2^t\geq 4$ and $\Phi_n(a)=a^{n/2}+1$, so $b=(\Phi_n(a))_2=(2,a-1)$, as required. If $s>2$, then $b=(\Phi_n(a))_s=(s,\Phi_n(a))=(s,\Phi_{(n)_{s'}}(a))$.

Now we prove the claim for $k_n(-a)$. It is not hard to derive from the definition that $k_n(-a)=k_{\nu(n)}(a)$ (see, for example, \cite[Lemma 1.3]{15Vas}). Also  $n$ and $\nu(n)$ have the same largest prime divisor $r$. Hence $$k_{n}(-a)=k_{\nu(n)}(a)=\frac{\Phi_{\nu(n)}(a)}{(r,\Phi_{(\nu(n))_{r'}}(a))}.$$
We have $\Phi_{\nu(n)}(a)=\Phi_n(-a)$.
If $r=2$, then $\nu(n)=n$, $(n)_{r'}=1$ and $(2,\Phi_1(a))=(2,\Phi_1(-a))$. If $r>2$, then taking the $r'$-part and applying $\nu$ commute, so  $(r,\Phi_{(\nu(n))_{r'}}(a))=(r,\Phi_{(n)_{r'}}(-a))$, as required.
\end{proof}

\begin{lemma}\label{l:hering} Suppose that $a\geq 2$, $n\geq 2$ are integers, $n$ has exactly $k$ distinct prime factors and $m=\varphi(n)$. If $k$ is even, then $(a-1)a^{m-1}<\Phi_n(a)<a^m$. If $k$ is odd, then $a^{m}<\Phi_n(a)<a^{m+1}/(a-1)$.
\end{lemma}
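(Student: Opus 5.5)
We must prove Lemma~\ref{l:hering}: for $a\geq2$, $n\geq2$ with $k$ distinct prime factors and $m=\varphi(n)$, the stated bounds hold. The plan is to write $\Phi_n(a)=a^m\prod_{d\mid n}(1-a^{-d})^{\mu(n/d)}$, which follows from $\Phi_n(x)=\prod_{d\mid n}(x^d-1)^{\mu(n/d)}$ together with $\sum_{d\mid n}d\,\mu(n/d)=\varphi(n)$. So everything reduces to bounding
$$P=\prod_{d\mid n}(1-a^{-d})^{\mu(n/d)}$$
between $1-a^{-1}$ and $1$, or between $1$ and $a/(a-1)$, according to the parity of $k$.

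Only squarefree values of $n/d$ contribute. Since $1-a^{-d}$ increases with $d$, the factor with $d=n/\operatorname{rad}(n)\cdot 1$, i.e. $d=n/p_1\cdots p_k$, is the smallest in the product. Its exponent is $\mu(p_1\cdots p_k)=(-1)^k$. Take logarithms: $\log P=-\sum_{d}\mu(n/d)\sum_{j\geq1}a^{-dj}/j$. I would bound this alternating sum by the standard Hering-type argument. Group the divisors $d=n/s$ with $s$ squarefree, $s\mid p_1\cdots p_k$, and use $\sum_{s\mid p_1\cdots p_k}\mu(s)=0$. Then show that the total is dominated by the term with the smallest $d$, namely $d_0=n/(p_1\cdots p_k)$, whose contribution has sign $(-1)^{k+1}$ in $\log P$.

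Concretely, for $k$ odd the extreme factor $(1-a^{-d_0})^{-1}$ appears, giving $P>1$. Moreover $P\leq (1-a^{-d_0})^{-1}\leq a/(a-1)$, with strict inequality because the remaining factors pair up to a product $<1$. For $k$ even the factor $(1-a^{-d_0})$ appears with positive exponent, giving $P<1$ and $P>1-a^{-d_0}\geq 1-a^{-1}$. This is exactly $(a-1)a^{m-1}<\Phi_n(a)<a^m$.

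The main obstacle is making "the remaining factors pair up" rigorous. I would do this by induction on $k$. Pick a prime $p\mid n$ and write $\Phi_n(x)=\Phi_{n/p}(x^p)/\Phi_{n/p}(x)$ when $p^2\nmid n$, or $\Phi_n(x)=\Phi_{n/p}(x^p)$ when $p^2\mid n$. This relates $P$ for $n$ to the quotient of the corresponding products for $n'$ with one fewer prime (evaluated at $a^p$ and at $a$). The inequality then reduces to monotonicity in $a$ of the function $P_{n'}(a)$, which should follow from the induction hypothesis in the form $1-a^{-d}\leq P^{\pm1}\leq 1$. If the induction becomes messy, the fallback is the direct crude estimate $\sum_{d>d_0}a^{-d}\leq a^{-d_0}/(a-1)$ combined with $|\log(1-x)|\leq x/(1-x)$. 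That estimate has to be checked separately for the small case $a=2$, and I would simply cite \cite[Lemma 3.5]{74Her} or verify it directly there.
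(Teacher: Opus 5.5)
\textbf{There is a genuine gap: the half of each bound that is tight is not proved.}

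Your setup is fine. Write $n=d_0N$ with $N=p_1\cdots p_k$. Then $\Phi_n(a)=\Phi_N(a^{d_0})=a^m\Phi_N(x)$ with $x=a^{-d_0}\le 1/2$, so your $P$ equals $\Phi_N(x)=(1-x)^{\mu(N)}Q$, where $Q=\prod_{1<t\mid N}(1-x^t)^{\mu(N/t)}$.

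A bound on $|\log Q|$ gives the easy halves: $P>1$ for $k$ odd and $P<1$ for $k$ even. The other halves are $P<a/(a-1)$ for $k$ odd and $P>1-a^{-1}$ for $k$ even. For squarefree $n$ (where $d_0=1$) these are equivalent to $\log Q<0$, respectively $\log Q>0$. The main factor matches the target bound exactly, so everything rests on the \emph{sign} of the remainder. That is your claim that ``the remaining factors pair up'', and the proposal does not prove it.

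Neither of your routes establishes it:
\begin{itemize}
\item The crude fallback cannot, because it only controls $|\log Q|$. It is also wasteful: only $d=d_0t$ with $t\mid N$, $t\ge 2$, occur, so $d\ge 2d_0$.
\item The induction does not either. With $N=N'p$ you get $P_N(a)=P_{N'}(a^p)/P_{N'}(a)$, and for $k$ odd you need $P_{N'}(a^p)>P_{N'}(a)$. The inductive bounds only give $P_{N'}(a^p)>1-a^{-p}$ and $P_{N'}(a)<1$, which do not combine.
\item Monotonicity is a genuinely new statement, and it is false for real $a$. For example, $P_{210}(a)=\Phi_{210}(1/a)$ decreases near $a=2$, since $\frac{d}{dy}\log\Phi_{210}(y)\approx 0.41>0$ at $y=1/2$.
\end{itemize}
So your only complete route is the citation, which is exactly what the paper does: it proves the lemma by quoting Hering's Theorem~3.6.

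\textbf{How to close the gap.} If you want a self-contained proof, isolate the second-order term. For squarefree $N$, expand the logarithms and use $\sum_{d\mid g}d\,\mu(N/d)=\mu(N/g)\varphi(g)$ for $g\mid N$. This gives
\[
\log\Phi_N(x)=-\sum_{m\ge 1}\frac{x^m}{m}\,\mu\Bigl(\frac{N}{(N,m)}\Bigr)\varphi\bigl((N,m)\bigr).
\]
The terms with $(m,N)=1$ sum exactly to $\mu(N)\log(1-x)$. Hence $\log Q=\mu(N)\sum_m\frac{x^m}{m}\bigl(1-\mu(g)\varphi(g)\bigr)$, summed over $m$ with $g=(m,N)>1$. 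Three facts about these coefficients finish the job:
\begin{itemize}
\item Each coefficient has absolute value at most $(1+\varphi(g))/m\le g/m\le 1$.
\item The $m=p_1$ term is exactly $x^{p_1}$.
\item A coefficient is negative only if $g$ has at least two prime factors, which forces $m\ge p_1p_2$. There are no such terms when $k=1$.
\end{itemize}
Therefore $\mu(N)\log Q\ge x^{p_1}-2x^{p_1p_2}>0$, and $|\log Q|\le x^2/(1-x)\le x<-\log(1-x)$. These yield all four inequalities for $P$. Finally, $1-x\ge 1-a^{-1}$ and $1/(1-x)\le a/(a-1)$ transfer them back to the stated bounds in terms of $a$.
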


\begin{proof}
This follows from \cite[Theorem 3.6]{74Her}.
\end{proof}

\begin{lemma}\label{l:uneq}
Let $n\geq 3$, $a,b\geq 2$ be integers and $\varepsilon,\tau\in\{+,-\}$. If $a>b$, then $\Phi_n(\varepsilon a)>\Phi_n(\tau b)$ unless $n\in\{3,6\}$, $\varepsilon\tau=-$ and $a=b+1$.
\end{lemma}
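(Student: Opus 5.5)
We have to prove Lemma~\ref{l:uneq}: for $n\geq3$, integers $a>b\geq2$ and signs $\varepsilon,\tau$, we want $\Phi_n(\varepsilon a)>\Phi_n(\tau b)$, except when $n\in\{3,6\}$, $\varepsilon\tau=-$ and $a=b+1$.

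\emph{Reduction to positive arguments.} Since $\Phi_n(-x)=\Phi_{\nu(n)}(x)$ for $n\geq3$ (up to the trivial sign conventions, which are harmless because both sides are positive), the claim is a comparison of $\Phi_n(a)$ or $\Phi_{\nu(n)}(a)$ with $\Phi_n(b)$ or $\Phi_{\nu(n)}(b)$. Moreover $\varphi(\nu(n))=\varphi(n)$, because $\nu$ only multiplies or divides $n$ by $2$ when $n$ has an odd part of the appropriate shape. Hence in every case we compare $\Phi_m(a)$ with $\Phi_{m'}(b)$, where $m,m'\in\{n,\nu(n)\}$ and $\varphi(m)=\varphi(m')=:d\geq2$.

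\emph{Generic case via Lemma~\ref{l:hering}.} Lemma~\ref{l:hering} gives $\Phi_m(a)>(a-1)a^{d-1}$ and $\Phi_{m'}(b)<b^{d+1}/(b-1)$ in all cases. So it suffices to show
\[
(a-1)a^{d-1}(b-1)\geq b^{d+1}, \qquad \text{which follows from} \qquad (b-1)^2\geq b\cdot\tfrac{b^{d-1}}{(b+1)^{d-1}}\cdot\ldots
\]
More simply, since $a\geq b+1$ it is enough to check $b(b+1)^{d-1}(b-1)\geq b^{d+1}$, i.e.\ $(b-1)(b+1)^{d-1}\geq b^d$. For $d\geq4$ this holds for all $b\geq2$: $(1+1/b)^{d-1}\geq 1+(d-1)/b+\binom{d-1}{2}/b^2$, which beats $b/(b-1)=1+1/(b-1)$. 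The bound on the two sides is not simultaneously tight when one index has an even and the other an odd number of prime factors; even if it is, this argument still closes for $d\geq4$. For $a\geq b+2$ with $d=2$, the same inequality becomes $(b+1)(b+2)(b-1)\geq b^3$, which holds for $b\geq2$. Note that $\varphi(m)$ is even for $m\geq3$, so $d\in\{2\}\cup\{d\geq4\}$.

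\emph{Remaining case $d=2$, $a=b+1$.} Here $m,m'\in\{3,4,6\}$, and this is an explicit check. $\Phi_4(x)=x^2+1$ is increasing and $\nu(4)=4$, so for $n=4$ there is nothing more to do. For $n\in\{3,6\}$, if $\varepsilon=\tau$ we compare $\Phi_3$ with $\Phi_3$ (or $\Phi_6$ with $\Phi_6$), and both are increasing on $x\geq2$. If $\varepsilon\tau=-$, this is exactly the excluded case; for instance $\Phi_6(b+1)=b^2+b+1=\Phi_3(b)$, which shows the exception is genuine.

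\emph{Main obstacle.} The main difficulty is making the Lemma~\ref{l:hering} estimate uniform for small $d$ and $b=2$, $a=3$. I would verify $d=4$, $b=2$ by hand: $1\cdot3^3=27\geq16$, so it holds. Beyond that the argument consists of careful case bookkeeping.
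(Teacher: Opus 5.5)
Your argument is correct and is essentially the paper's proof. The bounds $(a-1)a^{d-1}<\Phi_m(a)$ and $\Phi_{m'}(b)<b^{d+1}/(b-1)$ from Lemma~\ref{l:hering} dispose of $d=\varphi(n)\geq 4$ and of $a\geq b+2$, and monotonicity handles $d=2$, $a=b+1$ with $\varepsilon=\tau$. Your explicit reduction via $\Phi_n(-x)=\Phi_{\nu(n)}(x)$ and your separate treatment of $n=4$ with opposite signs fill in two points the paper leaves tacit. You should delete the unfinished display ``which follows from $(b-1)^2\geq\ldots$'', since the inequality $(b-1)(b+1)^{d-1}\geq b^d$ that follows it is what actually carries the argument.
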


\begin{proof}
Let $m=\varphi(n)$. It follows from Lemma \ref{l:hering} that $\Phi_{n}(\varepsilon a)>(a-1)a^{m-1}$ and $\Phi_{n}(\tau b)<b^{m+1}/(b-1)$. If $m\geq 4$, then $$(a-1)a^{m-1}=(a-1)a^2a^{m-3}>b(b+1)^2b^{m-3}>b^{m+1}/(b-1)$$ since $(b+1)^2>b^3/(b-1)$. Similarly, if $a\geq b+2$, then $$(a-1)a^{m-1}\geq (b+1)(b+2)b^{m-2}>b^{m+1}/(b-1).$$ If $\tau=\varepsilon$, then $\Phi_n(\tau a)>\Phi_n(\tau b)$ as $\Phi_n(x)$ decreases on $(-\infty,-1)$ and increases on $(1, \infty)$.
\end{proof}

\begin{lemma}\label{l:tight} Suppose that $q$ and $u$ are coprime prime powers and $q$ is odd.
\begin{enumerate}
     \item Let $n\in\{10, 12,14\}$. If $k_{2n}(q)$ divides $k_{2n}(u)$, then $(u^n+1)/(2,u-1)$ does not divide $(q^n+1)/2$.
    \item Let $7\leq n\leq 17$ and $n$ odd. If $k_n(\varepsilon q)$ divides $k_n(\tau u)$ for some $\varepsilon,\tau\in\{+,-\}$, then $(u^n-\tau)/(4,u-\tau)$ does not divide $(q^n-\varepsilon)/2$.
    \item Let $7\leq n\leq 15$ and $n$ odd. If $k_{2n-2}(q)$ divides $k_n(\tau u)$
   and $k_{n}(\varepsilon q)$ divides $k_{2n-2}(u)$ for some $\varepsilon,\tau\in\{+,-\}$, then $(u^n-\tau)/(4,u-\tau)$ does not divide $(q^{n-1}+1)(q+\varepsilon)/(4,q-\varepsilon)$.
    \end{enumerate}
\end{lemma}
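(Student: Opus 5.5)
The plan is to reduce each item to a comparison of sizes. Divisibility of one number by another will force an upper bound for the divisor in terms of the dividend, and the hypothesis that $k$-parts divide will force lower bounds in the other direction. Throughout I use Lemma~\ref{l:kn} to write $k_m(a)=\Phi_m(a)/c$ with $c\in\{1,r\}$, where $r$ is the largest prime divisor of $m$. I then bound $\Phi_m$ from both sides using Lemma~\ref{l:hering}. The standing assumptions are that $q$ and $u$ are coprime and that $q$ is odd.

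\emph{Item (i).} Let $n\in\{10,12,14\}$, so $2n\in\{20,24,28\}$. Since $2n$ does not divide $r-1$ for the largest prime $r$ of $2n$, except possibly in small cases I will check directly, Lemma~\ref{l:kn} gives $k_{2n}(x)=\Phi_{2n}(x)$ or $\Phi_{2n}(x)/r$.
\begin{itemize}
\item From $k_{2n}(q)\mid k_{2n}(u)$ I get $\Phi_{2n}(q)\le r\,\Phi_{2n}(u)$. By Lemma~\ref{l:uneq}, this essentially forces $u\ge q$, apart from a bounded discrepancy that I will handle.
\item Conversely, suppose $(u^n+1)/(2,u-1)$ divides $(q^n+1)/2$. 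Then $u^n+1\le q^n+1$ up to a factor $2$, so $u\le q$. Since $u\ne q$ by coprimality, in fact $u<q$.
\end{itemize}
Combining the two gives a contradiction once I quantify it: $u<q$ makes $\Phi_{2n}(q)>r\Phi_{2n}(u)$, because $\varphi(2n)\ge 8$ and $(q/u)^{8}$ exceeds $r\le 7$ unless $q=u+1$ with $u$ large. That leftover case is treated by the finer bounds of Lemma~\ref{l:hering}. Coprimality with both odd is impossible, and the equality case is excluded.

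\emph{Item (ii).} Here $n$ is odd with $7\le n\le 17$, so $n\in\{7,9,11,13,15,17\}$, and I run the same scheme with $\Phi_n(\varepsilon q)$ and $\Phi_n(\tau u)$.
\begin{itemize}
\item The hypothesis $k_n(\varepsilon q)\mid k_n(\tau u)$ gives $\Phi_n(\varepsilon q)\le r\Phi_n(\tau u)$, with $\varphi(n)\ge 6$.
\item The divisibility $(u^n-\tau)/(4,u-\tau)\mid(q^n-\varepsilon)/2$ gives $u^n\lesssim 2q^n$, hence $u\le q$. Since $u\ne q$, I get $u<q$ except in tiny cases.
\end{itemize}
Lemma~\ref{l:uneq} then gives $\Phi_n(\varepsilon q)>\Phi_n(\tau u)$ for $n\ge7$, because the exceptions there occur only for $n\in\{3,6\}$. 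I must beat the factor $r\le 17$ coming from Lemma~\ref{l:kn}. Using Lemma~\ref{l:hering}, $\Phi_n(\varepsilon q)/\Phi_n(\tau u)$ is at least roughly $((u+1)/u)^{\varphi(n)}$ times constants, and this is large only when $u$ is small.

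This is the main obstacle. For $q=u+1$ with $u$ large, the size argument alone fails. Two refinements handle it:
\begin{itemize}
\item I will use that the $r$-correction is present only when $(n)_{r'}\mid r-1$. For $n=9,15$ this does not happen, and for $n$ prime it reduces to $r\mid \Phi_1(\pm a)$.
\item I will also use an arithmetic observation. If $k_n(\varepsilon q)\mid k_n(\tau u)$ and the reverse quotient divisibility holds, then the primitive divisors common to both numbers force $n$ to divide the orders of $q$ and $u$ modulo those primes. Combined with the fact that $u^n-\tau$ is nearly all of $q^n-\varepsilon$, this leaves a cofactor that is too small to contain the cyclotomic parts $\Phi_d$ for $d\mid n$, $d<n$. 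Concretely, $(u^n-\tau)$ contains $\Phi_1(\tau u)$, which is coprime to $k_n(\varepsilon q)$ and must divide $(q^n-\varepsilon)/k_n(\varepsilon q)$, a number of size about $q^{n-\varphi(n)}$. I will compare these factors piece by piece.
\end{itemize}

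\emph{Item (iii).} This is analogous. The numbers $k_{2n-2}(q)$ and $k_n(\tau u)$ have $\varphi$-degrees $\varphi(2n-2)$ and $\varphi(n)$ respectively, and the two-way divisibility yields
$$q^{\varphi(2n-2)}\lesssim u^{\varphi(n)}\quad\text{and}\quad q^{\varphi(n)}\lesssim u^{\varphi(2n-2)}.$$
The final divisibility gives $u^n\lesssim q^n$. For each of the finitely many odd $n\in\{7,\dots,15\}$ I will tabulate $\varphi(n)$ and $\varphi(2n-2)$ and check that these inequalities are incompatible, invoking Lemma~\ref{l:hering} for precise constants. If they are compatible, I will again use the coprime-cofactor argument: $\Phi_1(\tau u)\Phi_d(\tau u)$ for proper divisors $d$ of $n$ must divide the small complementary factor of $(q^{n-1}+1)(q+\varepsilon)$.
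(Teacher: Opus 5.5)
Your plan for (iii), and for (i) with $n=12,14$ (where $k_{2n}=\Phi_{2n}$), matches the paper. In (iii), since $\varphi(n)\neq\varphi(2n-2)$, one of the two hypothesised divisibilities makes $u$ polynomially larger than $q$, which contradicts $u^n<3q^n$.

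For (i) with $n=10$ and for all of (ii), however, there is a genuine gap. Your only weapon against the correction factor of Lemma~\ref{l:kn} is size. Every inequality you draw from the second divisibility says that $u$ is at most about $q$, and that cannot beat a fixed factor when $q/u$ is close to $1$.
\begin{itemize}
\item In (i), $(q/u)^8>5$ fails whenever $q<5^{1/8}u$, not only when $q=u+1$. For example $q=103$, $u=101$, where indeed $5\mid q^2+1$.
\item Lemma~\ref{l:hering} pins down $\Phi_m$ only up to bounded factors, so no ``finer bound'' from it can help.
\end{itemize}
In (ii) the situation is worse:
\begin{itemize}
\item The bound $u^n\le 2q^n+3$ does not give $u\le q$; it allows $u$ up to about $2^{1/n}q$. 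So the case $u>q$ is not handled at all.
\item Your claim that $n=9$ has no correction is false: $(9)_{3'}=1$ divides $3-1$, so $k_9(a)=\Phi_9(a)/(3,a-1)$.
\item Your cofactor comparison again only bounds $u$ above by a multiple of $q$. For prime $n$ that cofactor has size about $q$, and $u-\tau$ fits into it, so no contradiction arises.
\end{itemize}

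What is missing is a bound forcing $u$ to be substantially \emph{larger} than $q$, together with $r$-adic valuations.

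In (ii), take $n$ prime and suppose $(n,q-\varepsilon)\le(n,u-\tau)$. Then $\Phi_n(\varepsilon q)$ divides $\Phi_n(\tau u)$, and the two are distinct by Lemma~\ref{l:uneq}. Every prime divisor of $\Phi_n(\tau u)$ is at least $n$, so the quotient is at least $n$. This gives $q^{n-1}<3u^{n-1}/n$, which is incompatible with $u^n\le 2q^n+3$. The paper handles the other values of $n$ as follows:
\begin{itemize}
\item For $n=15$ the same quotient trick works with the bound $31$.
\item For $n=9$ the paper imports $3q^6<u^6$ from a lemma of Staroletov.
\end{itemize}
If instead $n\mid q-\varepsilon$ but $n\nmid u-\tau$, Lemma~\ref{l:r-part}(iii) gives $(q^n-\varepsilon)_n\ge n^2$ while $(u^n-\tau)_n=1$. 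Hence $n^2u^n<2q^n+n^2+2$. Combined with $q\ge 2n-1$ and the bounds of Lemma~\ref{l:hering}, this forces $u^n<40$.

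Likewise in (i) for $n=10$, the problematic case is $5\mid q^2+1$ and $5\nmid u^2+1$. There you need $25\mid q^{10}+1$ and $5\nmid u^{10}+1$. These upgrade the second divisibility to $25u^{10}<q^{10}$, which is incompatible with $q^8<45u^8/8$ coming from $k_{20}(q)\mid k_{20}(u)$.

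Without these valuation arguments and the ``quotient is at least $n$'' step, the near-equal case $q\approx u$ remains open.
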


\begin{proof}

(i) Assume the opposite. Then $u<q$. If $n=12$ or $14$, then $k_{2n}(a)=\Phi_{2n}(a)$ for every integer $a\geq 2$ by Lemma \ref{l:kn}, and so $k_{2n}(u)<k_{2n}(q)$ by Lemma \ref{l:uneq}; a contradiction.

Let  $n=10$. Then $k_{2n}(q)=\Phi_{20}(q)/(5,q^2+1)$ and $k_{2n}(u)=\Phi_{20}(u)/(5,u^2+1)$. If $(5,q^2+1)\leq (5, u^2+1)$, then again $k_{2n}(u)<k_{2n}(q)$. Therefore, we can assume that $(5,q^2+1)=5$ and $(5,u^2+1)=1$. Then $(q^{10}+1)_5\geq25$ and $(u^{10}+1)_5=1$ by Lemma \ref{l:r-part}, and therefore $25(u^{10}+1)\leq q^{10}+1$, whence $25u^{10}<q^{10}$. Noting that $\Phi_{20}(x)=\Phi_{10}(x^2)$ and applying Lemma \ref{l:hering}, we see that $k_{20}(q)>8q^8/45$. Also it clear that $\Phi_{10}(x^2)<x^8$ and so $k_{20}(u)<u^8$. Hence $q^8<45u^8/8$. It follows that $25^4u^{40}<q^{40}<(45/8)^5u^{40}$, which yields $8^5\cdot 5^3<9^5$, a contradiction.

(ii) Suppose that $(u^n-\tau)/(4,u-\tau)$ divides $(q^n-\varepsilon)/2$. Then  $(u^n-1)/4\leq (q^n+1)/2$, so \begin{equation}\label{e:1} u^n\leq 2q^n+3.\end{equation}
Denote $m=\varphi(n)$. By Lemma \ref{l:hering}, we have $\Phi_n(\varepsilon q)>(q-1)q^{m-1}\geq 2q^m/3$ and $\Phi_n(\tau u)<u^{m+1}/(u-1)<2u^m$. Also note that prime factors of $k_n(\varepsilon q)$ are at least $2n+1$.

If $n=9$, then $3q^6<u^6$ by \cite[Lemma 2.5(iii)]{17Sta}, which yields $3q^9<u^9$, contradicting \eqref{e:1}. Let $n=15$. Since $k_{15}(a)=\Phi_{15}(a)$ and
$\Phi_{15}(\varepsilon q)\neq \Phi_{15}(\tau u)$ by Lemma \ref{l:uneq}, it follows that  $\Phi_{15}(\varepsilon q)\leq \Phi_{15}(\tau u)/31$. Then $2q^8/3<2u^8/31$ and therefore $10q^{15}<u^{15}$, a contradiction with~\eqref{e:1}.

Thus $n$ is prime and so $k_n(a)=\Phi_n(a)/(n,a-1)$. Suppose that  $(n,q-\varepsilon)\leq (n,u-\tau)$. Then $\Phi_n(\varepsilon q)$ divides $\Phi_n(\tau u)$. Lemma~\ref{l:uneq} implies that $\Phi_n(\varepsilon q)\neq\Phi_n(\tau u)$. Since prime factors of $\Phi_n(\tau u)$ are at least $n$, we have that $q^{n-1}<3u^{n-1}/n$ by Lemma~\ref{l:hering}. A direct calculation shows that $ (3/n)^{n/(n-1)}\leq (3/7)^{(7/6)}<2/5$ and so $u^n<2(3/n)^{n/(n-1)}u^n+3<4u^n/5+3$, whence $u^n<15$, a contradiction.

Let $(n,q-\varepsilon)=n$ and $(n,u-\tau)=1$. In this case $(q^n-\varepsilon)_n\geq n^2$ and $(u^n-\tau)_n=1$, so $n^2(u^n-\tau)/(4,u-\tau)$  divides $(q^n-\varepsilon)/2$, which yields $n^2u^n<2q^n+n^2+2$. On the other hand, $q\geq 2n-1$ and hence $k_n(\varepsilon q)>\frac{2n-2}{n(2n-1)}q^{n-1}$ by Lemma~\ref{l:hering}. Since $k_n(\tau u)<2u^{n-1}$, we get that $q^{n-1}<\alpha u^{n-1}$ for $\alpha=\frac{n(2n-1)}{n-1}$ and so $n^2u^{n}<2\alpha^{n/(n-1)}u^n+n^2+2$. It follows that $u^n<\frac{n^2+2}{n^2-2\alpha^{n/(n-1)}}<40$, where the last inequality is directly verified for $n=7,11,13,17$.

(iii) Again we assume the opposite. Then $u^n<2(q^{n-1}+1)(q+1)+1<3q^n$. In particular, if $q=3$, then $u=2$.

For every $n$, let $i$, $j$ and $d$ be as in Table \ref{tab:ij}. Then $\{n,2n-2\}=\{i,j\}$, $\varphi(i)>\varphi(j)$ and $k_i(\varepsilon q)\geq \Phi_i(\varepsilon q)/d$ in view of~(\ref{eq:ki}).
\begin{table}[!hbt]
   \centering
   \caption{}
   \label{tab:ij}
$\begin{array}{|c|c|c|c|c|}
\hline
n&i&j&d&{\varphi(i)-\varphi(j)}\\
\hline
7&7&12&(7,q^2-1)&2\\
9&16&9&2&2\\
11&11&20&(11,q^2-1)&2\\
13&13&24&(13,q^2-1)&4\\
15&28&15&1&4\\
\hline
\end{array}$
\end{table}

Suppose that $u\geq 7$. Then $q\geq 5$ and so $4q^{\varphi(i)}/5d<7u^{\varphi(j)}/6$ by Lemma~\ref{l:hering}. Together with  $u^n<3q^n$, this yields $$u^{\varphi(i)}<3^{\varphi(i)/n}q^{\varphi(i)}<3\cdot (35d/24)u^{\varphi(j)}<(35d/8)u^{\varphi(j)}.$$ Taking $\varphi(i)-\varphi(j)$ and $d$ from Table \ref{tab:ij}, we derive a contradiction since $35\cdot 11/8<7^2$ and $35\cdot 13/8<7^4$.

Suppose that $u\leq 5$. We noted above that if $q=3$, then $u=2$,
so since $u\neq q$ we have the inequality $u<q$. Since $2q^{\varphi(i)}/3d<2u^{\varphi(j)}$, we get that $q^{\varphi(i)}<3dq^{\varphi(j)}$. According to the table, this implies that $q\leq 5$. Then $d\leq 2$ and $q^2<6$, a contradiction.
\end{proof}

\subsection{Element orders and prime graphs of simple classical groups}

As was mentioned above, we denote simple classical groups according to the \emph{Atlas of Finite Groups}~\cite{85Atlas}. Along with this notation, we write $L_n^+(q)$ for $L_n(q)$ and $L_n^-(q)$ for $U_n(q)$. If $L$ is equal to $L_n^\pm(q)$, $S_{2n}(q)$, $O_{2n+1}(q)$, or $O_{2n}^\pm(q)$, then we say that $L$ is \emph{a group over a field of order $q$} (despite the fact that $U_n(q)$ originates from a subgroup of $SL_n(q^2)$) and refer to $n$ as $\prk(L)$. In other words, $\prk(L)$ is the dimension of $L$ if $L$ is linear or unitary and the Lie rank of $L$ otherwise.

Let $G$ be a finite group. The set $\omega(G)$ is uniquely determined by the subset $\mu(G)$ consisting of element orders that are maximal under divisibility relations.
Our standard references for the spectra of simple classical groups are \cite{08But.t} and \cite{10But.t} (with corrections from \cite[Lemma 2.3]{16Gr.t}).

\begin{lemma}{\em\cite[Lemma~1.3]{09VasGrMaz1.t}}\label{l:orders}
Suppose that $L$ is a simple classical group of Lie rank $l$ over a field of order $q$. Then element orders of $L$ do not exceed $\frac{q}{q-1}q^l$.
\end{lemma}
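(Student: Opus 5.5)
We need a plan for proving Lemma~\ref{l:orders}, the bound on element orders of a simple classical group of Lie rank $l$ over $\mathbb{F}_q$.

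The approach is to work in the natural matrix group. Let $L$ be the image of a classical matrix group $\widehat L\le GL_m(\bar q)$, where $\bar q=q$ or $q^2$ in the unitary case. Every element $g\in\widehat L$ has a Jordan decomposition $g=su$ with $s$ semisimple, $u$ unipotent and $su=us$. Then $|g|=|s|\cdot|u|$, and the order of $\bar g$ in $L$ divides $|g|$. So it suffices to bound $|s|\cdot|u|$ for elements of the covering matrix group, or better, of the corresponding simply connected or adjoint algebraic group.

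The cleanest route uses the connected reductive centralizer. Let $G=\mathbf G^F$ be the finite group of Lie type of rank $l$ covering $L$. The semisimple part $s$ lies in a maximal torus $T$ of $C=C_{\mathbf G}(s)^{\circ F}$ (up to the component group, which only affects whether $u$ lies in $C$). The unipotent part $u$ lies in $C$ (or in the slightly larger $C_{\mathbf G}(s)^F$), which is a reductive group of semisimple rank at most $l$ over extension fields.

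The proof then splits into two bounds.

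\textbf{Bound on $|s|$.} The order of $s$ is at most the exponent of $T$. Maximal tori of a rank-$l$ group are products $\prod_i (q^{n_i}-\epsilon_i)$ with $\sum n_i\le l$, so the exponent is at most $\prod(q^{n_i}+1)$ over the cyclic factors.

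\textbf{Bound on $|u|$.} The unipotent part $u$ has order $p^{a}$ where $p^{a-1}<h$, with $h$ the Coxeter-type bound coming from the Jordan block sizes in the centralizer's natural representation.

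Combining the two, in the classical matrix language the element orders are lcm's of numbers of the form $p^{t}(q^{n_1}\pm1)\cdots$ with block-size constraints. This is exactly Buturlakin's description of $\mu(L)$ in \cite{08But.t,10But.t}. I would therefore invoke that description directly and check each maximal-order family. A typical family is $[q^{n_1}\pm1,\dots,q^{n_k}\pm1]$ with $\sum n_i\le l$. Its value is at most $\prod (q^{n_i}+1)$, and
$$\prod_i(q^{n_i}+1)\le q^{l}\prod_i(1+q^{-n_i})\le q^l\prod_{j\ge1}(1+q^{-j})\le \frac{q}{q-1}q^l.$$
For the last step one uses that the $n_i$ can be taken distinct when maximizing an lcm, since repeated equal factors do not enlarge the lcm, together with the estimate $\prod_{j\ge1}(1+q^{-j})\le \prod_{j\ge1}(1-q^{-j})^{-1}\cdots$. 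A cleaner route is the inequality $\prod_{j\ge1}(1+x^j)\le 1/(1-x)$ for $0<x<1$, which holds because $(1+x^j)\le (1-x^{2j})/(1-x^j)$ and the product telescopes to $\prod(1-x^{2j})/\prod(1-x^j)=\prod_{j\text{ odd}}(1-x^j)^{-1}$. I need to be careful here, since this is only $\le 1/(1-x)$ up to higher terms. A safer approach is to exploit that in an lcm the factors $q^{n_i}-1$ and $q^{n_j}\pm1$ share the factor $(q-1)$ or $2$: dividing by $\gcd$'s gives $\prod(q^{n_i}+1)/2^{k-1}\le q^l\cdot q/(q-1)$.

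\textbf{Mixed families.} The remaining families involve a unipotent factor $p^{t}$ times a torus part in a smaller rank, for instance $p^t[q^{n_1}\pm1,\dots]$ with $p^{t-1}+1\le$ (block size) and $\sum n_i\le l-(\text{block size})/2$ or similar. For these, $p^t\le q^{\,\text{block}/2}$ roughly, using $p^{t}\le 2(p^{t-1}+1)-1$ or similar, and the same product estimate applies to the remaining rank.

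\textbf{Main obstacle.} The hard step is the bookkeeping across the four classical types and the small-$q$ cases, especially $q=2,3$, where the factor $q/(q-1)$ is tight and the gcd savings must be used. I would handle these by tracking the gcd of the cyclotomic-type factors and checking the few tight cases, such as the regular torus $q^l-1$ or the factor $(q^l+1)/\gcd$, directly.
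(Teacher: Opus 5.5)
\textbf{Verdict: the plan has genuine gaps.} The paper does not prove this lemma; it quotes it from \cite[Lemma~1.3]{09VasGrMaz1.t}. So your plan has to stand on its own, and its central estimate is false. By Euler's identity, $\prod_{j\ge1}(1+x^j)=\prod_{j\ \mathrm{odd}}(1-x^j)^{-1}>(1-x)^{-1}$ for $0<x<1$; at $x=1/2$ the product is about $2.38$. Hence the chain $\prod_i(1+q^{-n_i})\le\prod_{j\ge1}(1+q^{-j})\le q/(q-1)$ breaks at its last step. Your ``up to higher terms'' caveat has the inequality backwards, and distinctness of the $n_i$ cannot rescue it.

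Your fallback, dividing by $2^{k-1}$ because the factors share $2$ or $q-1$, is sound for odd $q$ but not for even $q$. For even $q$ the numbers $q^{2^k}+1$ are pairwise coprime. With $l=2^{K+1}-1$, the group $S_{2l}(q)\ge Sp_2(q)\times Sp_4(q)\times\dots\times Sp_{2^{K+1}}(q)$ therefore has an element of order $\prod_{k=0}^{K}(q^{2^k}+1)=(q^{l+1}-1)/(q-1)$; for example $255\in\omega(S_{14}(2))$ against the bound $256$. So the lemma is sharp up to an additive $1/(q-1)$, and no argument that loses a constant factor can work.

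The idea you are missing is to sort the factors $q^{n_i}+1$ by the $2$-part of $n_i$. If $(n_i)_2=2^k$, then $q^{2^k}+1$ divides $q^{n_i}+1$, so the lcm of one such class is at most $(q^{2^k}+1)\prod_i q^{n_i-2^k}\le q^{\sum n_i}(1+q^{-2^k})$. The factors $q^{n}-1$ contribute less than $q^{n}$. Finally $\prod_{k\ge0}(1+x^{2^k})=(1-x)^{-1}$ gives $|s|<q^l\cdot q/(q-1)$ for the tori of types $B$, $C$, $D$, for every $q$.

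Two further steps fail as stated. First, you cannot bound element orders in the matrix cover and then descend. In $SL_2(q)$ with $q=p$ odd, $-u$ has order $2q>q^2/(q-1)$. In $Sp_4(3)$, with $u$ regular unipotent, $-u$ has order $18>\frac{3}{2}\cdot 3^2$. Their images in $L_2(q)$ and $S_4(3)$ have orders $p$ and $9$. So the central part of $s$ must be discarded (or you work in the adjoint group).

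This matters precisely in the mixed case, where you only give estimates ``roughly'' or ``or similar''. Those cannot suffice: $p(q+1)\in\omega(S_4(q))$, which for $q=p$ falls short of the bound by only $1+1/(q-1)$. One accounting that works:
\begin{itemize}
\item $C(s)$ is a product of groups $GL_a(q^b)$, $GU_a(q^b)$ and classical groups on the $\pm1$-eigenspaces, and $s$ is scalar on each factor.
\item A unipotent element of order $p^t$ in a factor of degree $a$ satisfies $p^{t-1}\le a-1$, so $p^t\le p(a-1)\le q^{a-1}$.
\item This is paid for by the rank $(a-1)b$ that the scalar does not use.
\item The $\pm1$-eigenspace factors need a short separate check modulo the centre.
\end{itemize}

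Second, your description of tori is wrong in types $A$ and ${}^2A$. There $\sum n_i=l+1$, not $\le l$, and the denominator comes from the determinant condition: all $q^{n_i}-(\pm1)^{n_i}$ share the factor $q\mp1$. The Singer-type element of order $(q^{l+1}-1)/((q-1)(l+1,q-1))$ is exactly where $q/(q-1)$ comes from in $L_{l+1}(q)$.
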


As usual, $\pi(G)$ is the set of all prime divisors of the order of $G$. The \emph{prime graph} $GK(G)$ of $G$ is the graph with vertex set $\pi(G)$ in which vertices $r$ and $s$ are adjacent if and only if $r\neq s$ and $rs\in\omega(G)$. A set $\rho\subseteq\pi(G)$ is a \emph{coclique} if the vertices of $\rho$ are pairwise nonadjacent.  We refer to a coclique in $GK(G)$ containing $r\in\pi(G)$ as an \emph{$\{r\}$-coclique}. We denote by $t(G)$ the greatest size of a coclique in $GK(G)$ and by $t(r,G)$ the greatest size of $\{r\}$-cocliques in $GK(G)$. A prime $r\in\pi(G)$ is called \emph{large} with respect to $G$ if $t(r,G)=t(G)$. Our standard references for adjacency in the prime graphs of simple groups are \cite{05VasVd.t} and \cite{11VasVd.t}.

In the remaining part of this subsection, $L$ is a classical group over a field of order $q$ and characteristic $p$. Of most importance for our purposes, are cocliques,  $\{2\}$-cocliques and $\{p\}$-cocliques of $GK(L)$ having maximal possible size. To deal adjacency in $GK(L)$,  we will use the function $\nu$ defined in  \eqref{e:nu} and the following functions $\eta$ and $\varphi(r,L)$ introduced in \cite{05VasVd.t} and \cite{15Vas}, respectively. If $k$ is a positive integer, then
$$\eta(k)=\begin{cases} k& \text{if $k$ is odd}\\
                       k/2& \text{if $k$ is even}\\
\end{cases}.$$

If $r\in\pi(L)\setminus\{p\}$, then $$\varphi(r,L)=\begin{cases} e(r,q) &\text{if } L=L_n(q),\\
                             \nu(e(r,q)) &\text{if } L=U_n(q),\\
                             \eta(e(r,q))& \text{otherwise}.
\end{cases}$$

\begin{lemma}\label{l:adj_criteria}
Let $\prk L=n\geq 4$ and  $r$, $s\in\pi(L)\setminus\{2,p\}$.
Put $k=e(r, q)$, $l=e(s, q)$ and suppose that $\varphi(r,L)\leq\varphi(s,L)$. Then the following statements hold.
\begin{enumerate}
\item  If $L=L^\varepsilon_n(q)$ and $\varphi(r,L)\geq 2$, then $r$ and $s$ are adjacent in $GK(L)$ if and only if $\varphi(r,L)+\varphi(s,L)\leq n$ or $\varphi(r,L)$ divides $\varphi(s,L)$.
\item  If $L\in\{O_{2n+1}(q), S_{2n}(q)\}$, then $r$ and $s$ are adjacent in $GK(L)$ if and only if $\varphi(r,L)+\varphi(s,L)\leq n$ or $\frac{l}{k}$ is an odd integer.
\item If $L=O^\varepsilon_{2n}(q)$, then $r$ and $s$ are adjacent in $GK(L)$ if and only if $2\varphi(r,L)+2\varphi(s,L)\leq 2n-(1-\varepsilon(-1)^{k+l})$, or $\frac{l}{k}$ is an odd integer, or $\varepsilon = +$, $n$ is even, $s\in R_n(q)$, $r\in R_{n/2}(q)$.
\end{enumerate}
\end{lemma}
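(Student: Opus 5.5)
This adjacency criterion can be read off from the known description of element orders, together with the standard adjacency criteria for the prime graphs of simple classical groups in \cite{05VasVd.t} (Propositions~4.1--4.2 there, and the orthogonal case in \cite{11VasVd.t}). The plan is to translate those criteria into the language of $\varphi(r,L)$, $\nu$ and $\eta$ and to check that the two formulations agree for odd $r,s\neq p$.

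First, fix notation. Since $r$ and $s$ are odd and coprime to $q$, an element of order $rs$ can be taken semisimple. For $L=L_n^\varepsilon(q)$, semisimple elements live in maximal tori, which correspond to partitions of $n$ into cyclic factors $(\varepsilon q)^{n_i}-1$ (up to the centre and determinant issues). Then $r$ divides the order of a torus factor $(\varepsilon q)^{m}-1$ if and only if $\varphi(r,L)=e(r,\varepsilon q)$ divides $m$; this uses $e(r,-q)=\nu(e(r,q))$. So $rs\in\omega(L)$ if either one torus block of size $m\le n$ is divisible by both $\varphi(r,L)$ and $\varphi(s,L)$, or two disjoint blocks of sizes $\varphi(r,L)$ and $\varphi(s,L)$ fit. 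The first option gives $[\varphi(r,L),\varphi(s,L)]\le n$. Passing from $GL/GU$ to the simple group is harmless, since $r,s$ are odd and the quotient by scalars and determinant only affects primes dividing $q-\varepsilon$. That is exactly the case $\varphi=1$, which is excluded by the hypothesis $\varphi(r,L)\ge2$. Then:
\begin{itemize}
\item If $\varphi(r,L)+\varphi(s,L)>n$, then $[\varphi(r,L),\varphi(s,L)]\le n$ forces $\varphi(r,L)\mid\varphi(s,L)$, since otherwise the lcm is at least $2\varphi(s,L)>n$.
\item Conversely, divisibility together with $\varphi(s,L)\le n$ makes a single block work.
\end{itemize}
This proves (i).

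For (ii) and (iii), use that the tori of $Sp_{2n}$ and $SO_{2n+1}$ are products of factors $q^{m_i}\pm1$ with $\sum m_i\le n$. The prime $r$ divides $q^m-1$ if and only if $k\mid m$, and divides $q^m+1$ if and only if $k$ is even and $m/(k/2)$ is odd. Hence the minimal block size carrying $r$ is $\eta(k)=\varphi(r,L)$. Two separate blocks account for $\varphi(r,L)+\varphi(s,L)\le n$. A single block $q^m\pm1$ divisible by both $r$ and $s$ with $m\le n$ requires compatibility of the signs: both must divide $q^m-1$ or both $q^m+1$. A case check on the parities of $k$ and $l$ shows that, when the sum condition fails, this happens within $m\le n$ exactly when $l/k$ is an odd integer. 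In that case $m=\eta(l)$ works and lies in the correct sign class.

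In (iii), the torus factors of $O^\varepsilon_{2n}$ must in addition have total sign $\varepsilon$, meaning an even number of $+$-type factors for $\varepsilon=-$. This parity constraint produces the term $1-\varepsilon(-1)^{k+l}$: when both blocks are of the same kind and the global sign mismatches, one extra rank is lost. The exceptional case $\varepsilon=+$, $n$ even, $s\in R_n(q)$, $r\in R_{n/2}(q)$ comes from the torus $(q^{n/2}+1)^2$.

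The main obstacle is this bookkeeping of signs in (iii), including the parity-of-$k+l$ correction and the exceptional torus. I would not redo it from scratch. Instead I would cite \cite[Propositions~2.1--2.2, 4.1--4.2]{05VasVd.t} and \cite{11VasVd.t}, and merely verify that their conditions, stated via $e(r,q)$ and $\eta$, coincide with the conditions stated here.
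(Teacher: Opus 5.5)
Your overall route matches the paper's. There the lemma is proved purely by citation: \cite[Propositions 2.1 and 2.2]{05VasVd.t} for (i), and \cite[Propositions 2.4 and 2.5]{11VasVd.t} for (ii) and (iii). Your torus sketches for (i) and (ii) are correct, including the observation that $\varphi(r,L)\geq 2$ is exactly what makes the passage from $GL^\varepsilon_n(q)$ to $L^\varepsilon_n(q)$ harmless. However, the mechanism you describe for (iii) is wrong in two places. Since you plan to check by hand that the cited criteria match the stated ones, you should fix it.

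First, the parity rule. A maximal torus of $SO^\varepsilon_{2n}(q)$ has the form $\prod_i C_{q^{\lambda_i}-1}\times\prod_j C_{q^{\mu_j}+1}$ with $\sum_i\lambda_i+\sum_j\mu_j=n$. The number of factors $q^{\mu_j}+1$ is even if $\varepsilon=+$ and odd if $\varepsilon=-$. Your ``even number of $+$-type factors for $\varepsilon=-$'' is wrong under either reading of ``$+$-type''. The minimal block carrying $r$ is $q^{\eta(k)}+(-1)^k$, which is of the form $q^{\mu}+1$ iff $k$ is even. So two disjoint minimal blocks have total sign $(-1)^{k+l}$, and one extra unit of rank is needed exactly when $(-1)^{k+l}\neq\varepsilon$. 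That covers blocks of different kinds with $\varepsilon=+$, not only blocks of the same kind with $\varepsilon=-$ as you state.

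Second, the exceptional adjacency does not come from the torus $(q^{n/2}+1)^2$. An odd prime $r\in R_{n/2}(q)$ divides $q^{n/2}-1$, hence is coprime to $q^{n/2}+1$, so that torus contains no element of order $r$. The element of order $rs$ lies in the cyclic torus of order $q^n-1$, e.g.\ a Singer cycle of the Levi subgroup $GL_n(q)$ of $SO^+_{2n}(q)$. This torus has sign $+$ and is divisible by both $r_{n/2}(q)$ and $r_n(q)$. In your single-block analysis it is the option $m=2\eta(l)\leq n$. When the sum condition fails, this forces $\eta(k)=\eta(l)=n/2$ and $m=n$. It is a genuinely new case only for $n\equiv 2\pmod 4$, where the two-block torus $(q^{n/2}-1)(q^{n/2}+1)$ has the wrong sign for $\varepsilon=+$.
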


\begin{proof}
This follows from \cite[Propositions 2.1 and 2.2]{05VasVd.t} and \cite[Propositions 2.4 and 2.5]{11VasVd.t}.
\end{proof}

\begin{lemma}\label{l:half}
Let $\prk(L)\geq 4$,  $r,s\in\pi(L)\setminus\{p\}$ and $r\neq s$. If $\varphi(r,L)$, $\varphi(s,L)\leq n/2$, then $rs\in\omega(L)$.
\end{lemma}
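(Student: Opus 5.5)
The plan is to reduce the claim, as far as possible, to the adjacency criteria of Lemma~\ref{l:adj_criteria}. For the few cases that lemma does not cover, I would exhibit an explicit element of order $rs$ inside a suitable subgroup of $L$: a maximal torus or a Levi subgroup. Throughout, put $n=\prk(L)$, $k=e(r,q)$, $l=e(s,q)$, and assume without loss of generality that $\varphi(r,L)\leq\varphi(s,L)$.

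\emph{Step 1: $r,s$ odd.} In the linear and unitary case with $\varphi(r,L)\geq2$, we have $\varphi(r,L)+\varphi(s,L)\leq n$, so Lemma~\ref{l:adj_criteria}(i) applies directly. The symplectic and odd-dimensional orthogonal cases follow in the same way from Lemma~\ref{l:adj_criteria}(ii).

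For $L=O^\varepsilon_{2n}(q)$, the inequality $2\varphi(r,L)+2\varphi(s,L)\leq 2n-(1-\varepsilon(-1)^{k+l})$ can fail only when three things hold at once: $\varphi(r,L)=\varphi(s,L)=n/2=:m$, $k+l$ is odd, and $\varepsilon=+$. Since $\eta(k)=\eta(l)=m$, we have $k,l\in\{m,2m\}$. If $k=l$, then $l/k=1$ is odd and $r,s$ are adjacent. If $k\neq l$, then $m$ is odd, and after renaming $k=m=n/2$ and $l=2m=n$. This is precisely the third alternative of Lemma~\ref{l:adj_criteria}(iii), since $s\in R_n(q)$ and $r\in R_{n/2}(q)$. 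As a sanity check, a Levi subgroup $GL_n(q)$ of $\Omega^+_{2n}(q)$ contains a cyclic torus of order $q^n-1$, which is divisible by $rs$.

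\emph{Step 2: linear and unitary groups with $\varphi(r,L)=1$.} Here $r$ divides $q-\varepsilon$ (and $\varphi(s,L)=j\leq n/2$). I would argue directly. Inside $GL^\varepsilon_n(q)$, take a block-diagonal element $\operatorname{diag}(x,y,z)$, where $x\in GL^\varepsilon_{j}(q)$ has order divisible by $s$ (it comes from a cyclic torus of order $q^{j}-\varepsilon^{j}$) and $y$ is a scalar of order $r$ in a $GL^\varepsilon_1$ block. Because $j\leq n/2$ and $n\geq4$, at least one further coordinate remains free. The free block $z$ is then used to make the determinant equal to $1$ while keeping the $r$- and $s$-parts of the order nontrivial modulo scalars. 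The only care needed is to check that the image in $L$ still has order divisible by $rs$. This holds because the element has at least two distinct eigenvalue patterns, so it is not scalar, and both the $r$-part and the $s$-part survive modulo the centre.

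\emph{Step 3: one of the primes equals $2$, say $r=2$.} Then $\varphi(r,L)=1$ (or $\nu(1),\nu(2)$ in the unitary case). I would embed $L$'s natural module as an orthogonal sum of a piece of dimension $2\varphi(s,L)$ (or $\varphi(s,L)$ for linear and unitary groups) and its complement. The first piece carries a torus element of order $s$, and the complement carries an involution; the complement is nondegenerate of dimension at least $n$, respectively at least $2$. Alternatively, one can cite the criterion for adjacency of $2$ in \cite{05VasVd.t}, whose only obstructions involve $\varphi(s,L)$ close to $n$, which is excluded by the assumption $\varphi(s,L)\leq n/2$.

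I expect the main obstacle to be Step~3 together with the determinant and spinor-norm bookkeeping in Step~2. Ensuring that the constructed element actually lies in the simple group rather than in the full isometry group, and that it keeps order divisible by $rs$ after factoring out the centre, is where the case analysis in $\varepsilon$, in the parity of $q$, and in the type of the complement has to be done carefully. My first attempt would be to put the involution inside a subgroup $\Omega$ or $SL$ of the complement of dimension at least $2$. This is available because $n\geq4$ and the $s$-part uses at most half of the space.
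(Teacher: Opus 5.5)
Your argument is correct, but it takes a different route from the paper, which gives no proof of its own and simply quotes \cite[Lemma~2.4(ii)]{15Vas}.

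\textbf{How your route differs.} For odd $r,s$ you deduce the statement from the adjacency criteria of Lemma~\ref{l:adj_criteria}. There the bound $\varphi(r,L)+\varphi(s,L)\le n$ does all the work except for one boundary configuration: $\{k,l\}=\{n/2,n\}$ with $n/2$ odd in $O^+_{2n}(q)$, which is exactly the third alternative of Lemma~\ref{l:adj_criteria}(iii). The two cases the criteria do not reach are $r\mid q-\varepsilon$ in $L^\varepsilon_n(q)$ and $r=2$. You cover them with explicit block-diagonal elements: a cyclic-torus element on a subspace of dimension $\varphi(s,L)$ (or $2\varphi(s,L)$), and an element of order $r$ on a disjoint block. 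What this buys is a self-contained derivation from facts already stated in Section~\ref{s:pre}, at the cost of routine determinant and spinor-norm checks. The citation is shorter but hides exactly these checks.

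\textbf{Details to tighten.}

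In the $O^\varepsilon_{2n}(q)$ case, your claim that the inequality fails only for $\varepsilon=+$ is not quite right. It also fails when $\varepsilon=-$, $\varphi(r,L)=\varphi(s,L)=n/2$ and $k+l$ is even. This is harmless: $k\neq l$ forces $\{k,l\}=\{m,2m\}$ with $m$ odd, so $k+l=3m$ is odd. Hence $k=l$ in that case, and your $l/k=1$ subcase covers it.

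For the passage to the simple group, the clean test is this: if $g^{rs}=1$ and neither $g^r$ nor $g^s$ is scalar, then the image of $g$ has order $rs$. For your $g=\operatorname{diag}(x,\lambda,z)$, take $x$ of order exactly $s$ and $z=\operatorname{diag}(\mu,1,\dots,1)$ with $\mu=(\det x\cdot\lambda)^{-1}$. The test is then immediate: $g^s$ has both $1$ and $\lambda^s\neq 1$ on the diagonal, and $g^r$ is trivial in the $\lambda$-slot but not on the $x$-block.

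In the linear or unitary case of Step~3, the $s$-element needs care with the determinant. When $\varphi(s,L)\ge 2$, its determinant is automatically $1$, since $s\nmid q-\varepsilon$. When $\varphi(s,L)=1$, it needs a second coordinate, e.g.\ $\operatorname{diag}(\alpha,\alpha^{-1})$; this still fits because $n\ge 4$.

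In the orthogonal case of Step~3, a complement of dimension $2$ would not suffice, since $\Omega^\pm_2(q)$ may have odd order. Your orthogonal complement has dimension at least $n\geq 4$, so it does contain an involution in its $\Omega$. Also, the odd-order torus element lies in $\Omega$ because $|SO:\Omega|=2$ for odd $q$.

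Finally, $\varphi(2,L_n(q))=2$, not $1$, when $q\equiv 3\pmod 4$. This does not affect anything, since $\varphi(2,L)\le n/2$ holds anyway for $n\ge 4$.
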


\begin{proof}
See \cite[Lemma~2.4(ii)]{15Vas}.
\end{proof}

Let $t(L)\geq 5$ and let $\rho$ be a  coclique of greatest size in $GK(L)$. Since $t(p, L)\leq 4$ by \cite[Table 4]{05VasVd.t} (also see Lemmas \ref{l:J(r,L)} and \ref{l:J(p,L)} below), it follows that $p\not\in\rho$ and so we can write $E(\rho, L)=\{e(r,q)\mid r\in \rho\}$.  We define $J(L)$ to be  the union of $E(\rho, L)$, where $\rho$ runs over all cocliques of greatest size in $GK(L)$, and $E(L)$ to be the intersection of these sets.

\begin{lemma}\label{l:tL}
Let $t(L)\geq 5$.
\begin{enumerate}
\item Let $\rho$ be a coclique of greatest size in~$GK(L)$. If $J(L)=E(L)$ then
$E(\rho,L)=E(L)$. If $J(L)\neq E(L)$ then $E(\rho,L)=E(L)\cup\{j\}$ for some $j\in J(L)\setminus
E(L)$. In particular, $|E(L)|\leq t(L)\leq |E(L)|+1$.
\item If $t(L)\geq 6$, then the sets $E(L)$, $J(L)\setminus E(L)$ and numbers $t(L)$ are as listed in Table~\emph{\ref{tab:tL}}.
\end{enumerate}
\end{lemma}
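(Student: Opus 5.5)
The proof is essentially a reformulation of the adjacency criteria in Lemma~\ref{l:adj_criteria}, together with the description of maximal cocliques from \cite{05VasVd.t} and \cite{11VasVd.t}. I would reduce everything to arithmetic on the set of indices $e(r,q)$.

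First, since $t(L)\geq 5$ and $t(p,L)\leq 4$, the characteristic $p$ lies in no coclique of greatest size. I would then check that $2$ lies in no such coclique either: by \cite[Table~4]{05VasVd.t}, $t(2,L)$ is also at most $4$ or so, and hence smaller than $t(L)$.

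Next I would partition $\pi(L)\setminus\{2,p\}$ into the classes $R_k(q)$. Lemma~\ref{l:adj_criteria} shows that adjacency between $r$ and $s$ depends only on $k=e(r,q)$ and $l=e(s,q)$, and that primes inside one $R_k(q)$ are pairwise adjacent. So any coclique contains at most one prime from each $R_k(q)$, and a coclique corresponds to a set of indices $k$ that are pairwise ``nonadjacent'' in the sense of Lemma~\ref{l:adj_criteria}.

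By Lemma~\ref{l:half}, at most one index with $\varphi(k)\leq n/2$ can occur, where $\varphi$ denotes $\varphi(\cdot,L)$. All remaining indices satisfy $\varphi>n/2$. Among those indices, nonadjacency holds exactly when no ratio is an odd integer, together with the parity exception in the orthogonal case.

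For part (i), I would let $E(L)$ be the set of indices $k$ with $\varphi(k)>n/2$ and $R_k(q)\neq\varnothing$ that are pairwise nonadjacent and nonadjacent to every other candidate index. Then every maximum coclique must use all of $E(L)$; otherwise swapping in the missing index would enlarge the coclique. The choice in a maximum coclique is confined to at most one extra index $j$, taken from the ``small'' indices or the ambiguous ones. This gives $E(\rho,L)=E(L)\cup\{j\}$ or $E(\rho,L)=E(L)$, and hence $|E(L)|\leq t(L)\leq |E(L)|+1$.

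For part (ii), I would carry out this computation type by type: $L_n^\varepsilon$, $S_{2n}$/$O_{2n+1}$, and $O_{2n}^\pm$, with residues of $n$ modulo $2$ or $4$. I would use Lemma~\ref{l:zsigmondy} to guarantee that each $R_k(q)$ is nonempty, noting that the exceptions with $q\leq 3$ and $k\leq 6$ are irrelevant once $t(L)\geq 6$.

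I expect the main obstacle to be verifying that the extra index $j$ is unique, i.e. that the ``swap'' indices in $J(L)\setminus E(L)$ are mutually adjacent. This is especially delicate in the $O^{+}_{2n}$ case with the $R_n$/$R_{n/2}$ exception and the parity term $1-\varepsilon(-1)^{k+l}$, so I would handle that case separately. Ultimately I would defer to the tables in \cite{05VasVd.t,11VasVd.t} for the final values.
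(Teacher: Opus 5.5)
In the end you do what the paper does. Its proof of this lemma is nothing but the reference to \cite[Tables~2, 3]{11VasVd.t}, where both the ``one extra index'' structure in (i) and the entries of Table~\ref{tab:tL} are worked out, so your closing appeal to those tables is a legitimate proof. The self-contained derivation you build around the citation, however, does not stand on its own, and in places it contradicts the table it is meant to reproduce.

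\emph{(a) The uniqueness of the extra index is not proved.} The whole content of (i) is that a maximum coclique has at most one index outside $E(L)$. Lemma~\ref{l:half} only bounds the number of indices with $\varphi(\cdot,L)\le n/2$. You give no argument that such an index cannot occur together with a ``big'' index outside $E(L)$; you only name this as the main obstacle. For example, for $S_{2n}(q)$ with $n\equiv 3\pmod 4$ one must check that $n+1$ is adjacent to both $(n-1)/2$ and $n-1$.

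\emph{(b) Your description of $E(L)$ as a set of big indices is wrong.} As an intersection over maximum cocliques, $E(L)$ can contain indices with $\varphi(\cdot,L)\le n/2$. Examples are $i=n$ for $S_{2n}(q)$ with $n\equiv 0\pmod 4$, and $i=(n-1)/2$ for $O^+_{2n}(q)$ with $n\equiv 3\pmod 4$. Conversely, big indices can lie in $J(L)\setminus E(L)$: $n$ for $L_n(q)$ with $n$ even, and $n+1$ for $S_{2n}(q)$ with $n\equiv 3\pmod 4$ and for $O^+_{2n}(q)$ with $n\equiv 1\pmod 4$. Statement (i) would survive with a smaller common core, but your recipe would produce wrong entries in (ii).

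\emph{(c) The Zsigmondy exceptions are not irrelevant for $t(L)\ge 6$.} The fact $R_6(2)=\varnothing$ is exactly what produces the special rows $(n,q)=(12,2)$ and $(7,2)$ of Table~\ref{tab:tL}. It also causes the exclusion of $L_{11}(2)$ and $O_{14}^-(2)$ from Table~\ref{tab:6tL14}. For instance, for $L_{11}(2)$ the generic set $\{6,\dots,11\}$ loses $6$, and $t$ drops to $5$.
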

\begin{proof}
See \cite[Tables 2, 3]{11VasVd.t}.
\end{proof}

\begin{table}[!th]
\caption{Cocliques of greatest size in classical groups $L$ with $t(L)\geq 6$}\label{tab:tL}
\renewcommand{\arraystretch}{1.2}
\begin{center}
{\begin{tabular}{|c|l|c|c|c|}
  \hline
  $L$ & Conditions on $L$ & $t(L)$ & $E(L)$ & $J(L)\setminus E(L)$ \\
  \hline

  $L_n(q)$ & $n\geq11$ is odd and & $\frac{n+1}{2}$ & $\{i\mid \frac{n}{2}<i\leq n\}$ & $\varnothing$\\
           & $(n,q)\neq(11,2)$ &  & &\\
  & $n\geq12$ is even and & $\frac{n}{2}$ & $\{ {i}\mid \frac{n}{2}<i<n\}$ & $\{\frac{n}{2}, n\}$\\
  & $(n,q)\neq(12,2)$ &  & &\\
   & $n=12$ and $q=2$ & 6 & $\{7,8,9,10,11,12\}$ & $\varnothing$\\
  \hline
   $U_n(q)$ & $n\geq 11$ is odd & $\frac{n+1}{2}$ & $\{i\mid \frac{n}{2}< \nu\left(i\right)\leq n\}$ & $\varnothing$\\
  & $n\geq 12$ is even & $\frac{n}{2}$ & $\{ {i}\mid \frac{n}{2}< \nu\left(i\right)<n\}$ & $\{\frac{n}{2}, n\}$\\
  \hline
  $S_{2n}(q)$ or  & $n\geq 8$, $n\equiv{0}\pmod 4$& $\frac{3n+4}{4}$ &
  $\{i\mid \frac{n}{2}\leq\eta(i)\leq n\}$ &  $\varnothing$\\
  $O_{2n+1}(q)$ & $n\geq9$, $n\equiv{1}\pmod 4$ & $\frac{3n+5}{4}$ &
  $\{i\mid \frac{n}{2}<\eta(i)\leq n\}$ &  $\varnothing$\\
  &  $n\geq10$, $n\equiv{2}\pmod 4$ & $\frac{3n+2}{4}$ & $\{i\mid\frac{n}{2}<\eta(i)\leq n\}$ & $\{\frac{n}{2},n\}$\\
    & $n\geq7$, $n\equiv{3}\pmod 4$, and & $\frac{3n+3}{4}$ & $\{i\mid \frac{n+1}{2}<\eta(i)\leq n\}$ &  $\{ \frac{n-1}{2},n-1,$\\
 &  $(n,q)\neq(7,2)$  &  & & $n+1\}$\\
  & $n=7$ and $q=2$ & 6 & $\{5,7,10,12,14\}$ & $\{3,8\}$ \\
 \hline
 $O_{2n}^+(q)$ & $n\geq 8$, $n\equiv{0}\pmod 4$ & $\frac{3n}{4}$ & $\{i\mid
\frac{n}{2}\leq\eta(i)\leq n,$ & $\varnothing$\\
 & & & $i\neq2n\}$ & \\
 & $n\geq 9$, $n\equiv{1}\pmod 4$ & $\frac{3n+1}{4}$ & $\{i\mid
 \frac{n}{2}<\eta(i)\leq n,$ & $\{n-1, n+1\}$\\
 &  &  & $i\neq2n,n+1\}$ &\\
 & $n\geq10$, $n\equiv{2}\pmod 4$ & $\frac{3n-2}{4}$ & $\{i\mid
\frac{n}{2}<\eta(i)\leq n,$ & $\{\frac{n}{2},n\}$\\
 &  &  & $i\neq2n\}$ &\\
 & $n\geq7$, $n\equiv{3}\pmod 4$ & $\frac{3n+3}{4}$ & $\{i\mid
\frac{n-1}{2}\leq\eta(i)\leq n,$ & $\varnothing$\\
 & & & $i\neq2n,n-1\}$ &\\
 \hline
 $O_{2n}^-(q)$& $n\geq8$, $n\equiv{0}\pmod 4$ & $\frac{3n+4}{4}$ & $\{i\mid
\frac{n}{2}\leq\eta(i)\leq n\}$ & $\varnothing$\\
 & $n\geq 9$, $n\equiv{1}\pmod 4$ & $\frac{3n+1}{4}$ & $\{i\mid
\frac{n}{2}<\eta(i)\leq n,$
 & $\{\frac{n+1}{2}, n-1\}$\\
 & & & $i\neq n,\frac{n+1}{2}\}$ & \\
 & $n\geq10$, $n\equiv{2}\pmod 4$ & $\frac{3n+2}{4}$ & $\{i\mid
\frac{n}{2}<\eta(i)\leq n\}$ &
 $\{\frac{n}{2}, n-2, n\}$\\
 & $n\geq 7$, $n\equiv{3}\pmod 4$, and & $\frac{3n+3}{4}$ & $\{i\mid
\frac{n-1}{2}\leq\eta(i)\leq n,$ & $\varnothing$\\
 & $(n,q)\neq(7,2)$ &  &$i\neq n,\frac{n-1}{2}\}$ &\\

 \hline
\end{tabular}}
\end{center}
\end{table}

Proving Theorem \ref{t:main}, we will deal with simple classical groups $L$ and $S$ satisfying $6\leq t(L)\leq 13$ and $t(L)\leq t(S)\leq t(L)+1$ (see Lemma \ref{l:rest} below). So it is convenient to have a list of simple classical groups $L$ with $6\leq t(L)\leq 14$, and  these groups are given in  Table \ref{tab:6tL14}.

\begin{table}[!hbt]
   \centering
   \caption{The classical groups $L$ with $6\leq t(L)\leq 14$} \label{tab:6tL14}
   \renewcommand{\arraystretch}{1.2}
   \begin{tabular}{|c|c|}
\hline
$t(L)$ &  Type of $L$  \\ \hline
6 & $L^{\pm}_{11}$  (except $L_{11}(2)$), $L^{\pm}_{12}$, $O_{15}$, $S_{14}$, $O_{14}^+$, $O_{16}^+$, $O_{14}^-$ (except $O_{14}^-(2)$) \\ 
7 & $L^{\pm}_{13}$, $L^{\pm}_{14}$, $O_{17}$, $S_{16}$, $O_{18}^{\pm}$, $O_{20}^+$, $O_{16}^-$  \\ 
8 & $L^{\pm}_{15}$, $L^{\pm}_{16}$, $O_{19}$, $S_{18}$, $B_{21}$, $S_{20}$, $O_{20}^-$ \\ 
9 & $L^{\pm}_{17}$, $L^{\pm}_{18}$, $O_{23}$, $S_{22}$, $O_{22}^\pm$, $O_{24}^+$  \\ 
10 & $L^{\pm}_{19}$, $L^{\pm}_{20}$,  $O_{25}$, $S_{24}$, $O_{24}^-$, $O_{26}^\pm$, $O_{28}^+$ \\ 
11 & $L^{\pm}_{21}$, $L^{\pm}_{22}$, $O_{27}$, $S_{26}$, $O_{29}$, $S_{28}$, $O_{28}^-$  \\ 
12 & $L^{\pm}_{23}$, $L^{\pm}_{24}$,  $O_{31}$, $S_{30}$, $O_{30}^\pm$, $O_{32}^+$ \\ 
13 & $L^{\pm}_{25}$, $L^{\pm}_{26}$, $O_{33}$, $S_{32}$, $O_{32}^-$, $O_{34}^\pm$, $O_{36}^+$  \\ 
14 & $L^{\pm}_{27}$, $L^{\pm}_{28}$, $O_{35}$, $S_{34}$, $O_{37}$, $S_{36}$, $O_{36}^-$   \\ \hline
\end{tabular}
\end{table}

Comparing to cocliques of greatest size, $\{2\}$-cocliques and $\{p\}$-cocliques of greatest size have a more rigid structure.

\begin{lemma} \label{l:J(r,L)} Suppose that $\prk(L) \geq 4$ and $r\in\{2,p\}$. If $s\neq r$ and $rs\not\in\omega(L)$, then $s\not\in\{2,p\}$. If $J(r,L)=\{e(s,q)\mid s\in\pi(L), s\neq r, rs\not\in\omega(L)\}$, then $|J(r,L)|=t(r,L)-1$ and  every $\{r\}$-coclique $\rho$ of $GK(L)$ of greatest size has form $\{r\}\cup \bigcup\{r_i(q)\}$, where $i$ runs over $J(r,L)$.
\end{lemma}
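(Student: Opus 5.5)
The plan is to read the statement off the known adjacency criteria for $2$ and for the characteristic $p$ in prime graphs of simple classical groups. These criteria are \cite[Propositions 3.1, 4.1, 4.2]{05VasVd.t} and their corrections in \cite{11VasVd.t}, which the paper already uses as its standard reference.

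The first step is the claim that if $s\neq r$ and $rs\notin\omega(L)$, then $s\notin\{2,p\}$. If $p=2$ there is nothing to prove. If $p$ is odd, we must show that $2p\in\omega(L)$ once $\prk(L)\geq 4$. For this, exhibit a unipotent element of order $p$ centralizing an involution. Concretely, take a Levi-type subgroup $SL_2(q)\times SL_2(q)$ (or a suitable $SL_2(q)\times$ torus) inside the rank $\geq4$ group. Its central involution, or an involution of one factor, commutes with a transvection or root element of the other factor. So $p$ and $2$ are adjacent.

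The second step addresses the structure of the $\{r\}$-cocliques. By the adjacency criterion for $r\in\{2,p\}$ with a prime $s\notin\{2,p\}$, whether $rs\in\omega(L)$ depends only on $e(s,q)$:
\begin{itemize}
\item for $r=p$, the condition is on $\varphi(s,L)$ exceeding a bound like $n-1$ or $n-2$, depending on the type;
\item for $r=2$, it is a condition on $e(s,q)$ together with $(q\pm1)_2$ and the type.
\end{itemize}
Hence the set $J(r,L)$ of indices $i$ with $r\not\sim r_i(q)$ is well defined, and every prime $s$ nonadjacent to $r$ is some $r_i(q)$ with $i\in J(r,L)$. Next, I would check from the lists that any two primes with distinct indices $i\neq j$ in $J(r,L)$ are nonadjacent, using Lemma~\ref{l:adj_criteria}. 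The indices in $J(r,L)$ are all "large", with $\varphi$ close to $n$, so $\varphi(r_i)+\varphi(r_j)>n$ and the ratio condition fails. Two distinct primes with the same index are adjacent, since they both divide the order of a single cyclic torus. Thus a greatest $\{r\}$-coclique consists of $r$ together with exactly one prime $r_i(q)$ for each $i\in J(r,L)$. It remains to note that each such $R_i(q)$ is nonempty by Lemma~\ref{l:zsigmondy}, since the exceptions $(2,6)$ etc.\ do not arise for the relevant large $i$ when $\prk\geq4$. This gives $|J(r,L)|=t(r,L)-1$.

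The main obstacle is the case-by-case bookkeeping for $r=2$, especially for orthogonal groups in even dimension. There, adjacency of $2$ and $r_i(q)$ depends on $\varepsilon$, on $n$, and on whether $q\equiv\pm1\pmod 4$. Two things have to be verified in every case: that $J(2,L)$ has pairwise nonadjacent indices, and that Zsigmondy exceptions do not occur. I would handle this by simply quoting \cite[Table 4]{05VasVd.t}, where $t(2,L)$ and the corresponding indices are listed, and checking the pairwise nonadjacency directly against Lemma~\ref{l:adj_criteria}.
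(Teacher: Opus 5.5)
Your approach is essentially the paper's. The lemma is stated there without a separate proof and is read off from the Vasil'ev--Vdovin adjacency criteria for $2$ and $p$ (and their tables of $\{2\}$- and $\{p\}$-cocliques), which is exactly what you propose.

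One correction concerns your closing Zsigmondy remark: it is false. For example, for $L_7(2)$ the expected index $n-1=6$ has $R_6(2)=\varnothing$, and for $U_4(2)$ the index $\nu(3)=6$ fails in the same way. The remark is also unnecessary. The lemma defines $J(r,L)$ as the set of values $e(s,q)$ over primes $s\in\pi(L)$ that actually occur, so every index in $J(r,L)$ is realized by construction. The equality $|J(r,L)|=t(r,L)-1$ and the shape of the greatest $\{r\}$-cocliques then follow from your two adjacency facts alone: primes with the same index are adjacent, and primes with distinct indices in $J(r,L)$ are not.
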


\begin{lemma} \label{l:J(p,L)} Suppose that $\prk(L) \geq 8$
if $L$ is linear or unitary and $\prk(L) \geq 5$ otherwise. Then $\varnothing \neq J(2,L)\subseteq J(p,L)$ and $J(p,L)$ is as follows.
\begin{enumerate}
    \item If $L=L_n(q)$ or $U_n(q)$, then $J(p,L)=\{n,n-1\}$ or $\{\nu(n),\nu(n-1)\}$, respectively.
    \item If $L=S_{2n}(q)$ or $O_{2n+1}(q)$, where $n$ is even, then $J(p,L)=\{2n\}$.
    \item If $L=S_{2n}(q)$, $O_{2n+1}(q)$, or $O_{2n+2}^+(q)$, where $n$ is odd, then $J(p,L)=\{n,2n\}$.
    \item If $L=O_{2n}^+(q)$ or $O_{2n}^-(q)$, where $n$ is odd, then $J(p,L)=\{\varepsilon n,2n-2\}$, where $-n$ stands for $2n$.
      \item If $L=O_{2n}^-(q)$, where $n$ is even, then  $J(p,L)=\{2n,2n-2, n-1\}$ and $2n\in J(2,L)$.
\end{enumerate}
\end{lemma}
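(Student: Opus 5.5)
The statement immediately above is Lemma~\ref{l:J(p,L)}, which describes $J(p,L)$ for classical groups of large enough rank. I would prove it by reading off the $\{p\}$-cocliques and $\{2\}$-cocliques from the adjacency criteria for $p$ and $2$ in \cite[Propositions 3.1, 4.1--4.4]{05VasVd.t} and \cite{11VasVd.t}.

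\textbf{Primes nonadjacent to $p$.} The criterion I would use is this: for $r\in\pi(L)\setminus\{p\}$, $r$ is nonadjacent to $p$ exactly when no maximal torus whose order is divisible by $r$ has a centralizer containing a unipotent element. Concretely, $r_i(q)$ is nonadjacent to $p$ iff every semisimple element of order $r$ acts on the natural module with fixed space (or nondegenerate complement) too small to carry a nontrivial unipotent.

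Running through the types gives the following.
\begin{enumerate}
\item For $L_n^\varepsilon(q)$: an element of order $r_i(\varepsilon q)$ needs a block of size $\nu$-adjusted $i$. A unipotent element needs an extra dimension $\geq 2$, or $\geq 1$ together with the determinant correction. Hence only $i=n$ and $i=n-1$ (with $\nu$ applied in the unitary case) survive.
\item For $S_{2n}$ and $O_{2n+1}$: $r_i$ with $\eta(i)$ close to $n$ occupies a $2\eta(i)$-dimensional nondegenerate subspace. A leftover of dimension $2$ (symplectic), or at least $3$ for the orthogonal case, supports unipotents. The survivors are therefore $\eta(i)=n$, i.e.\ $i\in\{n,2n\}$ when $n$ is odd. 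When $n$ is even only $i=2n$ remains, since $r_n$ then lies in a torus of type $GL_{n/2}(q^2)$-like whose centralizer contains unipotents.
\item For $O_{2n}^\pm$ the analysis is the same, keeping track of the Witt type of the leftover $2$-dimensional space (a $1$-dimensional anisotropic leftover gives no unipotent). This yields $\{\varepsilon n, 2n-2\}$ for $n$ odd and $\{2n,2n-2,n-1\}$ for $O^-_{2n}$ with $n$ even.
\end{enumerate}

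\textbf{Primes nonadjacent to $2$.} Here I would use the criteria in \cite[Propositions 4.1--4.4]{05VasVd.t}. An odd $r$ is nonadjacent to $2$ iff an element of order $r$ cannot commute with an involution. This forces $r$ to lie in an "irreducible-sized" torus, so $e(r,q)$ is among the values listed above, which gives $J(2,L)\subseteq J(p,L)$. Nonemptiness comes from $r_{2n}$ in the symplectic, orthogonal and $O^-$ cases, and from $r_n$ or $r_{n-1}$ (whichever makes the relevant determinant/$\gcd$ obstruction work) in the linear case. The rank bounds $\prk(L)\geq 8$ (linear/unitary) and $\geq 5$ (otherwise) are imposed so that Lemma~\ref{l:zsigmondy} guarantees all listed primitive divisors exist and small-rank exceptions in those tables do not occur. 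The fact that $2n\in J(2,L)$ for $O^-_{2n}$ with $n$ even follows since $r_{2n}$ lies only in the cyclic torus of order $(q^n+1)/(4,q^n+1)$, whose centralizer has odd order.

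\textbf{Main obstacle.} The main obstacle is careful bookkeeping in the even-dimensional orthogonal case. There the sign $\varepsilon$ and the parity of $n$ interact to decide whether $r_{n}$ or $r_{2n}$ sits in a torus with an extra $O_2^\pm$ factor. I would settle it by comparing directly with the tables of \cite{05VasVd.t}, from which the lemma is essentially a transcription.
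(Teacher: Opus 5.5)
Your route is the paper's route. The paper gives no separate argument for this lemma; it takes it from the adjacency criteria and the tables of $\{p\}$- and $\{2\}$-cocliques in \cite{05VasVd.t,11VasVd.t}. Your computation of $J(p,L)$ reproduces the listed sets: you ask whether the complement of the subspace moved by an element of order $r_i(q)$ is large enough to carry a unipotent element.

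Two small repairs are needed there. First, the criterion must be phrased via centralizers of \emph{elements} of order $r$, not of maximal tori. The centralizer of a maximal torus is the torus itself, so as literally written every $r$ would be nonadjacent to $p$. Second, you never treat $O^+_{2n+2}(q)$, $n$ odd, from item (iii), although the same complement argument gives $\{n,2n\}$.

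The step that actually fails is your justification of $J(2,L)\neq\varnothing$: $r_{2n}(q)$ is not in general nonadjacent to $2$.
\begin{enumerate}
\item If $L$ is $S_{2n}(q)$, $O_{2n+1}(q)$ or $O^+_{2n+2}(q)$ with $n$ odd and $q\equiv 3\pmod 4$, then $m_{2n}(q)=(q^n+1)/2$ is even, because $(q^n+1)_2=(q+1)_2\geq 4$. So $2n\notin J(2,L)$, and in fact $J(2,L)=\{n\}$.
\item If $L=O^+_{2n}(q)$ with $n$ odd, then $r_{2n}(q)\notin\pi(L)$ at all.
\item If $L=O^-_{2n}(q)$ with $n$ odd and $q\equiv 7\pmod 8$, then $(q^n+1)/4$ is even and $J(2,L)=\{2n-2\}$. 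The paper uses exactly this for $O^\pm_{22}(q)$ at the end of the proof of Lemma~\ref{l:t(S)=t(L)luo}.
\end{enumerate}
Your claim holds for every odd $q$ only when $n$ is even, i.e.\ for $S_{2n}(q)$, $O_{2n+1}(q)$, $O^-_{2n}(q)$, where $(q^n+1)_2=2$.

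Nonemptiness is still true, but it needs a $2$-adic case split on $q$ modulo $4$. Since $m_i(q)$ is the unique maximal element order divisible by $r_i(q)$ (Table~\ref{tab:mi}), $i\in J'(p,L)$ lies in $J(2,L)$ if and only if $m_i(q)$ is odd. Then:
\begin{enumerate}
\item For $n$ odd and $L\in\{S_{2n}(q),O_{2n+1}(q),O^+_{2n+2}(q)\}$, exactly one of $(q^n\pm1)/2$ is odd.
\item For $L=O^\varepsilon_{2n}(q)$ with $n$ odd, $(q^n-\varepsilon)/(4,q-\varepsilon)$ is odd when $q\equiv-\varepsilon\pmod 4$, while $(q^{n-1}+1)(q+\varepsilon)/(4,q-\varepsilon)$ is odd when $q\equiv\varepsilon\pmod 4$.
\end{enumerate}
For $q$ even the claim is trivial, since then $J(2,L)=J(p,L)$.
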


It is convenient to consider the group $O_{2n}^-(q)$, where $n$ is even, together with the groups $S_{2n}(q)$ and $O_{2n+1}(q)$, so we define $J'(p,L)=J'(2,L)=\{2n\}$ if $L=O_{2n}^-(q)$ and $n$ is even, and   $J'(2,L)=J(2,L)$ and $J'(p,L)=J(p,L)$ otherwise.

\begin{lemma}\label{l:disjoint} Suppose that $\prk(L) \geq 8$
if $L$ is linear or unitary and $\prk(L) \geq 5$ otherwise.
\begin{enumerate}
\item If $J(p,L)=\{i,j\}$ with $i\neq j$, then $r_i(q)$ and $r_j(q)$ have disjoint neighborhoods in $GK(L)$.
\item If $L=S_{2n}(q)$, $O_{2n+1}(q)$, or $O_{2n}^-(q)$, where $n$ is even, and $j\in\{n-1,2n-2\}$, then $r_{2n}(q)$ and $r_j(q)$ have disjoint neighborhoods in $GK(L)$.
\item  For every $i\in J'(p,L)$, there is a unique $m_i(q)\in\mu(L)$ divisible by $r_i(q)$ and $m_i(q)$ is as in Table {\rm \ref{tab:mi}}.
\end{enumerate}
\end{lemma}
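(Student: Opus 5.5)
The plan is to prove (iii) first and then deduce (i) and (ii) from it, using Buturlakin's description of $\mu(L)$ \cite{08But.t,10But.t} (with the corrections of \cite[Lemma 2.3]{16Gr.t}) together with the adjacency criteria of Lemma~\ref{l:adj_criteria}.

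For (iii), fix $i\in J'(p,L)$ and an element $g\in L$ whose order is divisible by $r=r_i(q)$. The $r$-part of $g$ is semisimple. I would pass to the natural module of the corresponding quasisimple linear group. An element of order $r$ there must have an irreducible block of dimension $i$ in the linear case, and of dimension $\nu(i)$ in the unitary case. In the symplectic and orthogonal cases it must have a block of dimension $2\eta(i)$, which is of plus or minus type according to the parity of $i$. By the shape of $J'(p,L)$ in Lemma~\ref{l:J(p,L)}, this block takes up all, or all but at most two, of the dimension. Hence the centralizer of the $r$-element is a cyclic torus $GL_1(q^{i})$, $GU_1$, or $GL_1^{\pm}(q^{\eta(i)})$, possibly times a classical group of dimension at most $2$.

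The next point is that $p$ is not adjacent to $r$. So no unipotent part can occur in $g$, and $g$ lies in a torus of the form $T_i\times T'$, where $T'$ is a maximal torus of the small complementary group. Listing the few possibilities for $T'$, and using Buturlakin's formulas to pass to the simple quotient (dividing by the appropriate $(4,q^n\pm1)$ or $(n,q-\varepsilon)$ factor), should show that exactly one maximal element order is divisible by $r$. That order is $m_i(q)$ as in Table~\ref{tab:mi}. Typical values are $(q^n+1)/(2,q-1)$ for $i=2n$ with $n$ even, and $(q^{n}\pm1)(q\pm1)/d$ for $i\in\{n,2n\}$ with $n$ odd.

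Given (iii), the neighborhood of $r_i(q)$ is exactly $\pi(m_i(q))\setminus\{r_i(q)\}$. For (i), it therefore suffices to show the following. Every prime dividing both $m_i(q)$ and $m_j(q)$ is $2$ or $p$, or else coincides with $r_i$ or $r_j$; the last option is impossible because $e(r_i,q)\neq e(r_j,q)$. Such a common divisor divides, for instance, $(q^n+1,q^{n-1}+1)\cdot(q\pm1)$. By Lemma~\ref{l:r-part}, any odd common prime $s$ would have $e(s,q)\in\{1,2\}$. I would then check from the explicit form of $m_i,m_j$ that only one of the two contains the factor $q-1$ or $q+1$ with a positive $s$-part. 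The primes $2$ and $p$ are excluded because $J(2,L)\subseteq J(p,L)$ forces $2r_i,pr_i\notin\omega(L)$ for every $i$ in $J(p,L)$.

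Part (ii) requires more, since $r_{j}(q)$ for $j\in\{n-1,2n-2\}$ may divide several maximal orders. Here I would instead bound its neighborhood with Lemma~\ref{l:adj_criteria}. Every neighbor $s$ of $r_j$ has $\eta(e(s,q))\le 1$, or $\eta(e(s,q))+(n-1)\le n$, or $e(s,q)/j$ odd, or is $2$ or $p$. Every neighbor of $r_{2n}$ divides $(q^n+1)$, so it has $e(s,q)\mid 2n$ with $2n/e(s,q)$ odd, and is not $2$ or $p$. These two sets are disjoint, since $(2n,2n-2)=2$ and $e(s,q)=2$ forces $s\mid q+1$, while $(q+1)_s$ divides $q^n+1$ only when $n$ is odd.

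The main obstacle will be the bookkeeping in (iii) for the orthogonal groups, particularly $O^-_{2n}(q)$ with $n$ even and the cases $O_{2n+2}^+$ or $O_{2n}^\pm$ with $n$ odd. There the complementary torus $T'$ has several types, and the passage from $\Omega$ to the simple group alters the $2$-parts. I would handle these cases by direct inspection of Buturlakin's list of maximal tori and their orders in the simple group.
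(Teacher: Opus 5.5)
The paper gives no proof of this lemma. It is recorded as a direct consequence of the descriptions of $\mu(L)$ in \cite{08But.t,10But.t} and of adjacency in \cite{05VasVd.t,11VasVd.t}, so your outline has to stand on its own. Your plan is the natural way to make the lemma explicit, and its overall structure is sound: locate every element whose order is divisible by $r_i(q)$ in a torus $T_i\times T'$ with a complement of dimension at most $2$, derive (iii), then read off (i) and (ii).

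One step in (i) is wrong as written. You exclude $2$ as a common neighbour by saying that $J(2,L)\subseteq J(p,L)$ forces $2r_i(q)\notin\omega(L)$ for \emph{every} $i\in J(p,L)$. That inclusion only says that the primes nonadjacent to $2$ lie among the $r_i(q)$ with $i\in J(p,L)$; it does not give the converse. The claim is in fact false. For $L=S_{2n}(q)$ with $n$ odd and $q\equiv1\pmod 4$, the number $m_n(q)=(q^n-1)/2$ is even, so $2r_n(q)\in\omega(L)$. What you actually need is that $2$ is not a \emph{common} neighbour. This follows because $J(2,L)$ is nonempty and contained in $J(p,L)=\{i,j\}$, so at least one of $r_i(q)$, $r_j(q)$ is nonadjacent to $2$. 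Alternatively, check from Table~\ref{tab:mi} that at most one of $m_i(q)$, $m_j(q)$ is even. Your exclusion of $p$ is fine, by the definition of $J(p,L)$.

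There are two smaller slips.

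In (ii), the divisibility in Lemma~\ref{l:adj_criteria} is reversed. A neighbour $s$ of $r_j(q)$ with $\varphi(s,L)\le n-1$ must have $j/e(s,q)$ an odd integer, not $e(s,q)/j$. With the correct direction your argument goes through. An odd common neighbour would have $e(s,q)$ dividing $(2n,j)\le 2$, and such $s$ do not divide $q^n+1$ when $n$ is even.

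In (iii), your ``typical value'' $(q^n\pm1)(q\pm1)/d$ for $i\in\{n,2n\}$ with $n$ odd is not what the computation gives. In that case $m_{2n}(q)=(q^n+1)/(2,q-1)$ and $m_n(q)=(q^n-1)/(2,q-1)$. The reason is that the cyclic group $SO(W)$ of a $2$-dimensional complement, as in $O_{2n+2}^+(q)$, has order $q-1$ or $q+1$, which divides $q^n-1$ or $q^n+1$ respectively. The product shape $(q^{n-1}+1)(q+\varepsilon)/(4,q-\varepsilon)$ occurs only for $i=2n-2$ in $O_{2n}^\varepsilon(q)$.

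The actual entries of Table~\ref{tab:mi}, in particular the denominators coming from the spinor norm and the centre, are only promised in your sketch. That is the same appeal to Buturlakin's description that the paper makes implicitly.
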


\begin{table}[!h]
   \centering
   \caption{The numbers $m_i(q)$ for $i\in J'(p,L)$} \label{tab:mi}
\renewcommand{\arraystretch}{1.4}
$
\begin{array}{|l|r@{=}lr@{=}l|}
\hline
L&\multicolumn{4}{|c|}{m_i(q)}\\
\hline
L_n(q), n\geq 9& m_n(q)&\frac{q^n-1}{(q-1)(n,q-1)}& m_{n-1}(q)&\frac{q^{n-1}-1}{(n,q-1)}\\
U_n(q), n\geq 9& m_{\nu(n)}(q)&\frac{q^n-(-1)^n}{(q+1)(n,q+1)}& m_{\nu(n-1)}(q)&\frac{q^{n-1}-(-1)^{n-1}}{(n,q+1)}\\
S_{2n}(q), O_{2n+1}(q), O_{2n+2}^+(q), n\geq 5 \text{ odd}&m_{2n}(q)&\frac{q^n+1}{(2,q-1)}& m_{n}(q)&\frac{q^n-1}{(2,q-1)}\\
S_{2n}(q), O_{2n+1}(q), O_{2n}^-(q), n\geq 6 \text{ even}& m_{2n}(q)&\frac{q^n+1}{(2,q-1)}&\multicolumn{2}{c|}{}\\
O_{2n}^\varepsilon(q),   n\geq 5 \text{ odd} & m_{2n-2}(q)&\frac{(q^{n-1}+1)(q+\varepsilon)}{(4,q-\varepsilon)}& m_{\varepsilon n}(q)&\frac{q^n-\varepsilon}{(4,q-\varepsilon)}\\
\hline
\end{array}
$
\end{table}


Lemma \ref{l:disjoint} provides some vertices of $GK(L)$ with disjoint neighborhoods but there can be another pairs of vertices enjoying this property.
The next lemmas gives some properties of such pairs.

\begin{lemma}[{\cite[Lemma 2.8]{26GSV_arxiv}}]\label{l:unique}
Let $S$ be a simple classical group over a field of order $u$ and $t(S)\geq 5$. Assume that $u>3$ in all items except {\rm(v)}. Suppose that $s_1\in\pi(S)$ and $s_2\in \pi(S)$ have disjoint neighborhoods in $GK(S)$. Then, up to renumbering of $s_1$ and $s_2$, the following hold.

\begin{enumerate}
\item If $S=L_m^\tau(u)$, where $m$ is even, then  $s_1\in R_{m-1}(\tau u)$. If, in addition, $u-\tau$ does not divide $m$, then $s_2\in R_m(\tau u)$.

\item If $S=L_m^\tau(u)$, where $m$ is odd, then $s_1\in R_m(\tau u)$. If, in addition, $3$ divides $m$,  then $s_2\in R_{m-1}(\tau u)\cup R_{m-2}(\tau u)$.

\item If $S$ is one of $S_{2m}(u)$, $O_{2m+1}(u)$, or $O_{2m+2}^+(u)$,  where $m$ is odd, then $s_1\in R_{2m}(u)$ and $s_2\in R_{m}(u)$.

\item If $S=O_{2m}^\tau(u)$, where $m$ is odd, then  $s_1\in R_m(\tau u)$. In, in addition, $\tau u\neq +5$,  then $s_2\in R_{2(m-1)}(\tau u)$.

\item If $S$ is one of
$S_{2m}(u)$, $O_{2m+1}(u)$,  or $O_{2m}^-(u)$, where
$m$ is even, then $s_1\in R_{2m}(u)$. If, in addition, $(m)_2=2$, then $s_2\in R_{m-1}(\tau u)\cup R_{2(m-1)}(u)$ if $S\neq O_{2m}^-(u)$ and $s_2\in R_{m-1}(\tau u)\cup R_{2(m-1)}(u)\cup R_{2(m-2)}(u)$ otherwise.
\end{enumerate}
\end{lemma}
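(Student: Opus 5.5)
The plan is to argue directly from the adjacency criteria for prime graphs of classical groups. Every prime whose index $\varphi(\cdot,S)$ is not close to the maximum has a large neighbourhood. That neighbourhood contains a fixed \emph{small} vertex: the characteristic $v$ of $S$, a primitive divisor $r_1(u)$ or $r_2(u)$, or $2$. So two primes with disjoint neighbourhoods cannot both be of this kind, and the case analysis in (i)--(v) comes out of the explicit neighbourhoods of the ``top'' primes. Throughout, let $m=\prk(S)$ and let $v$ be the characteristic of $S$. The condition $t(S)\geq 5$ together with Table~\ref{tab:tL} and its small-rank analogues from \cite{11VasVd.t} gives $m\geq 7$ or so in every type. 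This lets us apply Lemmas~\ref{l:adj_criteria}, \ref{l:half}, \ref{l:J(r,L)} and \ref{l:J(p,L)} to $S$. The hypothesis $u>3$ (outside (v)) guarantees, by Lemma~\ref{l:zsigmondy}, that $R_1(u)$ and $R_2(u)$ contain odd primes distinct from $v$. These will serve as the test common neighbours.

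\textbf{Step 1: $2$ and $v$ are excluded.} Suppose $s_1\in\{2,v\}$. By Lemma~\ref{l:J(r,L)}, the non-neighbours of $s_1$ are just the primes $r_i(u)$ with $i\in J(s_1,S)$. By Lemma~\ref{l:J(p,L)}, $J(s_1,S)$ has at most three elements, all of them ``top'' indices. Then $s_2$ is adjacent to $s_1$ unless $e(s_2,u)$ is a top index. In that case $s_2$ is adjacent to some $r_1(u)$ or $r_2(u)$ (by Lemma~\ref{l:adj_criteria} when $\varphi(s_2,S)+1\leq m$, or by the explicit maximal orders $m_i(u)$ in Table~\ref{tab:mi}, which are divisible by $u\pm1$). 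This $r_1(u)$ or $r_2(u)$ is also adjacent to $s_1$, a contradiction. Hence $s_1,s_2\notin\{2,v\}$ and Lemma~\ref{l:adj_criteria} applies to both.

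\textbf{Step 2: reduce to top indices.} Put $k_i=\varphi(s_i,S)$. If both $k_1,k_2\leq m/2$, then $s_1s_2\in\omega(S)$ by Lemma~\ref{l:half}, and they have common neighbours of small index, so this is impossible. More sharply, Lemma~\ref{l:adj_criteria} shows that a prime with $k+1\leq m$ (respectively $2k+2\leq 2m-1$ in the orthogonal case) is adjacent to every $r_1(u)$ with $e=1$, or to $r_2(u)$, according to parity. So at least one of $s_1,s_2$, say $s_1$, has $k_1$ maximal, i.e.\ $e(s_1,u)$ is one of the indices listed as $s_1$ in (i)--(v). Its neighbourhood is then explicit: it consists of the prime divisors of the unique maximal order $m_i(u)$ from Lemma~\ref{l:disjoint}(iii) and Table~\ref{tab:mi} (or its linear/unitary analogue), which is essentially $u^{m}\pm1$ or $(u^{m-1}\pm1)(u\pm1)$ divided by a small constant.

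\textbf{Step 3: pin down $s_2$.} Now $s_2$ must avoid $\pi(m_i(u))\setminus\{s_2\}$. Every index $e$ with $\varphi(e)$ not maximal and not dividing the index of $s_1$ gives a prime adjacent to $r_1$ or $r_2$, and these lie in $\pi(m_i(u))$ by Lemma~\ref{l:r-part}. This leaves only the second-top indices listed: $R_{m-1}$, $R_m$, $R_{m-2}$, $R_{2(m-1)}$, $R_{2(m-2)}$, as appropriate.

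In (i) and (ii) there are two subtleties. When $u-\tau$ divides $m$, the $v$-part and the divisors of $(m,u-\tau)$ spoil the argument, which is why the extra hypothesis appears. When $3\mid m$, a prime in $R_{m-1}\cup R_{m-2}$ can be isolated from $R_m$ via the factor $(m,u-\tau)$. In (iv) the exception $\tau u=+5$ arises because $R_2(5)=\{3\}$ and $R_1(5)=\{2\}$ degenerate; I would handle it by direct inspection. In (v) with $u\leq3$ I would only claim $s_1\in R_{2m}(u)$, using that $r_{2m}$ is the unique vertex whose neighbourhood lies inside $\pi(u^m+1)$.

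I expect this last bookkeeping to be the main obstacle. It means checking, type by type and for each residue of $m$ modulo $4$, that every remaining candidate for $s_2$ shares a neighbour with $s_1$, with special care for the small-$u$ degeneracies of Lemma~\ref{l:zsigmondy} and for the gcd factors in Table~\ref{tab:mi}. I would organize it by first tabulating the neighbourhoods of the two or three top indices and then intersecting these tables.
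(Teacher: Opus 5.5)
The paper gives no proof of this lemma; it is quoted from \cite{26GSV_arxiv}. Your argument therefore has to stand on its own, and as written it has a genuine gap. Everything rests on one mechanism: the neighbourhood of each ``top'' prime contains a prime of $R_1(u)\cup R_2(u)$ because the maximal orders $m_i(u)$ are divisible by $u\pm1$, and every other prime is adjacent to such a prime. This is false in exactly the cases that give the statement its shape.
\begin{itemize}
\item For $L_m^\tau(u)$ with $m$ odd, $m_m(\tau u)=(u^m-\tau)/((u-\tau)(m,u-\tau))$ contains no prime of $R_2(\tau u)$. It contains $r\in R_1(\tau u)$ only if $r$ is odd and $(u-\tau)_r<(m)_r$.
\item For $m$ even, $(u^m+1)/(2,u-1)$ is coprime to $u^2-1$.
\end{itemize}

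Consequently Step~1 is false. In $L_{11}(4)$ (so $u=4$, $t=6$) one has $m_{11}(4)=(4^{11}-1)/3=23\cdot 89\cdot 683$, with all three primes in $R_{11}(4)$. So $R_{11}(4)$ is a connected component, and $v=2$ and $23$ have disjoint neighbourhoods. Likewise $R_{11}(7)$ is a component of $GK(L_{12}(7))$, and similar components occur in $O^+_{2m}(5)$ with $m$ prime and in $S_{2m}(u)$ with $m$ a power of $2$. In these cases the lemma deliberately says nothing about the second prime, which can be $v$ or $2$. So you cannot first discard $\{2,v\}$ and then apply Lemma~\ref{l:adj_criteria} to both primes.

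Step~2 fails for the same reason. Lemma~\ref{l:adj_criteria}(i) says nothing about primes with $\varphi=1$. In $L_{12}(7)$ the prime $r_{11}(7)$ is adjacent neither to $R_1(7)=\{3\}$ nor to $R_2(7)=\{2\}$, although $11+1\le 12$. Moreover, in (i) the forced prime is the one in $R_{m-1}(\tau u)$, not one of maximal $\varphi$.

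What actually does the work are test vertices $r_d$ with $d$ a proper divisor of the top index.
\begin{itemize}
\item In (i), for $m$ even, $r_2(\tau u)$ (or $2$) divides $m_m(\tau u)$. It is adjacent to $v$, to $2$ and to every prime with $\varphi\le m-2$, so a prime of $R_m(\tau u)$ can only be paired with $R_{m-1}(\tau u)$. The second half of (i) uses some $r\in R_1(\tau u)$ with $(u-\tau)_r>(m)_r$; such $r$ exists precisely when $u-\tau\nmid m$.
\item In (ii), $3\mid m$ gives $u^3-\tau\mid u^m-\tau$. So $r_3(\tau u)$ lies in the neighbourhood of $R_m(\tau u)$; it exists because $\tau u\neq -2$. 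Its non-neighbours of large index are exactly $R_{m-1}(\tau u)\cup R_{m-2}(\tau u)$. This is where $3\mid m$ enters, not through the factor $(m,u-\tau)$ as you suggest.
\item In (v), $(m)_2=2$ gives $u^2+1\mid u^m+1$, so $r_4(u)$ is the common neighbour. Its non-neighbours are $R_{m-1}\cup R_{2(m-1)}$, plus $R_{2(m-2)}$ in $O^-_{2m}(u)$ from the parity term of Lemma~\ref{l:adj_criteria}(iii).
\end{itemize}
Since $R_4(u)\neq\varnothing$ for every $u$, the (v) argument also works for $u\in\{2,3\}$, and the statement does claim the second half of (v) there. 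Your plan to prove only $s_1\in R_{2m}(u)$ when $u\le 3$ therefore falls short of the lemma.

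Finally, $R_1(u)$ and $R_2(u)$ need not contain odd primes when $u>3$: $R_1(5)=R_1(9)=\{2\}$ and $R_2(7)=\{2\}$. So $2$ must also serve as a test vertex, with its adjacency governed by $J(2,S)$. This is exactly what produces the exception $\tau u=+5$ in (iv). Your $r_1/r_2$ mechanism is sound in (iii) and (iv) once this is taken into account, but not elsewhere.
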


\subsection{The structure of groups isospectral to simple classical groups}

\begin{lemma}\cite[Lemma~3.1]{26GSV_arxiv}\label{l:reduction}
Let  $L$ be a simple classical group over a field of odd characteristic $p$, $t(L)\geq 5$ and $\omega(G)=\omega(L)$. If $G$ is not an almost simple group with socle isomorphic to $L$, then $S\leq \overline G=G/K\leq \Aut S$, where $K$ is the solvable radical of $G$ and $S$ is a simple classical group in characteristic $v\neq p$. Furthermore, $K$ is nilpotent and the following hold:
\begin{enumerate}
     \item if $\rho$ is a coclique in $GK(G)$ with $|\rho|\geq3$, then at most one prime in $\rho$ divides $|K|\cdot|\overline{G}/S|$;
    \item if $r\in\pi(G)$ is not adjacent to $2$ in $GK(G)$, then $r$ does not divide  $|K|\cdot|\overline{G}/S|$.
\end{enumerate}
\end{lemma}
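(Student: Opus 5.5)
This lemma is quoted from \cite[Lemma~3.1]{26GSV_arxiv}, so within the present paper it may simply be cited. To reprove it, I would assemble the reduction results listed in the introduction and then derive (i) and (ii) from the structure of $G$.

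\textbf{Step 1: reduce to a classical socle of another characteristic.} Since $t(L)\geq 5\geq 3$, Higman's theorem shows that $G$ is nonsolvable. By \cite[Theorem~7.1]{05VasVd.t} there is an odd prime $r\in\pi(L)$ with $2r\notin\omega(L)$. Hence the main result of \cite{05Vas.t} gives the series \eqref{eq:str}: $K$ is the solvable radical and $S=H/K\leq G/K\leq\Aut S$. If $S\simeq L$, then $G/K$ is an almost simple group isospectral to $L$, and $H$ is a cover of $L$ isospectral to $L$. Recognizability of $L$ among covers \cite[Theorem~3.6]{23Survey} then forces $K=1$, since the exceptions there are unitary, linear or ${}^3D_4(2)$. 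So $G$ would be almost simple with socle $L$, contrary to hypothesis; thus $S\not\simeq L$. The alternating and sporadic cases are excluded by \cite{09VasGrMaz.t,11VasGrSt.t}, the exception $U_5(2)$ being irrelevant here. The exceptional Lie-type cases are excluded by \cite{14VasSt.t}-type arguments, or directly because $t(S)$ is too small. Same-characteristic classical $S$ gives $S\simeq L$ or $S=O_8^-(q)$ with $t(L)\leq4$, by \cite{15VasGr1}. Hence $S$ is classical in a characteristic $v\neq p$.

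\textbf{Step 2: $K$ is nilpotent.} For each prime $s$, let $R=O_s(K)$-type sections be given. Take the primes $r$ with $2r\notin\omega(G)$; there is a whole coclique of such primes of size at least $2$, by Lemma~\ref{l:J(r,L)}. I would apply the standard Hall–Higman/Frobenius lemma: if an element of order $r$ acts fixed-point-freely on an $s$-section of $K$ and a Frobenius subgroup of $S$ also acts, then products $rs$ appear in $\omega(G)$ that are forbidden. This is the familiar argument of \cite{20YanGrVas}. It uses the fact that $S$ contains a Frobenius group with kernel an $r$-group and cyclic complement of order $2$ or a large prime. I expect this to be the main obstacle, because one must check that suitable Frobenius subgroups exist in every classical $S$ with $t(S)\geq5$.

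\textbf{Step 3: items (i) and (ii).} For (ii), suppose $r$ is not adjacent to $2$ and $r$ divides $|K|$. Then an involution of $S$ lifting to $G$ acts on an $r$-subgroup of the nilpotent $K$ with coprime action, and it centralizes something nontrivial unless it inverts. Lifting a Klein four-group or a dihedral subgroup, as in \cite{05Vas.t}, yields $2r\in\omega(G)$, a contradiction. If $r$ divides $|\overline G/S|$, then $r$ is the order of a field automorphism. Such an element centralizes a subfield subgroup of even order, which again gives $2r\in\omega(G)$. For (i), take two primes $r,s$ of a coclique $\rho$ with $|\rho|\geq 3$, both dividing $|K|\cdot|\overline G/S|$. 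Since $K$ is nilpotent, two primes dividing $|K|$ commute, so $rs\in\omega(K)$. If $r$ divides $|K|$ and $s$ divides $|\overline G/S|$, the outer automorphism of order $s$ centralizes a nontrivial element of the $r$-part of $K$, by coprime action on a nontrivial module. If both divide $|\overline G/S|$, the outer automorphism group is metabelian with a cyclic field part, so again $rs$ is realized. The third prime of $\rho$ then lies in $S$, and this is used to exclude degenerate cases.
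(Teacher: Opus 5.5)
The paper gives no proof of its own here. The lemma is quoted from \cite{26GSV_arxiv}, and your Step~1 follows essentially the assembly sketched in the introduction. The problem is that you then try to re-derive (i), (ii) and the nilpotency of $K$ by ad hoc arguments, and these arguments fail.

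\begin{itemize}
\item In (i), only the subcase with both primes in $\pi(K)$ is sound, given nilpotency.
\item The mixed case of (i) rests on the claim that an element of order $s$ acting coprimely on $O_r(K)$ must centralize a nontrivial $r$-element. This is false, because coprime actions can be fixed-point-free (Frobenius actions).
\item The case of (i) where both primes divide $|\overline G/S|$ assumes that a metabelian outer automorphism group contains an element of order $rs$. This also fails. Take $7\mid m$, for example $S=L_{14}(8)$, which is well within the range. The diagonal automorphism of order $7$ and the field automorphism of order $3$, acting by $\delta\mapsto\delta^2$, generate a Frobenius group of order $21$ in $\operatorname{Out}L_m(8)$.
\item In (ii), an odd prime dividing $|\overline G/S|$ need not be the order of a field automorphism; diagonal automorphisms of $L_m^\pm(u)$ also occur.
\item Step~2 is only a sketch with its main obstacle left open, and its starting claim is wrong. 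For $L=S_{2n}(q)$ with $n$ even we have $J(2,L)=\{2n\}$, and every prime of $R_{2n}(q)$ divides $(q^n+1)/2\in\omega(L)$. So the odd primes nonadjacent to $2$ form a clique, not a coclique of size at least $2$.
\end{itemize}

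The missing point is that none of this has to be proved by fixed-point arguments. Items (i) and (ii) are conclusions of the same structure theorem of \cite{05Vas.t} that you already invoke in Step~1; its hypotheses $t(G)\geq 3$ and $t(2,G)\geq 2$ hold here. Nilpotency of $K$ is the main theorem of \cite{20YanGrVas}. These should be quoted.

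Step~1 also has a real hole: an exceptional group of Lie type $S$ in characteristic $v\neq p$.
\begin{itemize}
\item The reference \cite{14VasSt.t} treats the case where $L$ is exceptional, not $S$, so it does not apply.
\item The size argument fails. At this stage \cite{05Vas.t} gives only $t(S)\geq t(L)-1$, while, for instance, $t(E_7(u))$ and $t(E_8(u))$ are at least $8$.
\end{itemize}
Ruling out exceptional $S$ is part of the cross-characteristic analysis described in the introduction. It needs an actual argument or a precise citation, so as written Step~1 does not show that $S$ is classical.

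A smaller point: you need the same-characteristic result of \cite{15VasGr1} for all groups of Lie type in characteristic $p$, not only for classical ones. It does cover them, so citing it for all groups of Lie type in characteristic $p$ fixes this.
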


\begin{lemma}\label{l:rest}
Let $L$ and $S$ be as in Lemma {\rm \ref{l:reduction}}.  Then $6\leq t(L)\leq 13$, $L$ is symplectic or orthogonal, $GK(L)$ is connected and $0\leq t(S)-t(L)\leq 1$.
\end{lemma}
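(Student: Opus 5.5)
The plan is to assemble Lemma~\ref{l:rest} from results already cited in the introduction, proving directly only what is not covered there; every step argues by contraposition. Let $G$ be isospectral to $L$ but not almost simple with socle $L$. Lemma~\ref{l:reduction} then gives the solvable radical $K$ and the group $S$, a simple classical group in characteristic $v\neq p$ with $S\leq G/K\leq\Aut S$.

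\emph{Bounds on $t(L)$ and type of $L$.} Four cited results rule out the remaining cases.
\begin{enumerate}
\item If $t(L)\geq14$, then by \cite{17Sta} no group isospectral to $L$ has a composition factor of Lie type in characteristic different from $p$. This contradicts $v\neq p$.
\item If $t(L)=5$, then $L$ is almost recognizable by \cite{26GSV_arxiv}.
\item If $L$ is linear or unitary with $t(L)\geq6$, the recognition problem is solved in \cite{21Sta.t,26Sta.t}. Every such $G$ is almost simple with socle $L$.
\item Hence $6\leq t(L)\leq13$ and $L$ is symplectic or orthogonal.
\end{enumerate}

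\emph{Connectivity of $GK(L)$.} For symplectic and orthogonal groups with $t(L)\geq6$, the groups with disconnected prime graph are explicitly listed (Williams, Kondrat'ev). These are essentially $S_{2n}(q)$, $O_{2n+1}(q)$ and $O^-_{2n}(q)$ with $n$ a power of $2$ and $q$ odd, where $R_{2n}(q)$ forms a separate component. For these groups quasirecognizability is already known from the literature on groups with disconnected prime graph, so such an $L$ does not admit the $S$ above. If the available reference does not cover the needed range explicitly, I would argue directly. By the Gruenberg--Kegel theorem, $K$ is a $\pi_1$-group and the odd components of $GK(G)$ are components of $GK(S)$. The component $\{r_{2n}(q)\}$ then has elements of order $m_{2n}(q)=(q^n+1)/2$ (Table~\ref{tab:mi}). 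Comparing this with the isolated components of $S$ in characteristic $v$, using Lemma~\ref{l:unique} and the order bound of Lemma~\ref{l:orders}, should force $v=p$, a contradiction.

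\emph{The inequality $0\leq t(S)-t(L)\leq1$.} Staroletov \cite{26Sta.t} gives $t(L)\leq t(S)\leq t(L)+2$ in the range $5\leq t(L)\leq13$, so it remains to exclude $t(S)=t(L)+2$; I expect this to be the main obstacle. The first idea is the following. Take a coclique $\sigma$ of greatest size in $GK(S)$. A coclique of $GK(G)$ of size at least $3$ has at most one prime dividing $|K|\cdot|\overline G/S|$, by Lemma~\ref{l:reduction}(i). So at most a bounded number of pairs in $\sigma$ (essentially one) can become adjacent in $GK(G)=GK(L)$. This would yield a coclique in $GK(L)$ of size about $t(S)-1$, and hence $t(S)\leq t(L)+1$. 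The risk is that this argument only gives $t(G)\geq t(S)-1$ when adjacencies created inside $G$ are controlled. Adjacencies can also come from $r$-elements of $S$ centralizing $s$-elements of $K$, and Lemma~\ref{l:reduction}(ii) handles only primes nonadjacent to $2$. If that gap persists, I would fall back on a size comparison. Via Table~\ref{tab:6tL14}, the condition $t(S)=t(L)+2$ forces $\prk(S)$ to be noticeably larger than $\prk(L)$, while $\omega(S)\subseteq\omega(L)$. I would then compare the largest primitive divisors $k_i(u)$ against $k_j(q)$ using Lemmas~\ref{l:hering} and \ref{l:uneq}, together with the element order bound of Lemma~\ref{l:orders}, to reach a contradiction.
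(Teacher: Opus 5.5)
Your treatment of $6\le t(L)\le 13$, of the type of $L$, and of connectivity matches the paper. The paper likewise just quotes \cite[Theorem~1.2]{17Sta}, \cite[Theorem~1]{26GSV_arxiv}, and, for disconnected $GK(L)$, the known result $S\simeq L$ from \cite[Theorem~2.1]{23Survey}. Rely on that citation rather than on your fallback. Your list of disconnected cases is incomplete: for instance $S_{14}(3)$ and $O_{14}^+(3)$, both with $t(L)=6$, have $R_7(3)=\{1093\}$ as an isolated vertex. So an argument built only around the component $R_{2n}(q)$ with $n$ a power of $2$ would not cover everything.

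The genuine gap is the inequality $t(S)\le t(L)+1$. You correctly reduce it, via Staroletov, to excluding $t(S)=t(L)+2$, but you never exclude it. The paper does not prove this step either; it quotes \cite[Proposition~5.1]{26GSV_arxiv}.

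\emph{Why your first idea fails.} Lemma~\ref{l:reduction}(i) constrains cocliques of $GK(G)$. It says nothing about how many pairs of a coclique $\sigma$ of $GK(S)$ become joined in $GK(G)$. What you can extract is that every joined pair meets $\sigma_0=\sigma\cap\pi(|K|\cdot|\overline G/S|)$, so $\sigma\setminus\sigma_0$ is a coclique of $GK(L)$. Nothing bounds $|\sigma_0|$ by $1$, however. Item (i) only applies to primes of $\sigma_0$ that lie in a common coclique of $GK(G)$ of size at least $3$. Primes of $\sigma_0$ may be adjacent in $GK(G)$ to each other and to the rest of $\sigma$. For example, the primes of $|K|$ other than $v$ lie in $\pi(q^2-1)$ by Lemma~\ref{l:primes}(i), and these are pairwise adjacent in $GK(L)$ by Lemma~\ref{l:half}.

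\emph{Why the fallback is not yet an argument.} Its premise that $\prk(S)$ is noticeably larger than $\prk(L)$ is false in general: $L=O_{20}^+(q)$ has $t(L)=7$, while $S=S_{22}(u)$ has $t(S)=9$. The inclusion $\omega(S)\subseteq\omega(L)$ together with Lemma~\ref{l:orders} only gives an upper bound on $u$ in terms of $q$. A reverse bound requires locating the large primes $r_i(q)$, $i\in J(L)$, inside specific sets $R_j(u)$, and you give no mechanism for that. Even in the case $t(S)=t(L)+1$, the paper needs the extra nontrivial input $2u^3<q^2$ (Lemma~\ref{l:primes}(v), from \cite{26Sta.t}) before such magnitude comparisons become inconsistent.

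As written, the bound $t(S)-t(L)\le 1$ is unproved. You must cite \cite[Proposition~5.1]{26GSV_arxiv} or supply its argument.
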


\begin{proof} We have $t(L)\leq 13$ by \cite[Theorem 1.2]{17Sta}.
The fact that $L$ cannot be linear or unitary follows from \cite[Theorem 1]{26GSV_arxiv}, so $L$ is symplectic or orthogonal. In this case, if $GK(L)$ is disconnected, then it is known that $S\simeq L$
(see \cite[Theorem 2.1]{23Survey}). 
The inequalities $0\leq t(S)-t(L)\leq 1$ hold by \cite[Proposition 5.1]{26GSV_arxiv}.
\end{proof}

\begin{lemma}\label{l:primes}
Let $L$, $\overline G$, $K$, $S$, $p$ and $v$ be as in Lemma {\rm \ref{l:reduction}}. Let $L$ and $S$ be groups over fields of order $q$ and $u$, respectively.
\begin{enumerate}
\item If $r\in\pi(K)$, then $t(r,L)\leq 3$ and $r$ divides $v(q^2-1)$.
\item If $r$ does not divide $p(q^4-1)$ and $r\in\pi(S)$, then $t(r, S)\geq t(r, L)$.
\item If $r\in\pi(L)$ is large with respect to $L$, then $r$ is coprime to $v\cdot|K|\cdot |\overline G/S|$, $r\in\pi(S)$ and  $t(r,S)\geq t(L)$.
\item If $\rho$ is a coclique in $GK(L)$ of size at least $2$ and $\rho\subseteq\pi(S)\setminus\{v\}$, then at most one prime in $\rho$ is not large with respect to $S$.
\item If $t(S)=t(L)+1$, then $2u^3<q^2$.
\end{enumerate}

\end{lemma}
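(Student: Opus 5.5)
The plan is to prove the five items in order, working throughout with the series $1\leq K\leq H\leq G$ of Lemma~\ref{l:reduction}, where $\overline G=G/K$ and $S\leq\overline G\leq\Aut S$. Two facts are used repeatedly:
\begin{itemize}
\item $\omega(S)\subseteq\omega(G)=\omega(L)$, so nonadjacency in $GK(L)$ of two primes of $\pi(S)$ forces their nonadjacency in $GK(S)$;
\item a prime of $\pi(L)$ outside $\pi(S)$ divides $|K|\cdot|\overline G/S|$, so by Lemma~\ref{l:reduction}(i) at most one prime of a coclique of size at least $3$ can escape from $\pi(S)$.
\end{itemize}

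For (i), take $r\in\pi(K)$ and a $\{r\}$-coclique $\rho$ of $GK(L)$ of greatest size, and assume $t(r,L)\geq4$. By Lemma~\ref{l:reduction}(i), every prime of $\rho\setminus\{r\}$ is coprime to $|K|\cdot|\overline G/S|$, so it lies in $\pi(S)$. Pass to the Sylow $r$-subgroup of the nilpotent group $K$ modulo its Frattini subgroup, viewed as a module for a preimage of $S$. The standard Hall--Higman/Frobenius argument (an element of prime order $s\neq r$ acting fixed-point-freely forces the relevant subgroup to be Frobenius or to have cyclic Sylow subgroups) shows that among the three primes of $\rho\setminus\{r\}$ two are adjacent, or one of them is adjacent to $r$. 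This contradiction gives $t(r,L)\leq3$. The second assertion of (i) then follows by reading off $t(r,L)$ from Lemmas~\ref{l:adj_criteria} and~\ref{l:J(p,L)}. Since $\prk(L)\geq7$ here, any prime $r\neq p$ with $e(r,q)\geq3$ is nonadjacent to at least three primes $r_i(q)$ with $i\in E(L)$, so $t(r,L)\geq4$. Hence $r\mid q^2-1$ unless $r=p$. The case $r=p$ is excluded because $p$ is coprime to $|K|$ when $p\neq v$; this requires a separate check that $p$-elements of $K$ would produce too large a $\{p\}$-coclique. What remains is $r\mid v(q^2-1)$.

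For (ii), take a greatest $\{r\}$-coclique $\rho$ of $GK(L)$. Since $r\nmid p(q^4-1)$, item (i) gives $r\notin\pi(K)$, and $t(r,L)\geq4$, so $r\in\pi(S)$. We must show that every other prime of $\rho$ can be taken in $\pi(S)$. At most one $s\in\rho$ divides $|K|\cdot|\overline G/S|$; for such $s$ the plan is to replace it by $v$, or to show that $s$ actually divides $|S|$, using that $s\mid v(q^2-1)$ by (i) and that outer automorphism orders are tiny. Then $\rho\subseteq\pi(S)$ is an $\{r\}$-coclique of $GK(S)$.

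For (iii), a large prime has $t(r,L)=t(L)\geq6>3$, so $r\notin\pi(K)$ by (i). We also have $r\nmid|\overline G/S|$, because $r$ lies in a coclique of size at least $3$ with two other large primes; if needed, apply Lemma~\ref{l:reduction}(ii) instead, since primes from $J(2,L)$ are large. Thus $r\in\pi(S)$. Next, $r\neq v$, since $t(v,S)\leq4<6$ while the coclique argument of (ii) gives $t(r,S)\geq t(L)$. For (iv), if two primes of $\rho$ were not large with respect to $S$, we would combine Lemma~\ref{l:tL}(i) with Table~\ref{tab:tL} for $S$: non-large primes are $r_i(u)$ with $i\notin J(S)$. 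We would then show that two such primes in a common coclique can be completed, using primes $r_i(u)$ with $i\in E(S)$ and $t(S)\leq t(L)+1$, to a coclique in $GK(L)$ of size exceeding $t(L)$.

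Item (v) is the step I expect to be the main obstacle. The plan is to compare prime ranks through Table~\ref{tab:6tL14}, which shows that $t(S)=t(L)+1$ forces $\prk(S)$ to be noticeably larger than $\prk(L)$. By (iii), the large primes $r_{2n}(q)$ of $L$ with $2n\in J'(2,L)$ lie in $S$ and have large $t(\cdot,S)$, so they are $r_j(u)$ with $j\in J(S)$. This gives divisibility $k_{2n}(q)\mid k_j(u)$ with $\varphi(j)$ substantially larger than $\varphi(2n)$. Combining with Lemma~\ref{l:hering}, we get $q^{\varphi(2n)}\lesssim u^{\varphi(j)}$ in the wrong direction, so the upper bound on $u$ must come instead from Lemma~\ref{l:orders}: $S$ contains an element of order roughly $u^{\prk S}$, which must be at most $\frac{q}{q-1}q^{n}$. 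Playing these two estimates against each other over the finitely many pairs of types in Table~\ref{tab:6tL14}, I would try to extract $2u^3<q^2$ directly. I expect some small cases, for instance $u\leq3$, to require the explicit $m_i(q)$ of Table~\ref{tab:mi} rather than crude bounds.
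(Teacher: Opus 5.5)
\paragraph{Main gap: item (v).} The route you describe cannot produce $2u^3<q^2$.

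\begin{itemize}
\item The only upper bound on $u$ in your plan comes from Lemma~\ref{l:orders}, i.e.\ $u^{\prk S}\lesssim\frac{q}{q-1}q^{\prk L}$.
\item The divisibilities $k_i(q)\mid k_j(u)$ obtained from large primes of $L$ bound $u$ from \emph{below}, so combining the two estimates cannot sharpen the upper bound.
\item The condition $t(S)=t(L)+1$ is compatible with $\prk S=\prk L+1$. For example, $L=S_{14}(q)$ and $S=S_{16}(u)$ give only $u^{8}\lesssim q^{7}$, far from $u^3<q^2/2$. This is also why the later treatment of the case $t(S)=t(L)+1$ needs (v) as an independent input.
\end{itemize}

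The missing idea is to use the extra vertex of a greatest coclique of $GK(S)$. For primes coprime to $|K|\cdot|\overline G/S|$, adjacency in $GK(S)$ and in $GK(G)=GK(L)$ coincide. Hence a coclique of $GK(S)$ of size $t(L)+1$ must contain primes dividing $|K|\cdot|\overline G/S|$, and by (i) these are essentially divisors of $(q^2-1)\log_v u$. This is the mechanism behind the result the paper quotes, \cite[Lemmas 4.6 and 4.10]{26Sta.t}: there is $j\in J(S)$ with $k_j(u)\mid p(q^2-1)\log_v u$, where a factor $p$ can be assumed to satisfy $p<31$ or $p\mid\log_v u$. Since $t(S)\geq 7$, Table~\ref{tab:tL} gives $j=5$ or $j\geq 7$, so $\varphi(j)\geq 4$. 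The resulting bound of shape $u^{4}\lesssim q^{2}$ (up to small factors) yields $2u^3<q^2$ via \cite[Lemma 2.9]{26Sta.t}.

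\paragraph{Item (iv).} Your argument runs in the wrong direction. From $\omega(S)\subseteq\omega(L)$ you only get that nonadjacency in $GK(L)$ implies nonadjacency in $GK(S)$. So primes $r_i(u)$ with $i\in E(S)$ cannot be used to enlarge $\rho$ to a coclique of $GK(L)$. The paper's argument is direct:
\begin{itemize}
\item Here $m=\prk S\geq 7$, and by Lemma~\ref{l:half} any two primes of $\pi(S)\setminus\{v\}$ with $\varphi(\cdot,S)\leq m/2$ are adjacent in $GK(S)$, hence in $GK(L)$. So $\rho$ contains at most one of them.
\item Table~\ref{tab:tL} shows that every prime of $\pi(S)\setminus\{v\}$ with $\varphi(\cdot,S)>m/2$ is large with respect to $S$.
\end{itemize}

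\paragraph{Items (i)--(iii).} The paper simply quotes these from \cite[Lemmas 3.3 and 3.5]{26GSV_arxiv}, and your sketches contain concrete errors.
\begin{itemize}
\item In (i), it is false that $e(r,q)\geq 3$ forces $t(r,L)\geq 4$. For $L=S_{14}(q)$, Lemmas~\ref{l:adj_criteria}(ii) and~\ref{l:J(p,L)} show that $r_4(q)$ is nonadjacent only to the primes in $R_7(q)\cup R_{14}(q)$. So $t(r_4(q),L)=3$, and $r\mid v(q^2-1)$ does not follow from $t(r,L)\leq 3$. The case $r=p$ is also left as an unperformed ``separate check''.
\item In (iii), Lemma~\ref{l:reduction}(i) says only that at most one prime of a coclique divides $|K|\cdot|\overline G/S|$, not that this prime differs from $r$. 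Lemma~\ref{l:reduction}(ii) applies only to primes nonadjacent to $2$. So $r\nmid|\overline G/S|$ is unproved. With it goes the inclusion $\rho\subseteq\pi(S)$, for a greatest coclique $\rho\ni r$, that you need for $t(r,S)\geq t(L)$.
\item The same exceptional prime is left unresolved in (ii).
\end{itemize}
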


\begin{proof} By Lemma \ref{l:rest} it follows that $6\leq t(L)\leq 13$ and $L$ is symplectic or orthogonal. Now (i) is \cite[Lemma 3.3]{26GSV_arxiv}, while (ii) and (iii) is \cite[Lemma 3.5]{26GSV_arxiv}.

(iv) Note that $t(S)\geq 6$ by Lemma \ref{l:rest} and so $m=\prk(S)\geq 7$, see Table \ref{tab:6tL14}.  By Lemma \ref{l:half}, if $s,r\in\pi(S)\setminus \{v\}$ and $\varphi(r,S)$, $\varphi(s,S)\leq m/2$, then $rs\in\omega(S)$. So there is at most one prime $r\in \rho$ with $t(r,S)\leq m/2$. Now it is easy to derive from Table \ref{tab:tL} that every $t\in \pi(S)\setminus\{v\}$ with $\varphi(t, S)>m/2$ is large with respect to $S$.

(v)  By \cite[Lemmas 4.6 and 4.10]{26Sta.t}, there is $j\in J(S)$ such that $k_j(u)$ divides $p(q^2-1)\log_vu$. Furthermore, if $p$ divides $k_j(u)$, then we may assume that either $p<31$ or $p$ divides $\log_vu$.
Since $t(S)\geq 7$, it follows from Table \ref{tab:tL} that $j=5$ or $j\geq 7$. Now $2u^3<q^2$ by \cite[Lemma 2.9]{26Sta.t}.
\end{proof}

\begin{lemma}{\em\cite[Lemma~3.5]{15Vas}\label{l:3.5}} Let $L$ be a simple classical group over a field of order~$q$ and characteristic~$p$,
$r\in\pi(L)$, $r^s\in\omega(P)$, where $P$ is a proper parabolic subgroup of $L$, and
$(r,6p(q+1))=1$. If $L$ acts faithfully on a vector space $V$ over the field of characteristic $t$ distinct from~$p$, then $tr^{s}\in\omega(V\rtimes L)$.
\end{lemma}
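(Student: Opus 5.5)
The plan is to realise $tr^s$ by a nonzero vector of $V$ that commutes with an element of order $r^s$. Let $g\in P$ have order $r^s$. If $0\neq v\in C_V(g)$, then $v$ and $g$ commute in $V\rtimes L$, so $vg$ has order $t\cdot r^s$. Thus everything reduces to showing $C_V(g)\neq 0$, and the tool I would use is the standard Frobenius-group fixed-point lemma: if a Frobenius group $F\rtimes C$ acts faithfully on a vector space over a field whose characteristic does not divide $|F|$, then $C_V(C)\neq0$ (in fact $V|_C$ contains a regular module). Here $F$ will be a $p$-subgroup of the unipotent radical and $C=\langle g\rangle$. Since $t\neq p$ and $L$ acts faithfully on $V$, any subgroup of $L$ acts faithfully and coprimely on $V$, so the lemma applies once a suitable Frobenius subgroup is found.

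First I would normalise $g$. Write $P=U\rtimes M$ with $U=O_p(P)$ and $M$ a Levi complement. Since $r\neq p$, Schur--Zassenhaus (or Sylow) lets us conjugate so that $g\in M$. Let $z$ be the power of $g$ of order $r$. Every nontrivial element of $\langle g\rangle$ has $z$ as a power, so if $z$ acts fixed-point-freely on some $g$-invariant subgroup $Q\le U$, then $Q\rtimes\langle g\rangle$ is Frobenius. Hence the task is to find $Q\le U$, normalised by $g$, with $C_Q(z)=1$.

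For $Q$ I would take a $g$-invariant abelian section realised inside $U$, for instance $U/[U,U]$ or a suitable normal subgroup of $U$. Such a $Q$ is a sum of root subgroups on which $M$ acts through the natural modules of its $GL$-type factors, together with their tensor products, duals, and exterior or symmetric squares; in the symplectic and orthogonal case the Levi has the shape $GL_{k_1}(q)\times\dots\times \mathrm{Cl}(q)$. The eigenvalues of $z$ on $Q\otimes\overline{\mathbb F}_q$ are then $r$-th roots of unity of the form $\lambda_i$, $\lambda_i\lambda_j^{\pm1}$, $\lambda_i^{\pm2}$, where the $\lambda_i$ are eigenvalues of $z$ in the natural representation. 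The hypothesis $(r,6p(q+1))=1$ is what rules out accidental eigenvalue $1$. Because $r\neq2$, $\lambda^2=1$ forces $\lambda=1$. Because $r\neq3$, relations of the form $\lambda^3=1$ coming from the $G_2$/triality-type or cube terms are excluded. Because $r\nmid q+1$, unitary-type relations $\lambda^{q+1}=1$ cannot occur nontrivially. I would choose $P$ (replacing it by a maximal parabolic containing it if convenient, and possibly conjugating $g$) so that $z$ lies in a $GL_k(q)$-factor acting on the natural $k$-dimensional piece of $U$ without eigenvalue $1$, and take $Q$ to be that piece.

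The step I expect to be the main obstacle is this choice of $Q$. An arbitrary $r$-element of $M$ can have eigenvalue $1$ on many root subgroups, for instance when $r\mid q-1$ and $z$ is a diagonal element with repeated entries. In that case I would first try to pass to a smaller parabolic, namely the stabiliser of a $z$-invariant totally singular subspace on which $z$ has no fixed vectors, so that the natural module of the corresponding $GL$ factor (or its dual) has $C_Q(z)=1$. If this fails, I would fall back on the sub-case $z$ centralising a nondegenerate piece and handle it by induction on the rank, using the classical group on the fixed space. With $Q$ found, the Frobenius lemma gives $C_V(g)\ne0$, and the element $vg$ above finishes the proof.
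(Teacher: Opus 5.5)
The paper does not prove this statement; it quotes it from \cite[Lemma~3.5]{15Vas}. Taken on its own terms, your proposal has a genuine gap at the step you yourself single out. Moreover, that gap cannot be closed inside your framework: a nontrivial $p$-subgroup $Q$ normalised by $g$ with $C_Q(z)=1$ need not exist.

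Here is a counterexample to the method. Take $L=S_6(q)$ with $q$ odd, and let $r$ be an odd prime divisor of $q^2+1$. Then $r\geq 5$ and $r\nmid q+1$, so $(r,6p(q+1))=1$.

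An element $z$ of order $r$ has $e(r,q)=4$. Hence $z$ fixes pointwise a nondegenerate plane $F$ and acts irreducibly on $F^\perp$. Its only invariant totally isotropic subspaces are therefore the points of $F$. So every proper parabolic containing $g$ (hence $z$) is a point stabiliser $P_1$, and indeed $r\in\omega(P_1)$.

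In $U_1=O_p(P_1)\cong q^{1+4}$, the element $z$ centralises $Z=Z(U_1)=[U_1,U_1]$ and is fixed-point-free on $U_1/Z\cong F^\perp$. Suppose $Q\le U_1$ is $z$-invariant with $C_Q(z)=1$. Then $Q\cap Z=1$, so $[Q,Q]=1$, and $Q$ maps onto a $z$-invariant $\mathbb{F}_p$-subspace $X\subseteq F^\perp$ that is totally isotropic for the symplectic form $B$.

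No such nonzero $X$ exists. For $0\neq x\in X$ we have $B(x,z^ix)=0$ for all $i$, hence $B(x,\mathbb{F}_q[z]x)=0$. But $\mathbb{F}_q[z]x=F^\perp$ by irreducibility, which contradicts nondegeneracy.

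By Borel--Tits, every $p$-subgroup normalised by $z$ lies in such a $U_1$. So here there is no Frobenius group $Q\rtimes\langle g\rangle$ with $p$-kernel at all.

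Your fallback does not rescue this case. The classical group on the fixed space $F$ centralises $z$, so it cannot supply a kernel. And $z$ lies in no proper parabolic of $Sp(F^\perp)$, so induction on rank has nothing to apply to. Configurations like this need an ingredient beyond Frobenius subgroups with $p$-kernel, for instance an argument using the non-abelian structure of $U_1$, on whose centre $z$ acts trivially. Your proposal contains no such ingredient.

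There are two smaller problems.

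First, your reduction to $C_V(g)\neq 0$ fails when $t=r$, which the hypotheses allow. In that case a fixed vector $v$ only gives $|vg|=r^s$, not $r^{s+1}$. What you need is a vector $v$ with $\sum_{i=0}^{r^s-1}g^iv\neq 0$, so that $(vg)^{r^s}\neq 1$. The strong form of the Frobenius lemma (a free $\mathbb{F}\langle g\rangle$-summand of $V$) supplies such a vector; you mention that form but never use it.

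Second, the Frobenius lemma needs an actual subgroup $Q\le L$ as kernel. An abelian section such as $U/[U,U]$ cannot play that role.
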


\section{Proof of Theorem \ref{t:main}: general considerations}\label{s:structure}

Let $L$ be a simple symplectic or orthogonal group  over a field of order $q$ and characteristic $p\neq 2$ such that $6\leq t(L)\leq 13$ and let $G$ be a finite group with $\omega(G)=\omega(L)$. Suppose that $G$ is not an almost simple group with socle isomorphic to $L$. By Lemmas \ref{l:reduction} and \ref{l:rest},  we have $S\leq \overline G=G/K\leq \Aut S$, where $K$ is the solvable radical of $G$ and $S$ is a simple classical group over a field of order $u$ and characteristic $v\neq p$ with $t(S)\in\{t(L), t(L)+1\}$. Also the prime graph of $GK(L)$ is connected, in particular, $L$ is different from $S_{2n}(q)$, $O_{2n+1}(q)$, $O_{2n}^-(q)$, where $n=8,16$.

Let $i\in J(L)$ and $r\in R_i(q)$. In view of Lemma \ref{l:primes}(iii), we know  that $r\in\pi(S)$ and $k_i(q)\in\omega(S)$ but it is not necessary true that $e(r, u)$ is uniquely determined by $i$, even if $r$ is large with respect to $S$. To handle this, we introduce the following notation. If $i=\{i_1,i_2\}$, where $i_1,i_2\geq 3$ are distinct integers, and $a$ is an integer with $|a|>1$, then set $R_{i}(a)=R_{i_1}(a)\cup R_{i_2}(a)$ and  $k_i(a)=k_{i_1}(a)k_{i_2}(a)$.

\begin{lemma}\label{l:trans}
Let $i\in E(L)$ or $i=\{i_1,i_2\}$, where  $i_1,i_2\in J(L)\setminus E(L)$. Suppose that every $r\in R_i(q)$ is large with respect to $S$. Then
$R_i(q)\subseteq R_j(u)$ and $k_i(q)$ divides $k_j(u)$, where either $j\in E(S)$ or $j=\{j_1,j_2\}$ for some  $j_1,j_2\in J(S)\setminus E(S)$.
\end{lemma}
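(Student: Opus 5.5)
Throughout, the primes in $R_i(q)$ are large with respect to $S$. I will first show that they all lie in a single $R_j(u)$ (or in a pair $R_{j_1}(u)\cup R_{j_2}(u)$), and then obtain the divisibility of $k_j(u)$ by $k_i(q)$ by comparing spectra.

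Take $r\in R_i(q)$. Since $r$ is large with respect to $S$, we have $t(r,S)=t(S)\geq 6$. Hence $r\neq v$, because $t(v,S)\leq 4$ by Lemma~\ref{l:J(p,L)}. So $r$ lies in a coclique of greatest size of $GK(S)$. By Lemma~\ref{l:tL}(i) this gives $e(r,u)\in J(S)$. Put $j=e(r,u)$.

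If $j\in E(S)$, I will show that every $r'\in R_i(q)$ also has $e(r',u)=j$. The main tool is the adjacency criterion of Lemma~\ref{l:adj_criteria}, together with the fact that primes with the same $e$-value have identical neighbourhoods in $GK(S)$. In $GK(L)$ any two primes of $R_i(q)$ have the same neighbourhood, since adjacency there depends only on $e(\cdot,q)$. Take distinct large primes $r,r'$ with $e(r,u)\neq e(r',u)$ both in $J(S)$. From Table~\ref{tab:tL} the elements of $J(S)$ are pairwise nonadjacent indices, except possibly pairs inside $J(S)\setminus E(S)$. Choose a greatest coclique $\sigma$ of $GK(S)$ through $r$ and compare it with one through $r'$. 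This produces a prime $s\in\pi(S)$ adjacent to exactly one of $r,r'$ in $GK(S)$, namely $s=r_{j'}(u)$ with $j'=e(r',u)$. That contradicts the equality of neighbourhoods in $GK(G)=GK(L)$, unless both $e$-values lie in $J(S)\setminus E(S)$. This yields $R_i(q)\subseteq R_j(u)$ with $j\in E(S)$, or $R_i(q)\subseteq R_{j_1}(u)\cup R_{j_2}(u)$ with $j_1,j_2\in J(S)\setminus E(S)$.

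Next I must match the case distinction in the statement. For $i\in E(L)$ and $j$ a pair, I enlarge $j$ to a pair from $J(S)\setminus E(S)$; this is allowed by the convention for $R_j$. For $i=\{i_1,i_2\}$ the argument is the same, except that I first note that $r_{i_1}(q)$ and $r_{i_2}(q)$ are interchangeable inside greatest cocliques of $GK(L)$ (Lemma~\ref{l:tL}(i)). Their images are therefore interchangeable in $GK(S)$, which forces either a common $j\in E(S)$ or a pair.

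For divisibility, note that $k_i(q)\in\omega(L)=\omega(G)$ by Lemma~\ref{l:primes}(iii). Every prime of $k_i(q)$ is coprime to $|K|\cdot|\overline G/S|$, so an element of order $k_i(q)$ maps to an element of $S$ of the same order. In $S$, a semisimple element whose order is divisible only by primes in $R_j(u)$ lies in a torus. The $R_j(u)$-part of any element order of $S$ divides $k_j(u)$, because the order of the maximal tori component at a primitive index is $\Phi_j(u)$ up to the small factor removed in Lemma~\ref{l:kn}. If $j$ is a pair, this holds for the product. Hence $k_i(q)$ divides $k_j(u)$.

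I expect the main obstacle to be the transfer step in the $E(S)$ versus $J(S)\setminus E(S)$ bookkeeping, especially the exceptional rows of Table~\ref{tab:tL}, such as $n\equiv 3\pmod 4$ with three extra indices. There I would check directly from Table~\ref{tab:tL} and Lemma~\ref{l:adj_criteria} that two distinct indices of $J(S)$ have different neighbourhoods unless both lie in $J(S)\setminus E(S)$.
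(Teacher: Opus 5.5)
\textbf{There is a genuine gap in your central step}, the claim that $R_i(q)$ lies in a single $R_j(u)$ or in a pair.

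\textbf{The neighbourhood comparison does not work.} You produce $s=r_{j'}(u)$ adjacent to $r'$ but not to $r$ in $GK(S)$. This contradicts nothing about $GK(G)$, because adjacency in $GK(G)$ need not descend to $GK(S)$. It descends only if $s$ is coprime to $|K|\cdot|\overline G/S|$. For $s$ you know only that it is large with respect to $S$, not with respect to $L$, so Lemma~\ref{l:primes}(iii) does not apply. Also $R_{j'}(u)$ may equal $\{r'\}$, so no witness $s\neq r'$ need exist.

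The fix is the direct argument of the paper. The primes of $R_i(q)$ are themselves pairwise adjacent in $GK(S)$: they are coprime to $|K|\cdot|\overline G/S|$, and $k_i(q)\in\omega(G)$. You already use this coprimality, but only in your divisibility paragraph. Since $r_j(u)$ with $j\in E(S)$ is nonadjacent to every $r_l(u)$ with $l\in J(S)\setminus\{j\}$, this forces $e(r',u)=j$ for all $r'\in R_i(q)$. In the pair case $i=\{i_1,i_2\}$ you additionally need $r_{i_1}(q)r_{i_2}(q)\in\omega(L)$. This follows from Lemma~\ref{l:tL}(i): otherwise $E(L)\cup\{i_1,i_2\}$ would index a coclique of size $t(L)+1$. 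Your ``interchangeability'' of $r_{i_1}(q)$ and $r_{i_2}(q)$ does not by itself force a common $j$ or a pair.

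\textbf{You never exclude three distinct values of $e(r',u)$ in $J(S)\setminus E(S)$.} This is possible a priori when $|J(S)\setminus E(S)|=3$, for example for $S_{2m}(u)$ with $m\equiv 3\pmod 4$ or $O_{2m}^-(u)$ with $m\equiv 2\pmod 4$. The check you plan, that distinct indices of $J(S)$ have different neighbourhoods, cannot settle this. Indices in $J(S)\setminus E(S)$ are pairwise adjacent, again by Lemma~\ref{l:tL}(i), so no pairwise adjacency information rules out three values. The paper argues instead that three values would give $r_{j_1}r_{j_2}r_{j_3}\in\omega(S)$, since these primes come from one element order (for a single index $i$, namely $k_i(q)\in\omega(S)$). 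This is then excluded using the description of $\omega(S)$; an argument of this kind, not a pairwise check, is what is missing.

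\textbf{Your divisibility step is fine in substance.} It amounts to the exponent of a Sylow $s$-subgroup of $S$, for $s\in R_j(u)$ with $j\in J(S)$, being $(k_j(u))_s$. In the pair case, however, $k_i(q)$ need not itself be an element order, so argue prime by prime.
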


\begin{proof}
Let $r\in R_i(q)$ and $j=e(r,u)$. If $j\in E(S)$, then $e(s,u)=j$   for every $s\in R_i(q)$ since elements of $R_j(u)$ are not adjacent to $r_l(u)$ with $l\in J(S)\setminus \{j\}$, and so $R_i(q)\subseteq R_j(u)$ and $k_i(q)$ divides $k_j(u)$. According to Table~\ref{tab:tL},
we see that $|J(S)\setminus E(S)|\leq 3$.
Suppose that $j\in J(S)\setminus E(S)$. Then $e(s,u)\in J(S)\setminus E(S)$ for every $s\in R_i(q)$ since elements of $R_j(u)$ are not adjacent to $r_l(u)$ with $l\in E(S)$. If $\{e(s,u)\mid s\in R_i(q)\}=\{j_1,j_2,j_3\}$, then $r_{j_1}r_{j_2}r_{j_3}\in \omega(S)$ for some $r_{j_l}\in R_{j_l}(u)$. Using the description of $\omega(S)$, it is not hard to verify that this is not the case.
\end{proof}

\begin{lemma}\label{l:R8}
$R_8(q)\cap (\pi(\overline G/S)\cup\{v\})=\varnothing$.
\end{lemma}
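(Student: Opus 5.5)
The plan is to fix $r\in R_8(q)$ and exclude $r=v$ and $r\in\pi(\overline G/S)$ separately. In both cases the contradiction comes from comparing the neighbourhood of $r$ in $GK(L)=GK(G)$ with what the structure of $S$ forces. We have $r\nmid q^4-1$, $r\ne 2,p$, and $r\ge 17$ since $8\mid r-1$. By Lemma~\ref{l:rest}, $\prk(L)=n\ge 7$ and $GK(L)$ is connected, so the cases $n=8,16$ with disconnected graph are already gone. The basic tool is Lemma~\ref{l:adj_criteria}: a prime $s\in R_i(q)$ with $\eta(i)>n-4$ and $i/8$ not an odd integer is not adjacent to $r$ in $GK(L)$. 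Reading Table~\ref{tab:tL} for each type, I expect to find at least four, and typically more, indices $i\in E(L)$ of this kind. This gives a coclique $\{r\}\cup\rho'$ with $|\rho'|\ge 4$ and $\rho'$ made of primes large with respect to $L$. By Lemma~\ref{l:primes}(iii), every prime of $\rho'$ lies in $\pi(S)$, is coprime to $v|K||\overline G/S|$, and is large with respect to $S$.

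\emph{Case $r=v$.} Here $\{v\}\cup\rho'$ is a $\{v\}$-coclique of $GK(S)$, since $\omega(S)\subseteq\omega(G)=\omega(L)$ and all these primes lie in $\pi(S)$. By Lemmas~\ref{l:J(r,L)} and~\ref{l:J(p,L)} applied to $S$, we have $t(v,S)\le 4$. So it suffices that $|\rho'|\ge 4$, which is exactly what the table check should deliver. If for some small case only $|\rho'|=3$ is available, I would refine the argument. The primes of $\rho'$ are pairwise nonadjacent $v'$-primes that are not adjacent to $v$, so by Lemma~\ref{l:J(r,L)} their $e(\cdot,u)$ lie in the set $J(v,S)$ of size at most $3$. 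That set is listed explicitly in Lemma~\ref{l:J(p,L)}, and its elements have mutually disjoint neighbourhoods by Lemma~\ref{l:disjoint}. This rigid structure should contradict the fact that the $r_i(q)$, $i\in E(L)$, have common neighbours in $GK(L)$.

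\emph{Case $r\mid|\overline G/S|$.} Since $r\ge 17$, $r$ does not divide the orders of the diagonal or graph automorphism groups. Hence $\overline G$ contains a field automorphism $\varphi$ of order $r$, so $u=u_0^r$, and $C_S(\varphi)$ contains a copy $S_0$ of the group of the same type as $S$ over the field of order $u_0$. Then $r\cdot\omega(S_0)\subseteq\omega(G)$, so every prime of $\pi(S_0)$ is adjacent to $r$ in $GK(L)$. Now $S_0$ has the same Lie rank as $S$, so $t(S_0)=t(S)\ge t(L)$ for $u_0>2$ (small $u_0$ handled via the exceptions in Table~\ref{tab:tL}). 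Thus the neighbourhood of $r$ in $GK(L)$ would contain a coclique of size at least $t(L)$ (or $t(L)-1$ after discarding one exceptional prime). On the other side, the neighbours of $r_8(q)$ in $GK(L)$ are primes $s$ with $\eta(e(s,q))\le n-4$, or with $e(s,q)/8$ odd, or $s\in\{2,p\}$. The largest coclique among these should have size roughly $t$ of a group of rank $n-4$ plus a bounded number of extra primes, which is strictly less than $t(L)-1$ for $n\ge 7$ according to Table~\ref{tab:tL}.

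The main obstacle is this last count. One must bound uniformly the size of a coclique inside the neighbourhood of $r_8(q)$ for all types and all $n$ in the range $6\le t(L)\le 13$, taking care of the extra neighbours in $R_{8k}(q)$ with $k$ odd, of $2$ and $p$, and of the small values of $u_0$ where $t(S_0)$ drops. I would first try to settle this with a crude bound: at most $t(L)-3$ primes from indices with $\eta\le n-4$, plus at most two exceptional ones. If that is not tight enough, I would instead compare specific primes. Take $s\in R_{j}(u_0)$ for $j$ the maximal index of $E(S)$. Lemma~\ref{l:trans} ties $s$ back to some $R_i(q)$ with $\eta(i)$ large, and such $s$ is nonadjacent to $r$ in $GK(L)$, contradicting $rs\in\omega(G)$.
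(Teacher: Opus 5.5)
Your case $r=v$ is correct and is essentially the paper's argument. At least four large primes $r_i(q)$ with $\eta(i)>n-4$, together with $v$, give a $\{v\}$-coclique of $GK(S)$ of size at least $5$, which contradicts $t(v,S)\le 4$; the paper routes the same idea through Lemma~\ref{l:primes}(ii).

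The case $r\in\pi(\overline G/S)$ has a genuine gap. From $\langle\varphi\rangle\times S_0\le\overline G$ you correctly get that every prime of $\pi(S_0)$ is adjacent to $r$ in $GK(L)$. However, $\omega(S_0)\subseteq\omega(G)$ carries only \emph{edges} of $GK(S_0)$ into $GK(G)$, never non-edges. Primes that are nonadjacent in $GK(S_0)$, or even in $GK(S)$, can be adjacent in $GK(G)$ because of $K$ and $\overline G\setminus S$. So you cannot conclude that the neighbourhood of $r$ in $GK(L)$ contains a coclique of size near $t(S_0)$, and your planned count has nothing on its left-hand side. The only pairwise nonadjacent primes of $GK(G)$ you control are the large primes of $L$. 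At least four of them are nonadjacent to $r$, so they lie outside $\pi(C_S(\varphi))\supseteq\pi(S_0)$. Your fallback has the same defect. Lemma~\ref{l:trans} gives $R_i(q)\subseteq R_j(u)$, not the reverse. With $u=u_0^r$ and $r\nmid j$, a prime of $R_j(u)$ lies in $R_j(u_0)$ or in $R_{jr}(u_0)$, so a given $s\in R_j(u_0)$ need not be any $r_i(q)$ with $i\in J(L)$. Separately, $r\ge 17$ does not exclude diagonal automorphisms. For $S=L^\tau_{17}(u)$ with $17\mid u-\tau$, the prime $r=17$ can divide $|\Inndiag(S)/S|$. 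You need $\overline G\cap\Inndiag(S)=S$ from \cite{22GreVas} before passing to a field automorphism.

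The missing idea is to use precisely the primes outside $\pi(S_0)$.
\begin{itemize}
\item The cyclotomic factors of $|S|$ are $\Phi_k(\pm u)=\Phi_k(\pm u_0)\Phi_{kr}(\pm u_0)$, or $\Phi_{kr}(\pm u_0)$ if $r\mid k$. Those of $|S_0|$ are the $\Phi_k(\pm u_0)$.
\item Hence any $s\in\pi(S)\setminus(\pi(S_0)\cup\{r\})$ divides some $\Phi_{kr}(\pm u_0)$. Then $r\mid e(s,u_0)$, so $s\equiv 1\pmod r$.
\item Take a large index $i$ with $r_i(q)$ nonadjacent to $r_8(q)$. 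All primes of $R_i(q)$ lie in $\pi(S)$ by Lemma~\ref{l:primes}(iii), but outside $\pi(S_0)$.
\item Therefore $k_i(q)\equiv 1\pmod r$, and $r$ divides $(k_8(q),k_i(q)-1)$.
\end{itemize}
This is what the paper imports as \cite[Lemma~4.3]{26Sta.t}: for a coclique $\{r_i(q),r_j(q),r_8(q)\}$, $r$ divides $k_i(q)-1$ or $k_j(q)-1$. After that the proof is arithmetic. For each $n$ the paper picks $i,j$ (for example $15,30$; $28$; $\pm 11,\pm 13$; down to $5,7$). It then shows by polynomial gcd computations that these gcds with $k_8(q)$ are $1$ or have no common prime. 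Without this congruence step, your coclique count cannot produce a contradiction.
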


\begin{proof}

First, assume that $v\in R_8(q)$. Then $r_8(q)$ is small with respect to $L$ by Lemma~\ref{l:primes}(iii), and hence $n=\prk L\geq 9$ according to Table~\ref{tab:tL}. Now it is easy to see that $r_8(q)$ is not adjacent in $GK(L)$ to at least four primes $r_j(q)$ that are large with respect to $L$ and have $\eta(j)\in\{n,n-1,n-2,n-3\}$. It follows from Lemma~\ref{l:primes}(ii) that $t(v,S)\geq 5$, a contradiction with Lemma~\ref{l:J(p,L)}.

Assume that $r\in R_8(q)\cap \pi(\overline G/S)$. Suppose that there exist integers $i$ and $j$ such that $\{r_i(q), r_j(q), r_8(q)\}$
is a coclique of size $3$ in $GK(L)$. By \cite[Lemma 4.3]{26Sta.t}, it follows that $r$ divides either $k_i(q)-1$ or $k_j(q)-1$. In particular, we will get an immediate contradiction if we find $i$ and $j$ such that $(k_8(q), k_i(q)-1)=(k_8(q), k_j(q)-1)=1$.

If $16\leq n\leq 18$, then we can take $i=15$ and $j=30$, while if $n=15$, then we can take $i=28$ and $j=15$ or $30$ depending on $L$. Indeed,
we see that $\{r, r_i(q), r_j(q)\}$ is a coclique of size 3 in $GK(L)$ by Lemma \ref{l:adj_criteria}.
Using Lemma~\ref{l:kn}, we find that $k_{i}(q)=\Phi_i(q)$ for $i=15,30,28$. Also $\Phi_{15}(x)-1=x(x^4-1)(x^3-x^2+1)$, and therefore $(\Phi_{15}(q)-1,q^4+1)=2(q^3-q^2+1,q^4+1)=2(q^3-q^2+1,q-1)=2$, which yields $(\Phi_{15}(q)-1,(q^4+1)/2)=(\Phi_{30}(q)-1,(q^4+1)/2)=1$. Similarly, we see that $\Phi_{28}(x)-1=x^2(x^{12}-1)/(x^2+1)$ and $(q^{12}-1,q^4+1)=2$. So $(\Phi_{28}(q)-1,k_8(q))=1$. Therefore, we can assume that $n\leq 14$.

If $i$ is an odd prime and $(q-\epsilon, i)=1$, then $k_{i^l}(\epsilon q)-1=\Phi_{i^l}(\epsilon{q})-1$ divides $q^{i^{l-1}}(q^{i^{l-1}(i-1)}-1)$ and hence $k_{i^l}(\epsilon q)-1$ is coprime to $k_8(q)$ whenever $8$ does not divide $i-1$.

Let $n=13, 14$. We choose $\epsilon, \tau\in \{+,-\}$ such that $(q-\varepsilon, 11)=(q-\tau, 13)=1$. Then $\{r_{11}(\epsilon q), r_{13}(\tau q), r_8(q)\}$ is a coclique in $GK(L)$, and so we are done by the above argument, unless $L=O_{26}^\varepsilon(q)$ and $(13, q-\varepsilon)=13$. In this case we take $k_i(q)$, $k_j(q)$ to be first $k_{13}(\varepsilon q), k_{11}(q)$, then $k_{13}(\varepsilon q), k_{22}(q)$, and finally $k_{22}(q), k_{11}(q)$.
We may assume that $r$ does not divide $k_{11}(\epsilon q)-1$ and, therefore,  $r$ divides both $k_{13}(\varepsilon q)-1=(\Phi_{13}(\varepsilon q)-13)/13$ and $k_{11}(-\epsilon q)-1=(\Phi_{11}(\epsilon q)-11)/11$. Using the extended Euclidean algorithm for polynomials over rational numbers, we check that $(\Phi_{13}(\pm q)-13, (q^4+1)/2)$ divides $3697$ and $(\Phi_{11}(\pm q)-11, (q^4+1)/2)$ divides $569$. So $r$ divides  $(3697,569)=1$, a contradiction.

If $n=11,12$, then we repeat the above argument with $9$ and $11$ in place of $11$ and $13$, which left us with the case when $L=O_{22}^\varepsilon(q)$,  $(11, q-\varepsilon)=11$ and $r$ divides both $\Phi_{11}(\varepsilon q)-11$ and $\Phi_9(\epsilon q)-3$ for some $\epsilon\in\{+,-\}$. Since $(\Phi_{11}(\pm q)-11, (q^4+1)/2)$ divides $569$ and $(\Phi_9(\pm q)-3, (q^4+1)/2)$ divides 17, this is a contradiction.

If $n=9,10$ or $n=7,8$, then we argue in a similar manner with $7$ and $9$, or $5$ and $7$, respectively. It remains to observe that $(\Phi_7(\pm q)-7, k_8(q))$ divides 1201 and $(\Phi_5(\pm q)-5, k_8(q))$ divides 97.
\end{proof}

\begin{lemma}\label{l:lm_even} Let $S=L_m^\tau(u)$, where $12\leq m\leq 28$ is even. Suppose that $R_j(q)\not\subseteq R_m(\tau u)$ for any $j\in J'(p,L)$ and let $i\in J'(2,L)$. Then $u-\tau$ divides $m$, $m-1$ is not prime, $m_i(q)=(u^{m-1}-\tau)/(u-\tau)$ is not divisible by $3$, and $k_i(q)=k_{m-1}(\tau u)$.
\end{lemma}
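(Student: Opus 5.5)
We work in the setting of Section~\ref{s:structure}: $\omega(G)=\omega(L)$, $S\le\overline G\le\Aut S$, and $S=L_m^\tau(u)$ with $m$ even. The idea is to exploit the rigid pairs of primes with disjoint neighborhoods coming from $J'(p,L)$, and to use Lemma~\ref{l:unique}(i) to pin down where these primes sit in $S$. The hypothesis forbids them from landing in $R_m(\tau u)$, so they are forced into $R_{m-1}(\tau u)$. Everything else then follows by matching the unique maximal element orders on the two sides.

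\textbf{Step 1: a disjoint pair in $L$.} By Lemma~\ref{l:disjoint}(i),(ii), there is a pair of primes with disjoint neighborhoods in $GK(L)$. Take $r_i(q)$ with $i\in J'(2,L)$, paired with $r_j(q)$, where either $J(p,L)=\{i,j\}$ or $j\in\{n-1,2n-2\}$ in the even case. These primes are large with respect to $L$, or at least not adjacent to~$2$. So Lemmas~\ref{l:reduction}(ii) and~\ref{l:primes}(iii) show that they lie in $\pi(S)\setminus\{v\}$ and are coprime to $|K|\cdot|\overline G/S|$. Any element of $G$ of order $r_i(q)\cdot s$ then projects to an element of $S$ of the same order, up to factors coming from $K$ and $\overline G/S$, and these factors are handled by Lemma~\ref{l:reduction}(i). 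It follows that $r_i(q)$ and $r_j(q)$ also have disjoint neighborhoods in $GK(S)$.

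\textbf{Step 2: locating the pair in $S$.} Apply Lemma~\ref{l:unique}(i); it requires $u>3$, and small $u$ would be excluded separately by an order-size comparison. The lemma puts one of $r_i(q),r_j(q)$ in $R_{m-1}(\tau u)$. If $u-\tau\nmid m$, it puts the other in $R_m(\tau u)$, which contradicts the hypothesis. Hence $u-\tau$ divides $m$. Running the same argument over all $R_j(q)$ with $j\in J'(p,L)$, the hypothesis shows that $r_i(q)\in R_{m-1}(\tau u)$ for every choice of $r_i(q)$, so $R_i(q)\subseteq R_{m-1}(\tau u)$.

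\textbf{Step 3: equating maximal orders.} By Lemma~\ref{l:disjoint}(iii), $m_i(q)$ is the unique element of $\mu(L)=\mu(G)$ divisible by $r_i(q)$. In $S=L_m^\tau(u)$, the unique maximal order divisible by $r_{m-1}(\tau u)$ is $(u^{m-1}-\tau)/(u-\tau)$, by Table~\ref{tab:mi} together with the divisibility $u-\tau\mid m$. This uses that $(m,u-\tau)=u-\tau$, so the factor $(u-\tau)$ cancels in $(u^{m-1}-\tau)/(m,u-\tau)$.

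We must also rule out contributions from $K$ and from outer automorphisms. Lemma~\ref{l:primes}(i) and Lemma~\ref{l:R8}-type arguments give $r_i(q)\nmid|K|\cdot|\overline G/S|$. Then, using Lemma~\ref{l:3.5}, we show that $K$ acts on elements of order $r_i(q)$ without a fixed point product; otherwise $tr_i(q)\in\omega(G)$ for some $t\in\pi(K)$, contradicting the uniqueness of $m_i(q)$. Comparing the unique maximal elements in each direction yields $m_i(q)=(u^{m-1}-\tau)/(u-\tau)$.

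\textbf{Step 4: the remaining conclusions.} Divisibility by $3$ is excluded as follows. If $3\mid m_i(q)$, then $3$ is adjacent to $r_i(q)$, yet $m_i(q)=(q^n+1)/2$ or a similar value from Table~\ref{tab:mi}. Since $q$ is odd, $3\mid q^n+1$ forces $3$ into an $R$-set that conflicts with the equality of $3$-parts in Lemma~\ref{l:r-part}(iii) applied to $u^{m-1}-\tau$. We will likely need a case split on $3\mid u-\tau$.

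If $m-1$ were prime, then $k_{m-1}(\tau u)=(u^{m-1}-\tau)/((u-\tau)(m-1,u-\tau))$. Together with $R_i(q)\subseteq R_{m-1}(\tau u)$, this yields the inequality forbidden by Lemma~\ref{l:tight}(ii),(iii), which we will apply after checking which $i$ occurs. Finally, $k_i(q)=k_{m-1}(\tau u)$: the divisibility $k_i(q)\mid k_{m-1}(\tau u)$ holds by Step~2. For the reverse, every prime of $k_{m-1}(\tau u)$ divides $m_i(q)$ and is primitive for the appropriate exponent, so it lies in $R_i(q)$; the $r$-parts then agree by Lemma~\ref{l:r-part}(ii).

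\textbf{Main obstacle.} I expect the main obstacle to be Step~3: transferring the equality of \emph{maximal} orders through $K$ and $\overline G/S$, and showing that no larger order appears in $S$ or in $\overline G$.
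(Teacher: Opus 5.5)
Your outline (a disjoint pair, Lemma~\ref{l:unique}(i), then a comparison of maximal orders) points the right way, but the two load-bearing steps are not justified.

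First, in Step~2, Lemma~\ref{l:unique}(i) cannot give $R_i(q)\subseteq R_{m-1}(\tau u)$. It holds only up to renumbering, so when $u-\tau\mid m$ it puts just \emph{one} of the two primes into $R_{m-1}(\tau u)$, and that one may be the partner. The localization comes instead from nonadjacency to $2$. By Lemma~\ref{l:reduction}(ii), $r_i(q)$ is coprime to $|K|\cdot|\overline G/S|$, so it lies in $\pi(S)$ and is nonadjacent to $2$ in $GK(S)$. Since $J(2,S)\subseteq J(v,S)$, this gives $R_i(q)\subseteq R_m(\tau u)\cup R_{m-1}(\tau u)$. As $k_i(q)\in\omega(S)$ and $r_m(\tau u)$, $r_{m-1}(\tau u)$ are nonadjacent, the hypothesis forces $k_i(q)\mid k_{m-1}(\tau u)$, so $(u^{m-1}-\tau)/(m,u-\tau)$ divides $m_i(q)$.

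Only then is Lemma~\ref{l:unique}(i) applied, to $r_i(q)$ and a partner $r_j\notin R_m(\tau u)$. If $J(p,L)=\{2n,2n-2,n-1\}$, your hypothesis says nothing about $j\notin J'(p,L)$; instead one uses that $r_{n-1}(q)$ and $r_{2n-2}(q)$ are nonadjacent, so they cannot both lie in $R_m(\tau u)$. This yields $r_j\in R_k(\tau u)$ with $3\le k<m-1$, and $u\le 3$ or $u-\tau\mid m$.

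Second, and this is the real missing idea: Step~3 needs \emph{every} prime of $m_i(q)$, not just $r_i(q)$, to be coprime to $|K|\cdot|\overline G/S|$. Your use of Lemma~\ref{l:3.5} on $r_i(q)$ is misdirected. The relation $tr_i(q)\in\omega(G)$ does not contradict the uniqueness of $m_i(q)$; it only says $t\mid m_i(q)$, which is exactly what must be excluded. Moreover, for $\tau=-$ the prime $r_{m-1}(-u)$ divides the order of no proper parabolic subgroup. The paper applies Lemma~\ref{l:3.5} to the partner $r_j$ instead. That prime lies in a proper parabolic subgroup and is adjacent to $v$ and $3$ in $GK(S)$, so $r_j$ is adjacent to every prime in $\pi(K)\cup\{3\}$. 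Since $r_i(q)$ and $r_j$ have disjoint neighbourhoods, the odd number $m_i(q)$ is coprime to $3\cdot|K|$. As $\log_vu\le 3$, \cite[Theorem~1]{22GreVas} gives $\pi(\overline G/S)\subseteq\{2,3\}$. Hence $m_i(q)\in\omega(S)$ and $m_i(q)=(u^{m-1}-\tau)/(m,u-\tau)$.

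This is also how small $u$ is disposed of, not by an unspecified size comparison; no a priori inequality between $q$ and $u$ is available in this generality. If $u-\tau\nmid m$, then $\tau u\in\{-2,-3\}$, and $m_i(q)$ would be $2^{m-1}+1$, which is divisible by $3$, or $(3^{m-1}+1)/2$, which is even. So $3\nmid m_i(q)$ is a structural fact coming from the adjacency of $3$ to $r_j$. Lemma~\ref{l:r-part}(iii) cannot supply it, since $2^{m-1}+1$ genuinely is divisible by $3$.

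Finally, Lemma~\ref{l:tight} is irrelevant to the last two claims. It compares matched indices for symplectic and orthogonal $S$, whereas here, for example, $k_7(\pm q)$ may divide $k_{13}(\tau u)$. Your reverse divisibility $k_{m-1}(\tau u)\mid k_i(q)$ is only asserted. What is needed is the connectivity of $GK(L)$, which gives a prime $r\in\pi(m_i(q))\setminus R_i(q)$. Every such $r$ has $\varphi(r,L)\le n/3$, so it is small. It is therefore adjacent to some large $r_l(q)$ lying with $r_i(q)$ in a coclique of size $t(L)$. Then $rr_l(q)\in\omega(S)$ while $r_l(q)\nmid m_i(q)$, so $r\notin R_{m-1}(\tau u)$. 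Hence $k_i(q)=k_{m-1}(\tau u)$. If $m-1$ were prime, then $k_{m-1}(\tau u)=(u^{m-1}-\tau)/(u-\tau)=m_i(q)$, contradicting the existence of $r$.
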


\begin{proof}
By Lemma \ref{l:reduction}(ii), it follows that $e(r_i(q),u)\in J(2,S)\subseteq J(v,S)=\{\tau m,\tau(m-1)\}$, where by $-k$ we mean $\nu(k)$. Since $k_i(q)\in\omega(S)$ and $k_i(q)$ does not divide $k_m(\tau u)$ by assumption,  $k_i(q)$ divides $k_{m-1}(\tau u)$. Then $(u^{m-1}-\tau)/(m,u-\tau)$ divides $m_i(q)$ by Lemma \ref{l:disjoint}(iii).

If $|J(p,L)|=2$, then let $j\in J(p,L)\setminus \{i\}$. If $|J(p,L)|\neq 2$, then there is $j\in\{n-1, 2n-2\}$, such that $r_j\not\in R_m(\tau u)$. So by Lemma \ref{l:disjoint}, in either case, we have $j\in J(L)$ such that $r_i(q)$ and $r_j=r_j(q)$ have disjoint neighborhoods in $GK(S)$ and $r_j\in R_k(\tau u)$ for some $k<m-1$ (recall that $r_j\neq v$ by Lemma \ref{l:primes}).

By Lemma \ref{l:unique}(i), it follows that $u=2,3$ or $u-\tau$ divides $m$. In particular, it is easy to see that $\log_vu \leq 3$. Since $\overline G\cap \Inndiag S=1$ by \cite[Theorem 1]{22GreVas}, it follows that $\pi(\overline G/S)\subseteq\{2,3\}$.

We claim that $\pi(m_i(q))\cap (\pi(\overline G/S)\cup\pi(K))=\varnothing$. As, by definition, $m_i(q)$ is odd, it sufficient to show that $r_j$ is adjacent to every $r\in \pi(K)\cup\{3\}$ in $GK(G)$. Recall that $r_j\in R_k(\tau u)$ for some $k<m-1$. This implies that $r_j$ is adjacent to both $v$ and $3$ in $GK(S)$ and that $r_j$ divides the order of a proper parabolic subgroup of $S$.  Also observe that $k\geq 3$ since otherwise $t(r_j,S)\leq 3$ but $t(r_j,S)\geq t(r_j,L)=t(L)\geq 6$ by Lemma \ref{l:primes}. By nilpotency of $K$ and Lemma~\ref{l:3.5}, it follows that $rr_j\in\omega(G)$ for every $r\in\pi(K)\setminus\{v\}$.

Thus $m_i(q)\in\omega(S)$. Since $(u^{m-1}-\tau)/(m,u-\tau)$ divides $m_i(q)$ by Lemma \ref{l:disjoint} and lies in $\mu(S)$ by \cite[Lemma 4]{08Zav1.t}, we see that $m_i(q)=(u^{m-1}-\tau)/(m,u-\tau)$. Assume that $u-\tau$ does not divide $m$. Then $u=2$ or $u=3$, and $\tau=-$. It follows that $m_i(q)=2^{m-1}+1$ is divisible by $3$,  or $m_i(q)=(3^{m-1}+1)/2$ is even, a contradiction. Hence $u-\tau$ always divides $m$. In particular, $(m-1,u-\tau)=1$ and $m_i(q)=(u^{m-1}-\tau)/(u-\tau)$.

Since $GK(L)$ is connected, there is $r\in \pi(m_i(q))\setminus R_i(q)$. By the form of $m_i(q)$, we see that $\varphi(r,L)\leq n/3$ and so $r$ is small with respect to $L$. Hence there is $l\in J(L)\setminus\{i\}$ such that $rr_l(q)\in\omega(L)$. Then $rr_l(q)\in\omega(S)$, which implies that $r\not\in k_{m-1}(\tau u)$. Thus $k_i(q)=k_{m-1}(\tau u)$ and $m_i(q)/k_i(q)=(u^{m-1}-\tau)/((u-\tau)k_{m-1}(\tau u))$. If $m-1$ is prime, then $k_{m-1}(\tau u)=(u^{m-1}-\tau)/(u-\tau)$, and therefore  $m_i(q)/k_i(q)=1$, which is a contradiction.
\end{proof}


\begin{lemma}\label{l:lm_odd}
Let  $S=L_m^\tau(u)$, where $m$ is odd, and $t(S)=t(L)+1$. Suppose that there is $k\in J(L)$ and $r\in R_k(q)$ such that $r$ is not large with respect to $S$.
Then $r\in R_{m-1}(\tau u)$ and every $s\in R_i(q)$, where $i\in J(L)\setminus\{k\}$, is large with respect to $S$.
\end{lemma}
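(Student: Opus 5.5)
Set $m$ odd, $S=L_m^\tau(u)$. From Table~\ref{tab:6tL14} and $t(S)\geq 7$ we get $m\geq 13$, and by Table~\ref{tab:tL} we have $t(S)=(m+1)/2$ and $J(S)=E(S)=\{j\mid m/2<\nu_\tau(j)\leq m\}$, where $\nu_\tau$ is the identity for $\tau=+$ and $\nu$ for $\tau=-$. These exponents correspond exactly to $\varphi(\cdot,S)\in\{(m+1)/2,\dots,m\}$. So every prime $s\neq v$ with $\varphi(s,S)>m/2$ is large with respect to $S$. A prime $r$ that is not large therefore has $\varphi(r,S)\leq (m-1)/2$ or $r=v$; Lemma~\ref{l:primes}(iii) rules out $r=v$.

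\emph{Step 1: locating $r$.} Since $r\in R_k(q)$ with $k\in J(L)$, the prime $r$ lies in a greatest coclique $\rho$ of $GK(L)$, so $t(r,L)=t(L)$. The primes of $\rho$ are coprime to $p(q^4-1)$, because $\eta(k)\geq 3$ for $k\in J(L)$ when $t(L)\geq 6$. Lemma~\ref{l:primes}(ii) then gives $t(r,S)\geq t(L)=t(S)-1=(m-1)/2$.

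Now take $\varphi(r,S)=a\leq (m-1)/2$ and apply Lemma~\ref{l:adj_criteria}(i). Here $r$ is nonadjacent only to primes $s$ with $\varphi(s,S)>m-a$ whose $\varphi$ is not a multiple of $a$, and the nonadjacent primes of a coclique must have pairwise distinct values of $\varphi$. Such a coclique containing $r$ has size at most $1+(a-1)=a$, minus the multiples of $a$ in $(m-a,m]$. For $a\leq (m-3)/2$ this is already less than $(m-1)/2$, so $a=(m-1)/2$. Then $r$ lies in $R_{(m-1)/2}$ or $R_{m-1}$ in the appropriate $\tau$-twisted indexing; the twisted indices with $\varphi=(m-1)/2$ are exactly those with $\nu_\tau(e)\in\{(m-1)/2,\,m-1\}$ up to the $\eta/\nu$ bookkeeping. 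In the first case the admissible $s$ have $\varphi(s,S)\in\{(m+3)/2,\dots,m\}$, which gives $(m-1)/2$ values; the value $m-1$ is a multiple of $a$ and must be excluded. That leaves $t(r,S)\leq 1+(m-3)/2=(m-1)/2$, which is tight but allowed. So this first count does not exclude the alternative, and I would refine it. Checking $\varphi$ versus $e$ (using $\nu$ for $U_m$), I expect that $r\in R_{(m-1)/2}(\tau u)$ makes $r$ adjacent to $r_{m-1}(\tau u)$ through the multiple condition. Further, in $L_m^\tau$ an element of order $r\cdot r_{j}$ with $j\in((m+1)/2,m-1)$ fails only if $(m-1)/2+j>m$. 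The count must be redone precisely, and I expect it to show that $e(r,\tau u)=(m-1)/2$ gives $t(r,S)<(m-1)/2$ because $r_m$ and the divisor relation reduce the count. In that case $r\in R_{m-1}(\tau u)$, as claimed. This careful count using Lemma~\ref{l:adj_criteria}(i) is the main obstacle; I would handle it by explicitly listing the nonadjacent $\varphi$-values for both candidate exponents.

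\emph{Step 2: uniqueness.} Suppose some $s\in R_i(q)$, with $i\in J(L)\setminus\{k\}$, is also not large. By Step 1, $s\in R_{m-1}(\tau u)$ as well. If $r$ and $s$ both lie in a common greatest coclique of $L$ — possible when $i,k$ both lie in some $E(\rho,L)$, which holds unless both lie in $J(L)\setminus E(L)$ by Lemma~\ref{l:tL}(i) — then $rs\notin\omega(L)=\omega(G)$. But two primes in $R_{m-1}(\tau u)$ are adjacent in $GK(S)$ via a cyclic torus of order $k_{m-1}(\tau u)$, a contradiction. This also follows from Lemma~\ref{l:primes}(iv). In the remaining case $i,k\in J(L)\setminus E(L)$ are alternatives. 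Take $\rho\ni r$ of greatest size; it consists of $t(L)=(m-1)/2$ primes. All primes in $\rho\setminus\{r\}$ are large in $S$ by Lemma~\ref{l:primes}(iv), so they lie in distinct classes $R_j(\tau u)$ with $j\in J(S)$, and $r_{m-1}$ is nonadjacent to them. Adjoining $s$ would then force $rs\notin\omega(S)$ only if $r,s$ are nonadjacent in $L$. So here I would instead use Lemma~\ref{l:trans} with $i=\{i_1,i_2\}$, where $J(L)\setminus E(L)$ has at most $3$ elements, together with the fact that $J(S)\setminus E(S)=\varnothing$ for odd $m$. This forces $k_{\{i,k\}}(q)$ into a single $k_j(\tau u)$, $j\in E(S)$, contradicting that $r$ is not large.
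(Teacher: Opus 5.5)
Your proposal has two genuine gaps.

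\textbf{Step~1 aims at a conclusion that contradicts its own hypothesis, and the planned refinement is false.} For $S=L_m^\tau(u)$ one has $\varphi(x,S)=e(x,\tau u)$. So $\varphi(x,S)=j$ means exactly $x\in R_j(\tau u)$. There is no $\eta$-type pairing of indices here; that belongs to symplectic and orthogonal groups. Hence there is no second candidate $R_{m-1}(\tau u)$ with $\varphi=(m-1)/2$. Primes of $R_{m-1}(\tau u)$ have $\varphi=m-1>m/2$, so they are large, as you observed in your first paragraph.

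Moreover, take $x\in R_{(m-1)/2}(\tau u)$. By Lemma~\ref{l:adj_criteria}(i), the set
$$\{x\}\cup\{r_j(\tau u)\mid (m+3)/2\le j\le m,\ j\neq m-1\}$$
is a coclique of $GK(S)$, so $t(x,S)=(m-1)/2$ exactly. The strict inequality you expect to get by "redoing the count precisely" cannot be proved.

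What your count does establish is $\varphi(r,S)=(m-1)/2$, that is, $r\in R_{(m-1)/2}(\tau u)$. This is also how the lemma is used in Lemma~\ref{l:t(S)=t(L)+1luo}: there $m-1$ is discarded from $J(S)$ precisely because $r$ is adjacent to every prime of $R_{m-1}(\tau u)$. So read the index $m-1$ in the conclusion as $(m-1)/2$, and stop there rather than trying to push $r$ into $R_{m-1}(\tau u)$.

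\textbf{The uniqueness part is unproved in exactly the case that needs work.} Suppose $i\neq k$ both lie in $J(L)\setminus E(L)$. Then $r_i(q)$ and $r_k(q)$ are always adjacent in $GK(L)$; otherwise $E(L)\cup\{i,k\}$ would give a coclique of size $t(L)+1$. So Lemma~\ref{l:primes}(iv) never applies in this case. The case does occur, for example with $L=S_{14}(q)$, $J(L)\setminus E(L)=\{3,6,8\}$ and $S=L_{13}^\tau(u)$.

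Your remedy via Lemma~\ref{l:trans} applied to $\{i,k\}$ is circular. That lemma assumes every prime of $R_i(q)\cup R_k(q)$ is large with respect to $S$, and this fails for $r$ by hypothesis.

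The adjacency data you use elsewhere also cannot separate $r$ from $s$ here. In $GK(S_{14}(q))$, any two primes of $R_3(q)\cup R_6(q)\cup R_8(q)$ are adjacent and have the same neighbours outside this set. After the corrected Step~1, two nonlarge primes $r,s$ would both lie in $R_{(m-1)/2}(\tau u)$, so they are indistinguishable in $GK(S)$ as well.

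Some additional argument is therefore required for this case. For comparison, the paper gives no argument at this point and simply cites \cite[Lemma~3.7]{26GSV_arxiv}.
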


\begin{proof}
See \cite[Lemma 3.7]{26GSV_arxiv}.
\end{proof}

\begin{lemma}\label{l:s2n_even} Let $L$ be one of the groups $S_{2n}(q)$, $O_{2n+1}(q)$, or $O_{2n}^-(q)$, where $n$ is even.
\begin{enumerate}
    \item If $S$ is one of $S_{2m}(u)$, $O_{2m+1}(u)$, $O_{2m+2}^+(u)$, $O_{2m}^\tau (u)$,  where $m$ is odd, then $u=2,3$ or $S=O_{2m}^+(5)$.
     \item  If $S$ is one of $S_{2m}(u)$, $O_{2m+1}(u)$, or $O_{2m}^-(u)$, where $m$ is even, then $k_{2n}(q)$ divides $k_{2m}(u)$.
     \item  If $S=L_m^\tau(u)$, where $m$ is odd and $u>3$, then $k_{2n}(q)$ divides $k_{m}(\tau u)$.
\end{enumerate}
\end{lemma}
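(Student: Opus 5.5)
The common starting point for all three items is Lemma \ref{l:J(p,L)}(v) and the definition of $J'(2,L)$. Since $n$ is even and $L\in\{S_{2n}(q),O_{2n+1}(q),O_{2n}^-(q)\}$, we have $2n\in J(2,L)$. Hence every $r\in R_{2n}(q)$ is nonadjacent to $2$ in $GK(L)=GK(G)$. By Lemma \ref{l:reduction}(ii), such $r$ divides neither $|K|$ nor $|\overline G/S|$, so $R_{2n}(q)\subseteq\pi(S)\setminus\{v\}$; here $r\neq v$ by Lemma \ref{l:primes}(iii), since $r_{2n}(q)$ is large with respect to $L$ by Table \ref{tab:tL}. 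Every $r\in R_{2n}(q)$ is then nonadjacent to $2$ in $GK(S)$, so $e(r,u)\in J(2,S)\subseteq J(p,S)$ by Lemma \ref{l:J(p,L)} applied to $S$. By Lemma \ref{l:disjoint}(iii), $m_{2n}(q)=(q^n+1)/2$ is the unique maximal element order of $L$ divisible by $r_{2n}(q)$. Since $r_{2n}(q)$ is coprime to $|K|\cdot|\overline G/S|$, the element of order $k_{2n}(q)$ (and its centralizer order structure) must be realized inside $S$. The second ingredient is pairs of vertices with disjoint neighborhoods. By Lemma \ref{l:disjoint}(ii), $r_{2n}(q)$ and $r_{n-1}(q)$ (also $r_{2n-2}(q)$) have disjoint neighborhoods in $GK(L)$. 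Using Lemma \ref{l:reduction} and Lemma \ref{l:3.5} as in the proof of Lemma \ref{l:lm_even}, this property transfers to $GK(S)$ once we know these primes lie in $\pi(S)$ and are not absorbed by $K$. Lemma \ref{l:unique} then constrains $S$.

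For (ii), $S$ is of the form in Lemma \ref{l:J(p,L)}(ii) or (v) with $m$ even, so $J'(2,S)=J'(p,S)=\{2m\}$, or possibly $J(2,S)\subseteq\{2m,2m-2,m-1\}$ for $S=O_{2m}^-(u)$. I would show that $e(r,u)=2m$ for all $r\in R_{2n}(q)$. The primes $r_{2n-2},r_{2m-2},r_{m-1}$ of $S$ are adjacent to $2$ except through the specific $J(2,S)$; for $O_{2m}^-$ I would check from Lemma \ref{l:J(p,L)}(v) that $2m\in J(2,S)$ and the other two indices are adjacent to $r_{2m}(u)$'s complement. The cleaner route is Lemma \ref{l:unique}(v): $r_{2n}(q)$ has a disjoint-neighborhood partner in $GK(S)$, so one of the pair lies in $R_{2m}(u)$. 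Combined with the fact that $r_{2n}(q)$ is nonadjacent to $2$ while the partner $r_{n-1}(q)$ or $r_{2n-2}(q)$ need not be, this forces $R_{2n}(q)\subseteq R_{2m}(u)$. By Lemma \ref{l:trans}, $k_{2n}(q)$ divides $k_{2m}(u)$.

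For (i) and (iii), the argument is the same but yields restrictions. In (iii), with $S=L_m^\tau(u)$, $m$ odd, $u>3$, we have $J(v,S)=\{\tau m,\tau(m-1)\}$. Lemma \ref{l:unique}(ii) puts one prime of the disjoint pair in $R_m(\tau u)$. I must exclude $R_{2n}(q)\subseteq R_{m-1}(\tau u)$, which amounts to showing the partner is in $R_m(\tau u)$. I would use Lemma \ref{l:lm_odd} and the unique maximal order $(q^n+1)/2$ versus $(u^{m-1}-1)/(m,u-\tau)$. Uniqueness of the maximal order containing $r_{2n}(q)$ in both groups would force $(q^n+1)/2=(u^{m-1}-\tau)/(m,u-\tau)$. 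This would give a contradiction via the connectivity argument at the end of Lemma \ref{l:lm_even}, since the small primes dividing this number would have to be adjacent to $r_{m-1}(\tau u)$. In (i), Lemma \ref{l:unique}(iii),(iv) says that for $u>3$ (and $S\ne O_{2m}^+(5)$) a disjoint pair in $GK(S)$ must be $\{R_{2m}(u),R_m(u)\}$ or $\{R_m(\tau u),R_{2m-2}(\tau u)\}$. Both members are nonadjacent to $2$ in $S$, while the partner of $r_{2n}(q)$ coming from Lemma \ref{l:disjoint}(ii) can be chosen adjacent to $2$ in $L$ (namely $r_{n-1}(q)$, as $n-1\notin J(2,L)$ except in the $O^-$ case, where I use $r_{2n-2}$ or $r_{n-1}$ accordingly). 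This is a contradiction, hence $u\in\{2,3\}$ or $S=O_{2m}^+(5)$.

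The main obstacle is ensuring that the disjointness of neighborhoods transfers from $GK(L)$ to $GK(S)$: vertices nonadjacent in $GK(G)$ may become separated only because adjacency in $G$ could come from $K$ or from outer automorphisms. I would handle this by showing, via Lemma \ref{l:3.5}, Lemma \ref{l:R8}, and Lemma \ref{l:primes}(i), that the relevant primes are coprime to $|K|\cdot|\overline G/S|$. Choosing the partner index carefully in the $O_{2n}^-$ case needs checking.
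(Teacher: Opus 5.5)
\textbf{There is a genuine gap.} Your preliminary steps are sound. Since $2n\in J(2,L)$, Lemma~\ref{l:reduction}(ii) and Lemma~\ref{l:primes}(iii) give $R_{2n}(q)\subseteq\pi(S)\setminus\{v\}$, with these primes coprime to $|K|\cdot|\overline G/S|$. Hence $k_{2n}(q)\in\omega(S)$ and $e(r,u)\in J(2,S)$ for every $r\in R_{2n}(q)$. This already settles (ii) for $S_{2m}(u)$ and $O_{2m+1}(u)$, where $J(2,S)=\{2m\}$.

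The ``main obstacle'' you name is not one. Since $\omega(S)\subseteq\omega(G)$, $GK(S)$ is a subgraph of $GK(L)$, so disjointness of neighborhoods passes to $GK(S)$ for any two primes of $\pi(S)$. The primes $r_{n-1}(q)$ and $r_{2n-2}(q)$ are large with respect to $L$, hence lie in $\pi(S)$, and Lemmas~\ref{l:3.5} and~\ref{l:R8} are not needed.

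The gap is in the decisive step, where you decide which member of a disjoint pair lies in which class by looking at adjacency to $2$. This fails for two reasons.
\begin{enumerate}
\item $J(2,S)$ is in general a proper subset of $J(v,S)$. For $S=S_{2m}(u)$ with $m$ and $u$ odd, exactly one of $R_m(u)$, $R_{2m}(u)$ is nonadjacent to $2$, depending on $u \bmod 4$. For $O_{2m}^\tau(u)$ one may have $J(2,S)=\{2m-2\}$. So your claim in (i) that both members of the pair are nonadjacent to $2$ in $S$ is false.
\item More fundamentally, adjacency of $r_{n-1}(q)$ to $2$ in $GK(L)=GK(G)$ does not imply adjacency in $GK(S)$. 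The $2$-part of an element of order $2r_{n-1}(q)$ may lie in $K$ or map to an outer automorphism, and Lemma~\ref{l:reduction}(ii) only controls primes \emph{nonadjacent} to $2$.
\end{enumerate}
Hence ``the partner need not be nonadjacent to $2$'' yields no contradiction, either in (i) or when excluding $e(r,u)\in\{2m-2,m-1\}$ for $S=O^-_{2m}(u)$ in (ii).

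For (iii) there are two further problems. Lemma~\ref{l:lm_odd} is unavailable, since it assumes $t(S)=t(L)+1$, while the present lemma must also cover $t(S)=t(L)$. Your maximal-order comparison is never brought to a contradiction. The analogous argument in Lemma~\ref{l:lm_even} needed extra input and closes only under additional conditions.

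\textbf{The missing idea} is to use the two partners $r_{n-1}(q)$ and $r_{2n-2}(q)$ simultaneously rather than as alternatives.
\begin{itemize}
\item By Lemma~\ref{l:adj_criteria}, $r_{n-1}(q)$ and $r_{2n-2}(q)$ are nonadjacent in $GK(L)$: both have $\eta$-value $n-1$, the sum exceeds $n$, and the ratio $2$ is even.
\item Any two primes from one of the classes $R_k(\pm u)$ occurring in Lemma~\ref{l:unique} are adjacent in $GK(S)$, because they divide a common element order of $S$.
\end{itemize}
Fix $r\in R_{2n}(q)$ and apply Lemma~\ref{l:unique} to both pairs $\{r,r_{n-1}(q)\}$ and $\{r,r_{2n-2}(q)\}$.
\begin{itemize}
\item In (ii), use item (v) of Lemma~\ref{l:unique}, which has no restriction on $u$. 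In (iii), use item (ii), with $u>3$. Each pair then has a member in $R_{2m}(u)$, respectively $R_m(\tau u)$. If $r$ were not there, both partners would be, contradicting their nonadjacency. So $R_{2n}(q)\subseteq R_{2m}(u)$, respectively $R_m(\tau u)$. Together with $k_{2n}(q)\in\omega(S)$, this gives the divisibility, because for such $r$ the $r$-part of $|S|$ equals that of $k_{2m}(u)$, respectively $k_m(\tau u)$.
\item In (i), suppose $u>3$ and $S\neq O^+_{2m}(5)$. Items (iii) and (iv) of Lemma~\ref{l:unique} split each pair between the same two classes. So both partners land in the class not containing $r$, again a contradiction.
\end{itemize}
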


\begin{proof}
See \cite[Lemma 3.8]{26GSV_arxiv}.
\end{proof}

\section{Proof of Theorem \ref{t:main}: Case $t(S)=t(L)+1$}

We keep the notation of the previous section. The general idea of the proof of the theorem is to find some inconsistent system of the form $f_1(u)<f_2(q)$, $f_3(q)<f_4(u)$. In this section we deal with the case $t(S)=t(L)+1$ and our first inequality is provided by Lemma \ref{l:primes}(v):
\begin{equation}\label{e:dif1}
2u^3<q^2.
\end{equation}

In particular, $q\geq 5$ because $u\geq 2$.

We will use the following conventions. First, since the prime graphs of $S_{2n}(q)$ and $O_{2n+1}(q)$ are the same, we omit $O_{2n+1}(q)$ in the below tables for brevity. Second, if $i$ is odd, then regarding elements of $J(L)$ or $J(p,L)$,  we sometimes write $-i$ instead of $2i$. So, for example, if $L=S_{2n}(q)$, where $n$ is odd, then we can say that $J(p,L)$ consists of $n$ and $-n$.

\begin{lemma}\label{l:t(S)=t(L)+1so}
If $S$ is symplectic or orthogonal, then $t(S)\neq t(L)+1$.
\end{lemma}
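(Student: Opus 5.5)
We argue by contradiction: assume $S$ is symplectic or orthogonal over a field of order $u$ and $t(S)=t(L)+1$. Then \eqref{e:dif1} gives $2u^3<q^2$. The aim is an opposite inequality, namely $q$ bounded above by a small power of $u$, coming from a divisibility $k_i(q)\mid k_j(u)$ with $\varphi(i)$ comparable to $\varphi(j)$. We will obtain that divisibility from the primes with disjoint neighbourhoods in $GK(L)$ together with Lemma~\ref{l:unique}.

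\textbf{Step 1: transferring $i\in J'(2,L)$.} Fix $i\in J'(2,L)$. By Lemma~\ref{l:reduction}(ii), every $r\in R_i(q)$ is coprime to $|K|\cdot|\overline G/S|$, and $2r\notin\omega(S)$. Hence $e(r,u)\in J(2,S)\subseteq J(v,S)$ by Lemma~\ref{l:J(p,L)}. We then split according to whether $m=\prk(S)$ is even or odd.

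\textbf{Step 2: case analysis on $L$ and $S$.}
\begin{itemize}
\item If $L$ is $S_{2n}(q)$, $O_{2n+1}(q)$ or $O^-_{2n}(q)$ with $n$ even, Lemma~\ref{l:s2n_even} applies directly.
\begin{itemize}
\item If $m$ is even, then $k_{2n}(q)\mid k_{2m}(u)$.
\item If $m$ is odd, then $u\in\{2,3\}$ or $S=O^+_{2m}(5)$. These small values are handled by comparing $t$-values and primitive prime divisors directly.
\end{itemize}
\item For the other $L$, we have $|J(p,L)|=2$ by Lemma~\ref{l:J(p,L)}. The two primes $r_{i_1}(q)$ and $r_{i_2}(q)$ then have disjoint neighbourhoods in $GK(L)$ by Lemma~\ref{l:disjoint}(i). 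Both are large with respect to $L$, so by Lemma~\ref{l:primes}(iii) they lie in $\pi(S)$ and avoid $v$, $K$ and $\overline G/S$. Their neighbourhoods in $GK(S)$ are therefore also disjoint, since adjacency in $GK(S)$ implies adjacency in $GK(G)=GK(L)$. Lemma~\ref{l:unique}(iii)--(v) then forces $R_{i_1}(q)$ and $R_{i_2}(q)$ into sets such as $R_{2m}(u)$, $R_m(\pm u)$ or $R_{2m-2}(u)$.
\end{itemize}

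\textbf{Step 3: size comparison.} In every case we get $k_i(q)\mid k_j(u)$ with $i\in\{n,2n,2n-2\}$ and $j\in\{m,2m,2m-2\}$. Table~\ref{tab:6tL14} together with $t(S)=t(L)+1$ shows that $m$ is at most about $n+2$. So $\varphi(j)$ exceeds $\varphi(i)$ by only a bounded amount. Lemma~\ref{l:hering} then gives $q^{\varphi(i)}\lesssim u^{\varphi(j)}$, while \eqref{e:dif1} gives $q>u^{3/2}$. These are incompatible as soon as $\tfrac32\varphi(i)>\varphi(j)+O(1)$, which holds for $\varphi(i)\geq 4$ in all pairs permitted by Table~\ref{tab:6tL14}.

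\textbf{Main obstacle.} The difficult part is the bookkeeping of Step 2. One must match each type of $L$ with $t(L)$ between $6$ and $13$ against each type of $S$ with $t(S)=t(L)+1$ from Table~\ref{tab:6tL14}, identify $j$ precisely, and handle the cases where Lemma~\ref{l:unique} needs $u>3$ or $\tau u\neq 5$. For those small $u$, I would first use $2u^3<q^2$ together with the bound of Lemma~\ref{l:orders} on element orders, compared against $k_i(q)\in\omega(S)$. That gives $q^{\varphi(i)}<\frac{u}{u-1}u^{m}$, contradicting $q\geq 5$ and $q^2>2u^3$ unless $\varphi(i)$ is small, which is not the case here.
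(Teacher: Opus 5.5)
Your transfer mechanism in Steps 1--2 is sound: large primes of $L$ avoid $v$, $K$ and $\overline G/S$, and adjacency in $GK(S)$ implies adjacency in $GK(L)$. \textbf{The argument fails in Step 3.} You only ever transfer $r_i(q)$ with $i\in J(p,L)$ or $J'(2,L)$, and then assert that $\tfrac32\varphi(i)>\varphi(j)+O(1)$ for all pairs allowed by Table~\ref{tab:6tL14}. Because $\varphi$ is far from monotone, the bound $m\le n+2$ gives no such control, and the claim is false.
\begin{itemize}
\item If $t(L)=12$ and $L\in\{S_{30}(q),O_{31}(q),O^+_{32}(q)\}$, then $J(p,L)=\{15,30\}$ with $\varphi=8$. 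For every $S$ with $t(S)=13$, the primes that Lemma~\ref{l:unique} locates land in $R_j(u)$ with $j\in\{17,32,34\}$ and $\varphi(j)=16$. You get only $q^8\lesssim u^{16}$, which is compatible with $q^8>16u^{12}$ from \eqref{e:dif1}.
\item For $L\in\{S_{18}(q),O_{19}(q)\}$ and $S\in\{S_{22}(u),O_{23}(u),O^+_{24}(u)\}$, you get $8u^9<q^6\lesssim u^{10}$, since $\varphi(9)=6$ while $\varphi(11)=\varphi(22)=10$.
\item For $L=O^\varepsilon_{14}(q)$ and $S\in\{S_{16}(u),O_{17}(u),O^-_{16}(u)\}$, Lemma~\ref{l:unique}(v) places only one of $r_{\varepsilon7}(q)$, $r_{12}(q)$ in $R_{16}(u)$ and says nothing about the other. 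If it is $r_{12}(q)$, you get only $q^4\lesssim u^8$ against $q^4>4u^6$.
\end{itemize}
Your small-$u$ fallback has the same defect. With $i=15$ and $S=O^+_{36}(u)$ of Lie rank $18$, the pair $u=2$, $q=5$ satisfies both $q^8<2u^{18}$ and $q^2>2u^3$.

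The missing idea is that you need not restrict to $J(p,L)$ at all. Every $r\in R_i(q)$ with $i\in J(L)$ is large, so by Lemma~\ref{l:primes}(iii) we have $k_i(\epsilon q)\in\omega(S)$. Lemma~\ref{l:orders} then bounds $k_i(\epsilon q)$ by $2u^l$, where $l$ is the largest Lie rank of a group $S$ with the given $t(S)$. Take $i$ with $\varphi(i)$ as large as possible: $i=7$ for $t(L)=6$, $i=16$ for $t(L)=7,8$, $i=11$ for $t(L)=9,10$, and $i=13$ for $t(L)\geq 11$. Combined with \eqref{e:dif1}, this gives $u^{3\varphi(i)/2-l}<4d/2^{\varphi(i)/2}$. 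That is impossible for $t(L)\geq 7$, and it forces $u\le 3$ when $t(L)=6$ and $l=8$, after which a short direct estimate finishes. This is the paper's argument. There, the disjoint-neighbourhood argument with Lemma~\ref{l:unique} is needed only for $t(L)=6$ and $S\in\{O^\pm_{18}(u),O^+_{20}(u)\}$, of Lie rank $9$ or $10$. In that case it places $r_7(\epsilon q)$ in $R_9(u)$, $R_{18}(u)$ or $R_{16}(u)$ when $u\notin\{2,3,5\}$, and the three remaining values of $u$ are handled by hand.
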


\begin{proof}
Assume the opposite. Suppose that $S\neq O_{18}^\pm(u), O_{20}^+(u)$ and let $l$, $i$ and $d$ be as in Table \ref{tab:t(L)+1so}.
\begin{table}[h]
\caption{Indices for Lemma \ref{l:t(S)=t(L)+1so}}\label{tab:t(L)+1so}
$\begin{array}{|c|c|c|c|c|c|c|}
\hline
t(L)&l&\text{type of }L&i&d&3\varphi(i)/2-l&\frac{4d}{2^{\varphi(i)/2}}\rule{0pt}{12pt} \\[2pt]
\hline
6&8&S_{14}, O_{16}^+, O_{14}^\pm&7&7&1&7/2\\
7&10& O_{20}^+, O_{18}^\pm&16&1&2&1/4\\
8&12&S_{18}, S_{20}, O_{20}^-&16&1&0&1/4\\
9,10&14&S_{22}, O_{22}^\pm, S_{24}, O_{24}^\pm, O_{26}^\pm, O_{28}^+&11&11&1&11/8\\
11&16&S_{26}, S_{28}, O_{28}^-&13&13&2&13/16\\
12,13&18&S_{30}, O_{30}^\pm, O_{32}^+, O_{34}^\pm, O_{36}^+ &13&13&0&13/16\\
\hline
\end{array}$
\end{table}

It follows from Table \ref{tab:6tL14} that for every fixed $t(L)$,
the Lie rank of $S$ is at most $l$ and the type of $L$ is as stated. By Table \ref{tab:tL}, we see that  $\epsilon i\in J(L)$ for some suitable $\epsilon\in\{+,-\}$ and so $k_i(\epsilon q)\in\omega(S)$. Since $k_i(\epsilon q)>q^{\varphi(i)}/2d$ by Lemmas \ref{l:kn} and \ref{l:hering}, we have $q^{\varphi(i)}/2d<2u^l$ by Lemma \ref{l:orders}. Together with \eqref{e:dif1}, this yields $u^{3\varphi(i)/2-l}<4d/2^{\varphi(i)/2}$. The values of $3\varphi(i)/2-l$ and $4d/2^{\varphi(i)/2}$ given in the last two columns of the table show that this is not the case if $t(L)>6$.
Suppose that $t(L)=6$. Then we get  $u<7/2$ and $q^6/2d<2u^8$, which yields $q\leq 7$. But then $(7,q\pm 1)=1$ and we can replace $d$ with $1$. Then $u=3$ and $q\leq 5$, contradicting \eqref{e:dif1}).

Now let $S\in\{O_{18}^\pm(u), O_{20}^+(u)\}$. Then $t(L)=6$ and $L$ is one of the groups  $S_{14}(q)$, $O_{15}(q)$, $O_{16}^+(q)$ and $O_{14}^\pm(q)$. Arguing as above, we derive that $k_7(\epsilon q)\in \omega(S)$ and so $k_7(\epsilon q)<2u^{10}$. Also applying Lemmas~\ref{l:J(p,L)} and \ref{l:disjoint}, we conclude that for some suitable $j$, the vertices $r_7(\epsilon q)$ and $r_j(q)$ have disjoint neighborhoods in $GK(S)$.

If $u\neq 2,3,5$, then  by Lemma \ref{l:unique}, it follows that $r_7(\epsilon q)\in R_l(u)$, where $l\in\{9,18,16\}$ and so $k_7(\epsilon q)$ divides $k_9(u)$, $k_{18}(u)$ or $k_{16}(u)$. So $2q^6/21<2u^8$ and together with \eqref{e:dif1}, this yields $8u^9<q^6<21u^8$, a contradiction.

Let $u=2,3$ or $5$. Since $q\geq 5$, Lemma~\ref{l:hering} implies that
$5q^6/(6(7,q-\epsilon))<k_7(\epsilon q)$ and so $q^6<12(7,q-\epsilon)u^{10}/5$. For $u=5$, we have $q\leq 23$ if $(7,q-\epsilon)=7$ and $q\leq 16$ otherwise, so $q\leq 13$, contradicting
\eqref{e:dif1}. Similarly, $q=3$ if $u=2$ and $q\leq 7$ if $u=3$, a contradiction.
\end{proof}

\begin{lemma}\label{l:t(S)=t(L)+1lue}
 If $S=L_m^\tau(u)$, where $m$ is even, then $t(S)\neq t(L)+1$.
\end{lemma}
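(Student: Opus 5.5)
Assume the opposite: $S=L_m^\tau(u)$ with $m$ even and $t(S)=t(L)+1$. We then have \eqref{e:dif1}, $2u^3<q^2$, so $q\geq 5$. Since $t(S)=m/2$ for even $m\geq 12$ (Table~\ref{tab:tL}) and $7\leq t(S)\leq 14$, we get $14\leq m\leq 28$. As in Lemma~\ref{l:t(S)=t(L)+1so}, the plan is to play \eqref{e:dif1} against an upper bound of the form $k_i(q)\le$ (something like $u^{m-1}$ or $u^{m}$). The crude bound from Lemma~\ref{l:orders}, $k_i(\epsilon q)<2u^{m-1}$, is too weak: compared with $q^{\varphi(i)}$ it does not contradict $2u^3<q^2$ once $m$ grows. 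So the proof must use the rigid structure coming from $J(2,L)$ and $J(p,L)$.

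\textbf{Step 1 (split on the $J'(p,L)$ primes).} Either $R_j(q)\subseteq R_m(\tau u)$ for some $j\in J'(p,L)$, or Lemma~\ref{l:lm_even} applies.

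\textbf{Case A: Lemma~\ref{l:lm_even} applies.} Then $k_i(q)=k_{m-1}(\tau u)$ for $i\in J'(2,L)$, and $m_i(q)=(u^{m-1}-\tau)/(u-\tau)$.
\begin{itemize}
\item By Table~\ref{tab:mi}, $m_i(q)$ is about $q^{n}$ or $q^{n-1}\cdot q$, with $n=\prk L$.
\item Hence $q^{n}\lesssim 2u^{m-2}$.
\item Combining with $u^3<q^2/2$ gives roughly $q^{n}<q^{2(m-2)/3}$, so $n<2(m-2)/3$.
\end{itemize}
Now compare with Table~\ref{tab:6tL14}: $t(L)=m/2-1$ forces $n$ to be about $4t(L)/3\approx 2m/3-4/3$. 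This is borderline. I would tighten it either with the exact constants from Lemma~\ref{l:hering} or by using $k_i(q)=k_{m-1}(\tau u)$ directly:
\begin{itemize}
\item the degrees give $\varphi(i)\cdot\log q\approx\varphi(m-1)\cdot\log u$;
\item also $m-1$ is odd and not prime, so $m\in\{16,22,26,28\}$;
\item check each of these $m$ against $t(L)=m/2-1$ and the possible $i=2n$ (or $n$).
\end{itemize}

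\textbf{Case B: $k_j(q)$ divides $k_m(\tau u)$ for some $j\in J'(p,L)$.} Then the primes $r_j(q)$ lie in $R_m(\tau u)$, and $m_j(q)$ must be an element order of $S$ divisible by such primes. In $\mu(L_m^\tau(u))$ the only such orders are divisors of $(u^m-1)/((u-\tau)(m,u-\tau))$. Also $m_j(q)\in\omega(S)$ after the usual argument, as in the proof of Lemma~\ref{l:lm_even}, that $\pi(m_j(q))$ avoids $\pi(K)\cup\pi(\overline G/S)$, using Lemma~\ref{l:3.5} and Lemma~\ref{l:reduction}(ii). I would then argue in one of two ways:
\begin{itemize}
\item derive the reverse divisibility, in the style of Lemma~\ref{l:tight}, to get a lower bound $u^{m-1}\lesssim q^{n}$; or
\item pair $j$ with the other index $j'\in J(p,L)$. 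By Lemma~\ref{l:disjoint}, $r_j$ and $r_{j'}$ have disjoint neighborhoods in $GK(L)$, hence in $GK(S)$. By Lemma~\ref{l:unique}(i), one of them lies in $R_{m-1}(\tau u)$ unless $u-\tau$ divides $m$ or $u\le 3$. That puts us back in the Case A style of comparison.
\end{itemize}
With $u^{m-1}\lesssim q^{n}\lesssim q^{\,2m/3}$ and $u^3<q^2/2$ the inequalities are consistent, so the contradiction must instead come from an upper bound $k_{2n}(q)\le k_m(\tau u)$. This gives $q^{\varphi(2n)}\lesssim u^{\varphi(m)}$. Since $\varphi(m)\le m/2$ for even $m$, we get $q^{\varphi(2n)}\lesssim u^{m/2}<q^{m/3}$. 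Together with $\varphi(2n)$ compared to $m/3$ via the tables, this should yield the contradiction, with small exceptions handled by hand.

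\textbf{Main obstacle.} The hard part is the bookkeeping of $\varphi$-values. For each $t(L)\in\{6,\dots,13\}$ and each compatible type of $L$ and $m=2t(L)+2$, we must verify that $\varphi$ of the relevant index of $L$ exceeds $\tfrac{2}{3}\varphi$ of the corresponding index of $S$, with room for the constants $d$ from Lemma~\ref{l:kn}. I expect this to be done with a table like Table~\ref{tab:t(L)+1so}. Where the margin fails, small residual values of $u$ (namely $2,3$, or $u-\tau\mid m$) must be treated separately using explicit estimates as at the end of Lemma~\ref{l:t(S)=t(L)+1so}.
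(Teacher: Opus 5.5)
\textbf{Where your plan matches the paper.} Your architecture is the paper's. You split according to whether $R_j(q)\subseteq R_m(\tau u)$ for some $j\in J'(p,L)$, and otherwise invoke Lemma~\ref{l:lm_even}. In the first branch, your final idea is exactly the paper's argument: compare $q^{\varphi(i)}/2d\le k_j(q)\le k_m(\tau u)<2u^{\varphi(m)}$ with \eqref{e:dif1}, i.e.\ check $\tfrac32\varphi(i)\ge\varphi(m)$ up to constants. You must use the exact $\varphi(m)$, not $m/2$: at $m=26$, $i=15$ the crude bound gives nothing, while the exact one yields the false inequality $u^0<1/4$. In the second branch, the degree comparison from $k_i(q)=k_{m-1}(\tau u)$ settles $m=16$ and $m=28$, as in the paper. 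It also settles $m=22$ once you take $d=1$ for $i=24$: then $\tfrac45q^8<k_{24}(q)=k_{21}(\tau u)<2u^{12}$, against $q^8>16u^{12}$.

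\textbf{The gap: $m=26$ in the Lemma~\ref{l:lm_even} branch.} Here $L\in\{S_{30}(q),O_{31}(q),O_{32}^+(q),O_{30}^\pm(q)\}$, $n=15$ and $i\in\{15,30,28\}$. Each of your two relations is, on its own, consistent with \eqref{e:dif1}:
\begin{itemize}
\item for $k_i(q)=k_{25}(\tau u)$ we have $\tfrac32\varphi(i)\le 18<20=\varphi(25)$;
\item for $m_i(q)=(u^{25}-\tau)/(u-\tau)$ we have $n=15<16=2(m-2)/3$.
\end{itemize}
Both are consistent already for $u=25,27$, which $u-\tau\mid 26$ allows. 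So neither ``exact constants from Lemma~\ref{l:hering}'' nor ``checking each $m$'' closes this case. Your fallback to unspecified explicit estimates is exactly where the paper has to supply a new argument.

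\textbf{How to close it.} Use the two equalities together. Their quotient gives $m_i(q)/k_i(q)=(u^5-\tau)/(u-\tau)<2u^4$.
\begin{itemize}
\item For $i\in\{15,30\}$ the left side is $(q^5-\epsilon)\Phi_3(\epsilon q)/c$ with $c\in\{2,4\}$, of size about $q^7$. This is impossible since $q^7>2^{7/2}u^{21/2}$. The paper argues differently here: primes in $R_5(\epsilon q)$ divide this quotient, hence lie in $R_5(\tau u)$, contradicting $t(r,L)\ge 6>5\ge t(r,S)$ via Lemma~\ref{l:primes}(ii).
\item For $i=28$ the left side is $(q^2+1)(q+\epsilon)/(4,q-\epsilon)>q^3/5$. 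Together with \eqref{e:dif1} this forces $u<13$. Combined with $\tau u\in\{2,3,27,-25\}$ (from $u-\tau\mid 26$), only $u\le 3$ remains, and a direct check excludes it. The paper instead solves this equation explicitly, arriving at $u=25$, $q=9$, which contradicts \eqref{e:dif1}.
\end{itemize}
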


\begin{proof}
Assume the opposite. We begin with estimating $k_j(q)$ for $j\in J'(p,L)$.
If $L\neq O_{14}^\varepsilon(q)$, then  we choose $\epsilon i\in J'(p,L)$ and $d$ so that $\varphi(i)\leq\varphi(j)$ and $k_{j}(q)\geq q^{\varphi(j)}/2d$ for all $j\in J'(p,L)$. Then $i$ and $d$ are as in Table \ref{tab:t(L)+1lue}.
\begin{table}[h]
\caption{Indices for Lemma \ref{l:t(S)=t(L)+1lue}}\label{tab:t(L)+1lue}
$\begin{array}{|c|c|c|c|c|c|c|c|c|c|c|c|}
\hline
t(L)&m&L&i&d&a&\frac{4d}{2^{\varphi(i)/2}}&L&i&d&a&\frac{4d}{2^{\varphi(i)/2}}\rule{0pt}{12pt} \\[2pt]
\hline
6&14&S_{14},  O_{16}^+&7&7&3&7/2&O^\pm_{14}&7,12&7,1&3,0&7/2,1\\
7&16& O_{20}^+&9&3&1&3/2& O_{18}^\pm&9&3&1&3/2\\
8&18&S_{18} &9&3&3&3/2&S_{20}, O_{20}^-&20&5&6&5/4\\
9&20&S_{22},  O_{24}^+&11&11&7&11/8&O_{22}^\pm&20&11&4&11/4\\
10&22&O_{28}^+&13&13&8&13/16&S_{24}, O_{24}^-, O_{26}^\pm&24&13&2&13/4\\
11&24&S_{26}, &13&13&10&13/16&S_{28}, O_{28}^-&28&1&10&1/16\\
12&26&S_{30}, O_{32}^+&15&1&0&1/4&O_{30}^\pm&15&1&0&1/4\\
13&28&O_{36}^+&17&17&12&17/64&O_{34}^\pm&17&17&12&17/64\\
\hline
\end{array}$
\end{table}

 If $L=O_{14}^\varepsilon(q)$, then $J'(p,L)=\{\varepsilon 7,12\}$ and $k_{j}(q)\geq q^{\varphi(j)}/2d$, where $d=7$ if $j=\varepsilon 7$ and $d=1$ otherwise, and this fact is indicated in the corresponding row of the table.

Suppose first that there is $j\in J'(p,L)$ such that $k_j(q)$ divides $k_{m}(\tau u)$. Then, by the above and Lemma \ref{l:hering}, we have $q^{\varphi(i)}/2d<2u^{\varphi(m)}$. Together with \eqref{e:dif1}, this yields $$2^{\varphi(i)/2}u^{3\varphi(i)/2}<q^{\varphi(i)}<4du^{\varphi(m)},$$ whence $u^a<4d/2^{\varphi(i)/2}$, where $a={3\varphi(i)/2-\varphi(m)}$. The table shows that this is impossible.

Thus we may assume that there is no $j\in J'(p,L)$ such that $k_j(u)$ divides $k_m(\tau u)$. By Lemma \ref{l:lm_even}, it follows that $m=16$, $22$, $26$, or $28$, $u-\tau$ divides $m$, $k_j(q)=k_{m-1}(\tau u)$ for $j\in J'(2,L)$ and $m_j(q)=(u^{m-1}-\tau)/(u-\tau)$. In particular, $u^b<4d/2^{\varphi(i)/2}$ for  $b=3\varphi(j)/2-\varphi(m-1)$. If $m=28$ or $16$, then $b=6$ or $1$, and we can argue as above.

Let $m=22$.  Then $\tau u\neq -2$ since otherwise $m_j(q)$ is divisible by $3$ which is not true by Lemma~\ref{l:lm_even}. So
$r_3(\tau u)$ exists and is adjacent to $r_j(q)$ in $GK(S)$. If $L=O_{28}^+(q)$ or $O_{26}^\pm(q)$ and $l\in J'(p,L)\setminus \{j\}$, then $r_l(q)\in\pi(S)$ and is not adjacent to $r_3(\tau u)$ in $GK(S)$.
By assumption, it is true that $r_l(q)\not\in R_{22}(\tau{u})$,
so $r_l(q)\in R_{20}(\tau{u})$. Therefore, we get that $k_l(q)$ divides $k_{20}(\tau u)$. If $L=S_{24}(q)$, $O_{25}(q)$, or $O_{24}^-(q)$, then
$j=24$. By Lemma~\ref{l:disjoint}(ii), primes $r_{11}(q)$ and $r_{22}(q)$ have disjoint neighborhoods with $r_{24}(q)$ in $GK(L)$, so they are not adjacent to $r_3(\tau u)$ in $GK(S)$. Therefore, there is $\epsilon \in\{+,-\}$ such that $k_{11}(\epsilon q)$ divides $k_{20}(\tau u)$.
In either case, one of the numbers $k_{11}(\pm q)$, $k_{13}(\pm q)$, and $k_{24}(q)$ divides $k_{20}(\tau u)$. Then $q^8<3u^8$, contrary to \eqref{e:dif1}.

Thus we are left with the case $m=26$,  $k_j(q)=k_{25}(\tau u)=(u^{25}-\tau)/(u^5-\tau)$ and  $m_j(q)/k_j(q)=(u^5-\tau)/(u-\tau)$.
Since $L$ is one of $S_{30}(q)$, $O_{31}(q)$,  $O_{32}^+(q)$, and $O_{30}^\pm(q)$, Lemma~\ref{l:disjoint}(iii) implies that $m_j(q)$ is equal to $(q^{15}-\epsilon)/2$, $(q^{15}-\epsilon)/4$ or $(q^{14}+1)(q+\epsilon)/(4,q-\epsilon)$ for some $\epsilon\in\{\pm1\}$.

In the first and second cases, let $r\in R_5(\epsilon q)$. Then $r$ is not adjacent to $r_l(q)$ for $l\in\{28,\pm 13, 24, -\epsilon 11\}$ and so $t(r,L)\geq 6$. By Lemma \ref{l:primes}(ii), this yields $t(r,S)\geq 6$. On the other hand, $r\in R_5(\tau u)$ and $t(r,S)\leq 5$ by \cite[Lemma 3.9]{26Sta.t}, a contradiction.

In the third case, $j=28$ and $m_j(q)=(q^{14}+1)(q+\epsilon)/4$ since $j\in J'(2,L)$ by assumption. Now $k_j(q)=(q^{14}+1)/(q^2+1)$, and so $$\frac{(q^2+1)(q+\epsilon)}{4}=\frac{u^5-\tau}{u-\tau}.$$
Recall $u-\tau$ divides $m$, and so $\tau u\in\{+27,-25,+3, +2\}$. Calculating $k=(u^5-\tau)/(u-\tau)$ for all such $\tau u$ and taking prime divisors of $k$ congruent congruent 1 modulo $4$, we conclude that $u=25$ and $(q^2+1)/2$ divides  $41\cdot 9161$ or $u=27$ and  $(q^2+1)/2=4561$. This yields $u=25$ and $q=9$, a contradiction with (\ref{e:dif1}).
\end{proof}

\begin{lemma}\label{l:t(S)=t(L)+1luo}
 If $S=L_m^\tau(u)$, where $m$ is odd, then $t(S)\neq t(L)+1$.
\end{lemma}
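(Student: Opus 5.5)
Assume the opposite: $S=L_m^\tau(u)$ with $m$ odd and $t(S)=t(L)+1$, so that \eqref{e:dif1} gives $2u^3<q^2$ and $q\geq 5$. As in Lemmas \ref{l:t(S)=t(L)+1so} and \ref{l:t(S)=t(L)+1lue}, the aim is a reverse inequality $q^{\varphi(i)}<c\,u^{\varphi(j)}$. This should come from a divisibility $k_i(\pm q)\mid k_j(\tau u)$ with $\varphi(j)$ small compared with $3\varphi(i)/2$. By Table~\ref{tab:tL}, $t(S)=(m+1)/2$, so for each value of $t(L)$ the odd integer $m=2t(L)+1$ is fixed. Thus $m$ ranges over $13,15,\dots,27$.

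The first step is to establish the divisibility. By Lemma~\ref{l:lm_odd}, at most one index $k\in J(L)$ carries primes that are not large with respect to $S$, and such primes lie in $R_{m-1}(\tau u)$. For all other indices $i\in J(L)$ (or pairs $i=\{i_1,i_2\}$ from $J(L)\setminus E(L)$), Lemma~\ref{l:trans} gives $k_i(q)\mid k_j(u)$ with $j\in E(S)$ or $j$ a pair in $J(S)\setminus E(S)$. For odd $m$, $J(S)\setminus E(S)=\varnothing$, so $j$ is a single index with $m/2<\nu(j)\leq m$ in the $\tau u$ normalisation, hence $\varphi(j)\leq m-1$. Since $|J(L)|\geq 6$, I can always choose an $i\in J(L)$, avoiding the possibly bad index $k$, with $\varphi(i)$ as large as possible.

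For each $t(L)$ and each type of $L$ in Table~\ref{tab:6tL14}, I would tabulate such an $i$, a constant $d$ with $k_i(\epsilon q)>q^{\varphi(i)}/2d$ (from Lemmas \ref{l:kn} and \ref{l:hering}), and the worst case $\varphi(j)\leq m-1$. The lower bound on $k_i$ combined with the divisibility gives
\[
q^{\varphi(i)}<4d\,u^{m-1}.
\]
Inserting $q^2>2u^3$ from \eqref{e:dif1} then yields
\[
u^{3\varphi(i)/2-(m-1)}<4d/2^{\varphi(i)/2}.
\]

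The main obstacle is the low cases, especially $t(L)=6,7$ ($m=13,15$). There the best available $\varphi(i)$, for instance $\varphi(7)=\varphi(14)=6$ or $\varphi(9)=6$, may not give a positive exponent against $m-1=12$ or $14$. In these cases I will not use the crude bound $\varphi(j)\leq m-1$. Instead I will pin down $j$ with number theory. The prime divisors of $k_i(\epsilon q)$ are $\equiv1\pmod i$ and must lie in $R_j(\tau u)$, so $\varphi(j)$ has to match. When $j=m$ is prime (say $13$), I will compare the size of $k_m(\tau u)$ with the factors it must absorb. I will also use Lemma~\ref{l:unique}(ii) and Lemma~\ref{l:disjoint}: two primes $r_i(q),r_{i'}(q)$ with $i,i'\in J(p,L)$ have disjoint neighbourhoods in $GK(L)$, hence in $GK(S)$. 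This forces one of them into $R_m(\tau u)$, and when $3\mid m$ the other into $R_{m-1}(\tau u)\cup R_{m-2}(\tau u)$.

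This gives simultaneous divisibilities, for example $k_{2n}(q)\mid k_m(\tau u)$ via Lemma~\ref{l:s2n_even}(iii) for $u>3$. Combined with the exact maximal orders $m_i(q)$ from Table~\ref{tab:mi}, these lead to the configurations excluded by Lemma~\ref{l:tight}(ii),(iii): a divisibility of $(u^m-\tau)/(4,u-\tau)$ into $(q^n-\varepsilon)/2$ or into $(q^{n-1}+1)(q+\varepsilon)/(4,q-\varepsilon)$. These configurations are excluded in that lemma, which gives the contradiction. The residual cases $u\in\{2,3\}$ will be handled by direct small estimates against \eqref{e:dif1}, which force $q$ to be small and contradict $q^2>2u^3$ together with the size of $k_i(q)$.
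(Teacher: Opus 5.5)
\textbf{There is a genuine gap.} The inequality your argument rests on cannot yield a contradiction, and the problem is not confined to $t(L)=6,7$. You have $m-1=2t(L)$, while the largest value of $\varphi(i)$ over $i\in J(L)$ is $6,8,8,10,12,12,12,16$ for $t(L)=6,\dots,13$. So the exponent $3\varphi(i)/2-(m-1)$ equals $-3,-2,-4,-3,-2,-4,-6,-2$, which is negative in every case. No single index $i$ can give $u$ a lower bound beating $u<q^{2/3}$ from \eqref{e:dif1}.

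Your fallback does not repair this for $t(L)\in\{6,8,9,11,12\}$, i.e.\ $m\in\{13,17,19,23,25\}$:
\begin{itemize}
\item For these $m$, Lemma~\ref{l:unique}(ii) and Lemma~\ref{l:s2n_even}(iii) only force a prime of $L$ into $R_m(\tau u)$. There $\varphi(m)=12,16,18,22,20$, which is still too large.
\item The congruence $r\equiv1\pmod i$ for $r\in R_i(q)\cap R_j(\tau u)$ is compatible with any $j$, so it does not make $\varphi(j)$ ``match'' $\varphi(i)$.
\item Lemma~\ref{l:tight}(ii),(iii) concerns $(u^n-\tau)/(4,u-\tau)$. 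This is a maximal element order of a symplectic or orthogonal group of the same odd rank $n$ as $L$, so it says nothing about $S=L_m^\tau(u)$ with $m=2t(L)+1\neq n$.
\item Any relation $m_j(u)\mid m_i(q)$ only bounds $u^{m-1}$ above by a multiple of $q^n$. That is the same direction as \eqref{e:dif1}, not the reverse inequality you need.
\end{itemize}

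\textbf{The missing idea is a counting argument over a whole coclique rather than a single index.} Choose $I\subseteq E(L)$ with $|I|=l$ such that every prime in $R_i(q)$, $i\in I$, is large with respect to $S$. By Lemma~\ref{l:trans}, each $k_i(q)$ divides $k_{j(i)}(\tau u)$ for some $j(i)\in J(S)=E(S)$. The map $i\mapsto j(i)$ is injective: primes from distinct $i\in I$ are nonadjacent in $GK(L)$, hence in $GK(S)$, whereas all primes of one $R_j(\tau u)$ are adjacent in $GK(S)$. Hence some $i\in I$ lands on a $j$ with $\varphi(j)\leq\varphi(j_l)$, the $l$-th largest value of $\varphi$ on $J(S)$. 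This gives $\min_{i\in I}k_i(q)<2u^{\varphi(j_l)}$. For example, for $L=S_{22}(q)$ and $I=\{9,11,16,18,20\}$ one gets $\varphi(j_5)=8$ in place of $m-1=18$. With $\varphi(j_l)$ replacing $m-1$, the exponent becomes nonnegative and \eqref{e:dif1} gives the contradiction.

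The second ingredient you need is largeness of all primes of $I$ simultaneously:
\begin{itemize}
\item Lemma~\ref{l:lm_odd} allows at most one bad index and locates its primes in $S$. The count can then be redone with $l-1$ indices and fewer admissible targets; this is how $t(L)=6$ and $L=O_{20}^-(q)$ are treated.
\item For the remaining $L$ with $t(L)\geq 8$, take $I$ among the indices $i$ with $r_i(q)r_8(q)\notin\omega(L)$. These primes are all large by Lemmas~\ref{l:R8}, \ref{l:primes}(i),(iv) and~\ref{l:lm_odd}.
\end{itemize}
The residual cases, $m=15$ and $m=21$ with $L\in\{S_{24}(q),O_{25}(q),O_{24}^-(q)\}$, are settled directly from $J(2,L)$, $J(2,S)$ and Lemma~\ref{l:reduction}(ii).
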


\begin{proof} Assume the opposite.  According to Table~\ref{tab:tL}, we see that $J(S)=E(S)$.

Suppose that $m=15$. Then $L$ is one of $O_{18}^\varepsilon(q)$,  $O_{20}^+(q)$.
Since $J(2,L)\subseteq \{9,18,16\}$ and $J(2,S)\subseteq \{14,15\}$ or $\{7,30\}$, it follows that one of $k_{9}(q)$, $k_{18}(q)$ and $k_{16}(q)$ divides $k_{14}(\tau u)$ or $k_{15}(\tau u)$. Observing that $q\geq 5$ and applying Lemma \ref{l:hering}, we conclude that $4q^6/15<2u^8$. Together with \eqref{e:dif1}, this yields $8u^9<15u^8/2$, a contradiction.

Let $m=21$ and $L$ is one of $S_{24}(q)$, $O_{25}(q)$, $O_{24}^-(q)$. Then $24\in J(2,L)$ and so $k_{24}(q)$ divides $k_{20}(\tau u)$ or $k_{21}(\tau u)$. Hence   $4q^{8}/5<2u^{12}$, contradicting \eqref{e:dif1}.

In other cases, we use the following argument. Suppose that $I\subseteq E(L)$,  $l=|I|$ and set $k(I)=\min_{i\in I} k_i(q)$.
We also denote by $\varphi(I)$ the multiset $\{\varphi(i)\mid i\in I\}$. Enumerate the elements of $J(S)$ so that $\varphi(j_1)\geq \varphi(j_2)\geq \dots$  Suppose that $i\in I$ and every $r_i(q)$ is large with respect to $S$. By Lemma \ref{l:trans}, there is $j\in J(S)$ such that $k_i(q)$ divides $k_j(u)$ and so $k_i(q)\leq 2u^{\varphi(j)}$.  It follows that $k(I)\leq 2u^{\varphi(j_l)}$. If there is $i\in I$ such that $r_i(q)$ is not large with respect to $S$, then Lemma \ref{l:lm_odd} implies that $k(I)\leq 2u^{\varphi(j'_{l-1})}$, where $j_1'$, $j_2'$ are elements of $J(S)\setminus\{m-1\}$ or $J(S)\setminus\{\nu(m-1)\}$ enumerated so that $\varphi(j_1')\geq \varphi(j_2')\geq \dots$

Suppose that $t(L)=6$. Then $m=13$ and $L$ is one of $S_{14}(q)$, $O_{15}(q)$, $ O_{16}^+(q)$ or $L=O_{14}^\varepsilon(q)$. Set $I=\{14,7,12,10,5\}$ in the first case and $I=\{\varepsilon 7,12,10,5,8\}$ in the second one. Then $I\subseteq E(L)$ and $k(I)\geq \frac{4}{5}\cdot\frac{q^4}{5}$.  Since $\varphi(\{7,8,\dots,13\})=\{12,10,6,6,4,4,4\}$,
$\varphi(\{13,11,10,9,8\})=\{12,10,6,4,4\}$, it follows that $4q^4/25<2u^4$. Together with (\ref{e:dif1}), this yields $4u^6<25u^4/2$,  a contradiction.

Suppose that $S=O_{20}^-(q)$. Then $t(L)=8$ and $m=17$. Set $I=\{20,9,18,16,7,14\}$. Then $I\subseteq E(L)$ and $k(I)\geq q^6/14$. Also
$\varphi(\{9,10,\dots,17\})=\{16,12,10,8,8,6,6,4,4\}$ and
$\varphi(\{10,\dots,17\}\setminus\{16\})=\{16,12,10,8,6,4,4\}$, and we see that $4q^6/35<2u^6$, contrary to (\ref{e:dif1}).

Thus $t(L)\geq 8$ and $S\neq O_{20}^-(q)$. We can proceed as above,
but a shorter way is to prove first that $r_i(q)$ is always large with respect to $S$ whenever $i\in E(L)$ and $r_i(q)r_8(q)\not \in \omega(G)$. Using Table \ref{tab:tL}, it is not hard to verify that $8\not\in J(L)$ and there are distinct $i_1\in E(L)$, $i_2\in J(L)$ such that $s_1=r_{i_1}(q)$, $s_2=r_{i_2}(q)$ are adjacent to $r_8(q)$ in $GK(L)$.

By Lemmas \ref{l:primes}(i) and \ref{l:R8}, we have $R_8(q)\cap (\pi(K)\cup\pi(\overline G/S))=\varnothing$. Suppose that $i\in J(L)$,  $r_i(q)r_8(q)\not\in \omega(L)$ and some $r\in R_i(q)$ is not large with respect to $S$. By Lemmas \ref{l:primes}(iv) and \ref{l:lm_odd}, it follows $r_8(q), s_1, s_2$ are large with respect to $S$.
Since $s_1r_8(q)\in \omega(S)$ and $J(S)=E(S)$, we see  that $s_1,r_8(q)\in R_j(\tau u)$ for some $j$. The same is true for $s_2$ and $r_8(q)$, and we get a~contradiction since $s_1s_2\not\in\omega(L)$.

In Table \ref{tab:t(L)+1luo}, for every $L$ under consideration, we give $I$ and $d$ such that
$I\subseteq \{i\in E(L)\mid r_8(q)r_i(q)\not\in \omega(L)\}$ and $k_i(q)\geq \Phi_i(q)/d$ for every $i\in I$. Writing $\pm i$ for odd $i$, we mean that both $i$ and $2i$ lie in $I$. Choose $i\in I$ with minimal $\varphi(i)$ and set $j=j_l$. Then \begin{equation}\label{e:k(I)} 4q^{\varphi(i)}/5d<k(I)<2u^{\varphi(j)},\end{equation} and using  \eqref{e:dif1}, we conclude that $u^a<5d/(2\cdot 2^{\varphi(i)/2})$, where $a=3\varphi(i)/2-\varphi(j)$.

\begin{table}[h]
\caption{Indices for Lemma \ref{l:t(S)=t(L)+1luo}}\label{tab:t(L)+1luo}
$\begin{array}{|c|c|c|c|c|c|c|c|c|c|c|c|}
\hline
t(L)&m&\varphi(j_1),\varphi(j_2),\dots&L&I&d&a&\left\lfloor{\frac{5d}{2\cdot 2^{\varphi(i)/2}}}\right\rfloor\rule{0pt}{12pt} \\[2pt]
\hline
8&17&16,12,10,8,8&S_{18}, S_{20} &\pm 9,16,\pm 7&7&1&2\\
9&19&18,16,12,10,8&S_{22},  O_{22}^\varepsilon&\varepsilon 11, 20, \pm9, 16&11&1&3\\
  &  &           &O_{24}^+&\pm 11, 20, \pm 9&11&1&3\\
10&21&18,16,12& O_{26}^\varepsilon, O_{28}^+&\varepsilon13, \pm 11&13&3&1\\
11&23&22,18,16,12&S_{26}, S_{28}, O_{28}^- &\pm 13, \pm 11&13&3&1\\
12&25&22,20,18&S_{30}, O_{30}^\pm, O_{32}^+&28, \pm 13&13&0&0\\
13&27&22,20&O_{34}^\varepsilon, O_{36}^+&\varepsilon17,32&17&4&0\\
\hline
\end{array}$
\end{table}

The table shows that either $m=17$ and $u=2$, or $m=19$ and $u\leq 3$. It follows from \eqref{e:k(I)} that $q<5$ if $u=2$, and $q\leq 7$ if $u=3$, which contradicts \eqref{e:dif1}.
\end{proof}

\section{Proof of Theorem \ref{t:main}: Case $t(S)=t(L)$}

We keep notation and agreements of the previous two sections. In particular, we omit the groups $O_{2n+1}(q)$ in the tables and write $-i$ instead of $2i$ for odd indices $i$.

If $t(S)=t(L)$, we does not have the inequality \eqref{e:dif1}. We will replace it with the following argument. Suppose that for some $i\in J'(p,L)$, we have proved that there is $j\in J'(v,S)$ such that $R_i(q)\subseteq R_j(u)$. Then $k_i(q)$ divides $k_j(u)$. Since $k_j(u)$ divides $m_j(u)$, Lemma \ref{l:disjoint}(ii) implies that $m_j(u)$ divides $m_i(q)$, and this is a replacement for \eqref{e:dif1}.

\begin{lemma}\label{l:t(S)=t(L)lue} If $S=L_m^\tau(u)$, where $m$ is even, then $t(S)\neq t(L)$.\end{lemma}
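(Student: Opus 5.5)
We argue by contradiction: assume $S=L_m^\tau(u)$ with $m$ even and $t(S)=t(L)$. Table~\ref{tab:6tL14} then gives $m=2t(L)$, so $12\leq m\leq 26$ and $J(v,S)=\{\tau m,\tau(m-1)\}$, where $-k$ stands for $\nu(k)$. Instead of \eqref{e:dif1}, we will use the divisibility $m_j(u)\mid m_i(q)$ described before the lemma, together with the size estimates from Lemmas~\ref{l:hering} and~\ref{l:orders}. The analysis splits according to whether some $R_j(q)$ with $j\in J'(p,L)$ lands inside $R_m(\tau u)$.

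\emph{Case 1: no $R_j(q)$, $j\in J'(p,L)$, lies in $R_m(\tau u)$.} Lemma~\ref{l:lm_even} applies. It gives that $u-\tau$ divides $m$, that $m-1$ is composite (so $m\in\{16,22,26\}$), and that for $i\in J'(2,L)$ we have $k_i(q)=k_{m-1}(\tau u)$ and $m_i(q)=(u^{m-1}-\tau)/(u-\tau)$. Since $u-\tau$ divides $m$, only finitely many $\tau u$ are possible. The values $m_i(q)$ are given explicitly in Table~\ref{tab:mi} as $(q^n\pm1)/2$, $(q^n-\varepsilon)/4$, or $(q^{n-1}+1)(q+\varepsilon)/(4,q-\varepsilon)$. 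Comparing the cyclotomic factorizations of both sides, I would show the following. If $m_i(q)=(q^n\pm1)/2$ with $n$ odd, then $m_i(q)$ has the factor $(q\mp1)/2$ or a factor $k_d(\pm q)$ with $d\mid n$ small. Such a prime $r$ has $t(r,L)$ large, which contradicts the bound on $t(r,S)$ for $r\in R_d(\tau u)$ (use \cite[Lemma~3.9]{26Sta.t}, as in the proof of Lemma~\ref{l:t(S)=t(L)+1lue}). Any remaining subcases reduce to an equation in $u$ from the finite list, which can be checked by hand, as in the $m=26$ case above.

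\emph{Case 2: some $R_i(q)\subseteq R_m(\tau u)$ with $i\in J'(p,L)$.} Then $k_i(q)\mid k_m(\tau u)$, and by Lemma~\ref{l:disjoint}(iii) $m_m(u)=(u^m-1)/((u-\tau)(m,u-\tau))$ divides $m_i(q)$. Comparing sizes, $m_i(q)$ is roughly $q^{n}$ or $q^{n/2}$ depending on type, while $m_m(u)$ is roughly $u^{m-1}$. The prime graph $GK(L)$ has, by Lemmas~\ref{l:disjoint} and~\ref{l:unique}(i), a second index $j\in J'(p,L)$ or $j\in\{n-1,2n-2\}$ whose primes have neighbourhoods disjoint from $r_i(q)$. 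Hence $R_j(q)\subseteq R_{m-1}(\tau u)$ (if $u>3$ and $u-\tau\nmid m$), so $k_j(q)\mid k_{m-1}(\tau u)$ and $m_{m-1}(u)\mid m_j(q)$. These two divisibilities give the inequality pair
\[
u^{m-1}\lesssim q^{\deg m_i},\qquad q^{\varphi(i)}\lesssim u^{\varphi(m)}.
\]
I expect this pair to be inconsistent in most rows of Table~\ref{tab:6tL14}. I would organize this check as a table indexed by $t(L)$, in the same style as the tables in the previous section. Where the two sides balance exactly, Lemma~\ref{l:tight}(ii),(iii) supplies the precise contradiction for odd $n$. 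Lemma~\ref{l:tight}(i) covers $L=S_{2n}(q)$ or $O_{2n}^-(q)$ with $n\in\{10,12,14\}$ even. There Lemma~\ref{l:s2n_even} forces $k_{2n}(q)$ to land in the even-index set, and the relevant comparison becomes $(u^{n}+1)/(2,u-1)$ against $(q^n+1)/2$.

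The main obstacle will be matching the degrees of $m_i(q)$ and $m_m(u)$ closely enough. When $\varphi(i)$ and $\varphi(m)$ are close, the crude size bounds leave room. In that situation we need the exact divisibility $m_m(u)\mid m_i(q)$, and the tight cases of Lemma~\ref{l:tight} are exactly designed for this. The small values $u\in\{2,3\}$, and the situations where $u-\tau$ divides $m$ so that Lemma~\ref{l:unique}(i) loses its second statement, will require separate finite checks using Lemma~\ref{l:hering} estimates with $q\geq3$.
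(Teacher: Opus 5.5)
Your outline matches the paper's proof. You use the same split according to whether some $R_i(q)$ with $i\in J'(p,L)$ lies in $R_m(\tau u)$. In that case (your Case 2), the two relations $k_i(q)\mid k_m(\tau u)$ and $m_m(u)\mid m_i(q)$ give $q^{\varphi(i)}\lesssim u^{\varphi(m)}$ and $u^{m-1}\lesssim (m,u-\tau)q^n$. Otherwise (your Case 1), Lemma~\ref{l:lm_even} leaves $m\in\{16,22,26\}$ and finitely many $\tau u$ with $u-\tau\mid m$, to be checked directly. Several side remarks need correcting, though.

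\textbf{Your Case 2.}
\begin{itemize}
\item The second index $j$ and Lemma~\ref{l:unique}(i) play no role: the inequality pair comes from $i$ alone.
\item Every $m_i(q)$ in Table~\ref{tab:mi} has size about $q^n$, never $q^{n/2}$.
\item With these bounds, the exponent $(m-1)-n\varphi(m)/\varphi(i)$ is at least $3$ in every row, so no ``balanced'' situation ever arises. This matters, because Lemma~\ref{l:tight} and Lemma~\ref{l:s2n_even}(i),(ii) concern symplectic and orthogonal $S$ and give nothing for $S=L_m^\tau(u)$. Your proposed fallback would therefore have failed had it been needed.
\item The only non-immediate leftover is $m=16$, $L=S_{18}(q)$ or $O_{19}(q)$, $i\in\{9,18\}$, $u\le 7$. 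The paper disposes of it by noting that $k_{16}(u)$ has no prime divisor $\equiv 1\pmod 9$.
\end{itemize}

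\textbf{Your Case 1.} The $t(r,\cdot)$ argument does not work as stated.
\begin{itemize}
\item Primes dividing $(q\mp1)/2$ are adjacent to almost everything in $GK(L)$, so $t(r,L)$ is small for them.
\item Even for primes of $\Phi_3(\epsilon q)$ when $m=16$ and $L=S_{18}(q)$, such a prime may lie in $R_5(\tau u)$. There $t(r,S)=5=t(r,L)$, so no contradiction results.
\end{itemize}
What actually works is your fallback: a direct check over the finitely many $\tau u$. The paper does this efficiently. For $m=22,26$ it uses that every prime divisor of $k_{m-1}(\tau u)=k_j(q)$ must be $\equiv 1\pmod j$. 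For $m=16$ it verifies that $2(u^{15}-\tau)/(u-\tau)\pm1$ is never $q^9$ or $q^{10}$.
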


\begin{proof}
Assume the opposite. Suppose that there is $i\in J'(p,L)$ such that $R_i(q)\subseteq R_m(\tau u)$. Then $k_i(q)$ divides $k_m(\tau u)$ and
$(u^m-\tau)/((u-\tau)(m,u-\tau))$ divides $m_i(q)$.
In Table \ref{tab:t(L)lue}, we give possible values of $i$  and also $d$ such that $k_i(q)\geq \Phi_i(q)/d$ (for brevity, if $t(p,L)=\{n,2n\}$, then we write only $n$, and  if $t(p,L)=\{\varepsilon n, 2n-2\}$, we write $n$ and $2n-2$).

\begin{table}[h]
\caption{Indices for Lemma \ref{l:t(S)=t(L)lue}}\label{tab:t(L)lue}
$\begin{array}{|c|c|c|c|c|c|c|c|c|c|c|c|}
\hline
t(L)&m&L&i&d&a&b&L&i&d&\min a&\max b\\
\hline
6&12&S_{14},  O_{16}^+&7&7&19/3&35&O^\pm_{14}&7,12&7,1&4&35\\
7&14& O_{20}^+&9&3&4&27 &O_{18}^\pm&9,16&3,2&4&27\\
8&16&S_{18} &9&3&3&27&S_{20}, O_{20}^-&20&5&5&30\\
9&18&S_{22},  O_{24}^+&11&11&52/5&47&O_{22}^\pm&11,20&11,5&35/4&47\\
10&20&O_{28}^+&13&13&31/3&53&S_{24}, O_{24}^-, O_{26}^\pm&13,24&13,1&6&53\\
11&22&S_{26} &13&13&61/6&53&S_{28}, O_{28}^-&28&1&28/3&4\\
12&24&S_{30}, O_{32}^+&15&1&8&8&O_{30}^\pm&15,28&1,1&8&8\\
13&26&O_{36}^+&17&17&49/4&66& O_{34}^\pm&17,32&17,2&49/4&66\\
\hline
\end{array}$
\end{table}

It follows that \begin{equation}\label{e:le} 2q^{\varphi(i)}/3d<2u^{\varphi(m)}.\end{equation}
According to Table~\ref{tab:mi}, we see that $m_i(q)<q^n$.
On the other hand, $$\frac{u^m-\tau}{(u-\tau)(m,u-\tau)}\geq\frac{u^m+1}{(u+1)(m,u-\tau)}\geq\frac{2u^{m-1}}{3(m,u-\tau)}.$$
Therefore,  $u^{m-1}\leq 3(m,u-\tau)q^n/2$ and hence
$$ u^{(m-1)-n\varphi(m)/\varphi(i)}<3(m,u-\tau)(3d)^{n/\varphi(i)}/2. $$

Set  $a=(m-1)-n\varphi(m)/\varphi(i)$ and $b=\left\lceil{(3d)^{n/\varphi(i)}}\right\rceil$.

Suppose that $a\geq 8$. It follows from the table that $u^8<3\cdot 13\cdot 66$, which yields $u=2$. If $m\neq 18$, then
$(u-\tau,m)=1$ and hence $u^8<3\cdot 33$, while if $m=18$, then $u^{8}<(9/2)\cdot 47$, a contradiction. Suppose that $6\leq a<8$. Then $u^6<3\cdot 11\cdot 53$, whence $u\leq 3$ and $(m,u-\tau) \leq 4$. So $u^6<6\cdot 53$ and therefore $u=2$. Similarly, $u=2$ if $a=5$ and $u\leq 3$ if $a=4$.
Now \eqref{e:le} implies that $q<3$ if $u=2$ and $q<5$ if $u=3$, a contradiction with $q\neq u$.

If $a=3$, then $m=16$,  $i\in\{9,18\}$  and $b=27$. The inequality $u^3<3(m,u-\tau)\cdot27$ implies that $u\leq7$. Recall that $k_i(q)$ divides $k_m(\tau u)$, so  $k_{16}(u)=(u^8+1)/(2,u-1)$ must have a prime divisor congruent to $1$ modulo $9$. If $u=2$ or $4$, then $k_{16}(u)$ is a Fermat prime. If $u=3,5,7$, then  $k_{16}(u)\in\{17\cdot193,17\cdot11489, 17\cdot169553\}$, a contradiction.

Thus we may assume that there is no $i\in J'(p,L)$ such that $R_i(q)\subseteq R_m(\tau u)$. By Lemma \ref{l:lm_even}, it follows that $m\in\{16,22,26\}$, $u-\tau$ divides $m$, $k_j(q)=k_{m-1}(\tau u)$ and $m_j(q)=(u^{m-1}-\tau)/(u-\tau)$ for $j\in J'(2,L)$.

If $m=22$, then $L$ is one of $S_{26}(q)$, $O_{27}(q)$, $S_{28}(q)$, $O_{29}(q)$, $O_{28}^-(q)$ and so $j\in\{13,26,28\}$ by Lemma~\ref{l:J(p,L)}.
Also $\tau{u}\in\{2,3,23\}$ because $u-\tau$ divides 22. Since $k_{21}(2)=337$, $k_{21}(3)=368089$, and $k_{21}(23)=43\cdot170689\cdot408030421$, and
all primes divisors of $k_{21}(\tau u)=k_j(q)$ are congruent to 1 modulo $j$, it follows that $j=28$ and $k_{21}(\tau u)\in\{337, 368089\}$. On the other hand, $k_{28}(3)=478297$ and $k_{28}(q)=\Phi_{28}(q)>4/5\cdot 5^{12}>368089$ for $q\geq5$, a contradiction.

If $m=26$, then $L=O_{34}^\pm(q)$ or $O_{36}^+(q)$ and  $j\in\{17,34,32\}$.
Since $u-\tau$ divides 26, we find that $\tau{u}\in\{2,3,-25,27\}$.
Now $601$ divides $k_{25}(2)$ and $k_{25}(27)$, $8951$ divides $k_{25}(3)$, and $424256201$ divides $k_{25}(-25)$, contradicting the fact that every prime divisor of $k_{25}(\tau u)$ must be congruent to 1 modulo $j$.

If $m=16$, then $L$ is one of the groups $S_{18}(q)$, $O_{19}(q)$, $S_{20}(q)$, $O_{21}(q)$, $O_{20}^-(q)$ and $j\in\{9,18,20\}$.
According to Table~\ref{tab:mi}, we see that $m_j(q)=(q^9\pm1)/2$ or $(q^{10}+1)/2$. Therefore, $2m_{15}(\tau{u})\pm1\in\{q^9,q^{10}\}$.
Since $u-\tau$ divides 16, we get that $\tau{u}\in\{2,3,-3,5,-7,9,17\}$.
Now it is easy to verify that  $2m_{15}(\tau{u})\pm1\neq q^9,q^{10}$.
\end{proof}

\begin{lemma}
If $S$ is one of the groups $O_{2m+1}(u), S_{2m}(u), O_{2m+2}^+(u)$, $O_{2m}^\tau (u)$, where $m$ is odd,
then $t(S)\neq t(L)$.
\end{lemma}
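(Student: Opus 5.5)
We argue by contradiction, following the template of Lemma~\ref{l:t(S)=t(L)lue}: assume $t(S)=t(L)$ with $S$ as in the statement, $m=\prk(S)$ odd. By Table~\ref{tab:6tL14} and Table~\ref{tab:tL}, for each value $6\leq t(L)\leq 13$ the possible ranks $m$ are determined (for instance $m=7$ when $t(L)=6$, $m\in\{9,11\}$ for $t(L)=7,8$, and so on). The first step is to place a prime of $J'(2,L)$ into $J(v,S)$. Take $i\in J'(2,L)$ and $r\in R_i(q)$. By Lemma~\ref{l:reduction}(ii), $r$ is coprime to $|K|\cdot|\overline G/S|$, and $r$ is nonadjacent to $2$ in $GK(S)$. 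Hence $e(r,u)\in J(2,S)\subseteq J(v,S)=\{m,2m\}$ in the symplectic and $O_{2m+2}^+$ cases, or $\{\tau m,2m-2\}$ in the $O_{2m}^\tau$ case (Lemma~\ref{l:J(p,L)}). By Lemma~\ref{l:trans}, the whole set $R_i(q)$ then lies in one $R_j(u)$ with $j\in J'(v,S)$, since the two classes have disjoint neighbourhoods by Lemma~\ref{l:disjoint}(i). Therefore $k_i(q)$ divides $k_j(u)$, and, as noted at the start of this section, $m_j(u)$ divides $m_i(q)$, using Lemma~\ref{l:disjoint}(iii) and the uniqueness of the maximal order containing $r$.

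The second step splits according to the parity of $n=\prk(L)$.

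\emph{Case $n$ even}, so $L\in\{S_{2n},O_{2n+1},O_{2n}^-\}$. Lemma~\ref{l:s2n_even}(i) forces $u\in\{2,3\}$ or $S=O_{2m}^+(5)$. Then $k_{2n}(q)$ divides $k_j(u)$, with $j$ one of the few indices $m,2m,2m-2$, and these $k_j(u)$ are explicit numbers. Every prime divisor of $k_{2n}(q)$ is $\equiv1\pmod{2n}$. So I would factor $k_j(u)$ for $u\in\{2,3,5\}$ and $m\leq 19$, and check either that no prime factor is $\equiv 1\pmod{2n}$, or that $k_{2n}(q)>k_j(u)$ for all $q\geq3$ (Lemma~\ref{l:hering}).

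\emph{Case $n$ odd.} Here both $L$ and $S$ have $|J(p,\cdot)|=2$, and $m_i(q)$, $m_j(u)$ are of the forms in Table~\ref{tab:mi}. The divisibility $k_i(q)\mid k_j(u)$ gives a lower bound on $u$ in terms of $q$, via Lemmas~\ref{l:kn} and~\ref{l:hering}. The divisibility $m_j(u)\mid m_i(q)$ gives an upper bound. Comparing ranks through Table~\ref{tab:6tL14}, the possibilities are $m=n$ or $\varphi$-compatible pairs such as $i=n$, $j=2m-2$.

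For $i,j$ both odd-type with equal ranks, this is exactly the setting of Lemma~\ref{l:tight}(ii). For the mixed situation, where $r_n(\pm q)$ goes to $2m-2$ and $r_{2n-2}$ goes to $m$, we apply the argument to both elements of $J(p,L)$ simultaneously, using that the two classes $R_{\varepsilon n}(q)$ and $R_{2n-2}(q)$ must map to the two classes of $J(v,S)$ by Lemma~\ref{l:unique}(iii),(iv); Lemma~\ref{l:tight}(iii) is designed for this. When the types give $i=2n$ with $n\in\{10,12,14\}$, Lemma~\ref{l:tight}(i) applies. In each case the lemma says the required divisibility of maximal orders fails, giving the contradiction.

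The main obstacle is the case analysis when the ranks differ, i.e. when $m\neq n$ (for instance $L=O_{2n}^\varepsilon$ with $S=S_{2m}$, $m=n\pm1$). Here $\varphi(i)$ and $\varphi(j)$ need not match, so Lemma~\ref{l:tight} does not apply verbatim. I would handle these as in the previous lemma: a table of $(i,d,a,b)$ giving $u^{a}<C\,b$ from $k_i(q)<2u^{\varphi(j)}$ and $m_j(u)\leq m_i(q)<q^n$. This forces $u\leq 3$, and then finitely many explicit checks of $k_j(u)$ finish it. The small exceptional $u\in\{2,3,5\}$, where Lemma~\ref{l:unique} is weaker, would also be finished by explicit factorization.
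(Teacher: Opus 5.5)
Your set-up matches the paper's. For $i\in J(2,L)$ the whole of $R_i(q)$ lands in one $R_j(u)$ with $j\in J(2,S)\subseteq J(v,S)$, which gives $k_i(q)\mid k_j(u)$ and $m_j(u)\mid m_i(q)$. The even-$n$ case goes through Lemma~\ref{l:s2n_even}(i) plus explicit arithmetic, and the straight odd case through Lemma~\ref{l:tight}(ii).

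The odd case is not closed, however. First, Table~\ref{tab:6tL14} forces $m=n$ whenever $n$ is odd, so the rank mismatch you call the main obstacle never occurs. The real obstacle is the crossed configuration at $n=17$, i.e.\ $t(L)=13$:
\begin{itemize}
\item $L=O_{34}^{\varepsilon}(q)$ and $S=O_{34}^{\tau}(u)$, with $u>3$ and $\tau u\neq +5$;
\item $R_{32}(q)\subseteq R_{\tau 17}(u)$ and $R_{\varepsilon 17}(q)\subseteq R_{32}(u)$.
\end{itemize}
Lemma~\ref{l:tight}(iii) is stated only for $7\leq n\leq 15$, and its proof cannot be pushed to $n=17$. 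It needs $\varphi(i)>\varphi(j)$ for $\{i,j\}=\{n,2n-2\}$, whereas $\varphi(17)=\varphi(32)=16$. Your fallback of magnitude tables fails for the same reason. The divisibilities $k_{32}(q)\mid k_{17}(\tau u)$ and $m_{\tau 17}(u)\mid m_{32}(q)$ only give $q^{16}<4u^{16}$ and $u^{17}<3q^{17}$, and these are compatible when $u$ is close to $q$. So your claim that in each case the lemma shows the required divisibility fails is false at $t(L)=13$.

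What is missing there is an arithmetic step, which the paper supplies as follows.
\begin{itemize}
\item The two bounds force $k_{32}(q)=k_{17}(\tau u)$ exactly. If $17\mid u-\tau$, or if the quotient is nontrivial, you lose a factor of at least $17$. That gives $q^{16}/2<2u^{16}/17$, contradicting $u^{17}<3q^{17}$.
\item Hence $(q^{16}+1)/2=(u^{17}-\tau)/(u-\tau)$. Subtracting $1$ gives $(q^{16}-1)/2=u(u^{16}-1)/(u-\tau)$.
\item $r_{16}(q)$ is large for $L$, so it is large for $S$ by Lemma~\ref{l:primes}(iii). The only $S$-large primes dividing the right-hand side lie in $R_{16}(u)$.
\item Therefore $k_{16}(q)$ is a proper divisor of $k_{16}(u)$. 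This gives $q^8/2<2u^8/17$, again contradicting $u^{17}<3q^{17}$.
\end{itemize}

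A smaller point concerns the even case. Your test (no prime factor $\equiv1\pmod{2n}$, or $k_{2n}(q)>k_j(u)$ for all $q$) is not sufficient. For example, $k_{13}(3)=797161$ is a prime $\equiv1\pmod{24}$ that is larger than both $k_{24}(3)$ and $k_{24}(5)$. In such cases you must check actual non-divisibility, for instance by noting that primality would force $k_{24}(q)=797161$, which is impossible.
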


\begin{proof}
Assume the opposite. Recall that $J(v,S)$ are equal to $\{m,2m\}$ or $\{\tau m, 2m-2\}$ by Lemma~\ref{l:J(p,L)}.

Let $L$ be one of the groups $S_{2n}(q)$, $O_{2n+1}(q)$, or $O_{2n}^-$ with $n$ even. Then $n=10$, $12$, or $14$. By Lemma \ref{l:s2n_even}(i), either $u=2,3$ or $S=O_{2m+2}^+(5)$. Since $2n\in J(2,L)$, it follows that there is $j\in \{m,2m, 2m-2\}$ such that $k_{2n}(q)$ divides $k_j(u)$ and, in particular, $k_j(u)$ has a prime divisor congruent to $1$ modulo $2n$.

Suppose that $n=10$. Using Table~\ref{tab:6tL14}, we see that $S$ is one of the groups $O_{19}(u)$, $S_{18}(u)$, where $u\in\{2,3\}$. Then $j=9$ or 18,
so $k_{20}(q)$ divides $k_9(2)=73$, $k_{18}(2)=19$,  $k_9(3)=757$, or
$k_{18}(3)=19\cdot 37$, a contradiction.

Suppose that $n=12$. Then $S$ is one of the groups $O_{26}^\pm(u)$, where $2\leq u\leq3$, or $O_{28}^+(u)$, where $u\in\{2,3,5\}$, and $j\in\{13,26,24\}$. If $j=13,26$, then $k_j(u)$ has a prime divisor that is congruent to 1 modulo 24 only if $j=13$ and $u=3$.
Since $k_{13}(3)=797161$ is prime, we have
$q^8-q^4+1=k_{24}(q)=797161$, which is impossible. If $j=24$, then $k_{24}(q)$ divides $k_{24}(u)$ and $k_{24}(u)$ is a prime from the set $\{241,6481,390001\}$. If $q>u$,
then $k_{24}(q)>k_{24}(u)$, while if $q<u$, then
$k_{24}(q)$ does not divide $k_{24}(u)$, a contradiction.

Suppose that $n=14$. It follows that $S$ is one of the groups $O_{27}(u)$, $S_{26}(u)$, where $u\in\{2,3\}$. Then $j=13$ or $26$,
so $k_{28}(q)$ divides $k_{13}(2)=8191$, $k_{26}(2)=2731$,  $k_{13}(3)=797161$, or $k_{18}(3)=398581$.
On the other hand, $k_{28}(3)=478297$ and $k_{28}(q)>(4/5)5^{12}=195312500$ for $q\geq5$, a contradiction.

Let $L$ be one the groups $S_{2n}(q)$, $O_{2n+1}(q)$, $O_{2n+2}^+(q)$ or $L=O_{2n}^\varepsilon(q)$, where $n$ is odd. Using Table \ref{tab:6tL14}, we get that $7\leq n\leq 17$ and $m=n$. Recall that $J(p,L)$ and $J(v,S)$ are equal to $
\{n,2n\}$ or $\{\varepsilon n, 2n-2\}$. By Lemma \ref{l:reduction}(ii), if $i\in J(2,L)$,  then there is $j\in J(2,S)$ such that $k_i(q)$ divides $k_j(u)$ and $m_j(u)$ divides $m_i(q)$.

If $i\in \{n,2n\}$ and $j\in \{n,2n\}$, then $k_n(\epsilon_1q)$ divides $k_n(\epsilon_2 u)$ for some $\epsilon_1, \epsilon_2\in\{+,-\}$ and $(u^n-\epsilon_2)/(4,u-\epsilon_2)$ divides $(q^n-\epsilon_1)/2$, contradicting Lemma \ref{l:tight}(ii). In particular, at least one of the sets  $J(p,L)$ and $J(v,S)$ is not $\{n,2n\}$.

If $u>3$ and $S\neq O_{2n}^+(5)$, then by Lemmas~\ref{l:disjoint}(i) and \ref{l:unique}, for every $i\in J(p,L)$, there is $j\in J(v,s)$ such that $k_i(q)$ divides $k_j(u)$. So either we are in the situation of the preceding paragraph, or $L=O_{2n}^\varepsilon(q)$, $S=O_{2n}^\tau(u)$, and $k_{2n-2}(q)$ divides $k_n(\tau u)$, $k_n(\varepsilon q)$ divides $k_{2n-2}(\tau u)$ and  $(u^n-\tau)/(4,u-\tau)$ divides $(q^{n-1}+1)(q+\varepsilon)/(4,q-\varepsilon)$. This contradicts Lemma \ref{l:tight}(iii) if $n\neq 17$.

Let $n=17$. Then $k_{32}(q)$ divides $k_{17}(\tau u)$  and $(u^{17}-\tau)/(4,u-\tau)$ divides $(q^{16}+1)(q+\varepsilon)/(4,q-\varepsilon)$. The latter implies that $u^{17}<3q^{17}$. If $(17,u-\tau)=17$ or $k_{32}(q)$ is a proper divisor of $k_{17}(\tau u)$, then $q^{16}/2<2u^{16}/17$, which contradicts the previous inequality. Hence $(q^{16}+1)/2=(u^{17}+\tau)/(u+\tau)$ and subtracting 1 from the both sides yields $$\frac{q^{16}-1}{2}=\frac{u(u^{16}-1)}{u+\tau}.$$ Note that $r_{16}(q)$ is large with respect to $L$, and so with respect to $S$. On the other hand, the only prime divisors of the right side that are large with respect to $S$ are $r_{16}(u)$. So  $R_{16}(q)\subseteq R_{16}(u)$ and $k_{16}(q)$  divides $k_{16}(u)$. It is easily seen that $k_{16}(q)\neq k_{16}(u)$, and therefore $q^8/2<2u^8/17$, which contradicts to the inequality $u^{17}<3q^{17}$.

Thus we are left with the case when either $u=2, 3$, or $S=O_{2n}^+(5)$.
Suppose first that $\epsilon n$ belongs to $J(2,L)$ for some $\epsilon\in\{+,-\}$. By the above discussion, we may assume that $k_n(\epsilon q)$ divides  $k_{2n-2}(u)$.

If $n\neq9,15$, then $\varphi(n)\geq\varphi(2n-2)$, so $(q-1)q^{\varphi(n)-1}<2u^{\varphi(n)}$. If $q>u$, we get a contradiction since $(q-1)q^2>2u^3$ for $u\in\{2,3,5\}$. If $q<u$, then $q=3$ and $u=5$, so $m_{2n-2}(5)=(5^{n-1}+1)(5+1)/4$ is divisible by 3 and hence does not divide $m_n(\epsilon3)$.

If $n=9$, then $k_{2n-2}(u)\in\{257, 17\cdot 193, 17\cdot 11489\}$, so $k_{2n-2}(u)$ is not divisible by $k_n(\epsilon q)$.
If $n=15$, then $k_{2n-2}(u)\in\{29\cdot113, 29\cdot 16493,
234750601\}$. We see that only $234750601$ is congruent to 1 modulo 15
and if $k_{15}(\epsilon q)=234750601$,
then $q^4-1$ divides $\Phi_{15}(\epsilon q)-1=k_n(\epsilon q)-1=234750600$. This is a contradiction since $16$ divides $q^4-1$.

Thus $J(2,L)=\{2n-2\}$, and so $L=O_{2n}^\varepsilon(q)$, where $q\equiv\varepsilon\pmod{8}$.
In particular, $q\geq7>u$. Let $n'\in\{n,2n-2,2n\}$ such that $k_{2n-2}(q)$ divides $k_{n'}(u)$. If $\varphi(2n-2)\geq\varphi(n')$, then
$$k_{2n-2}(q)>(2/3)7^{\varphi(n')}>2\cdot5^{\varphi(n')}>k_{n'}(u).$$ Therefore,
we find that $n=7,11,13$ and $n'\in\{n,2n\}$. If $u\leq 3$,
then $$k_{2n-2}(q)>(6/7)7^{\varphi(2n-2)}>2\cdot3^{\varphi(n)}>k_{n'}(u).$$
So $u=5$. If $n=7$, then $k_{n'}(u)\in\{19531, 29\cdot449\}$
and all involved primes are not congruent to 1 modulo 12.
By a similar reason $n\neq 13$, for otherwise $k_{n'}(u)\in\{ 5227\cdot 38923, 305175781\}$. Finally, if $n=11$, then $k_{n'}(u)\in\{12207031, 23\cdot 67\cdot 5281\}$, so $k_{20}(q)=5281$ which is impossible since $k_{20}(q)>(6/35)7^8$.
\end{proof}

\begin{lemma}\label{l:t(S)=t(L)se}
If $S=S_{2m}(u), O_{2m+1}(u),  O_{2m}^-(u)$, where $m$ is even, then $t(S)\neq t(L)$.
\end{lemma}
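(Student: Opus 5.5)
Assume the opposite, so $S$ is $S_{2m}(u)$, $O_{2m+1}(u)$ or $O_{2m}^-(u)$ with $m$ even and $t(S)=t(L)$. By Lemma~\ref{l:J(p,L)}(ii),(v) and the definition of $J'$, we have $J'(v,S)=J'(2,S)=\{2m\}$, and $m_{2m}(u)=(u^m+1)/(2,u-1)$ by Table~\ref{tab:mi}. Table~\ref{tab:6tL14} together with $t(S)=t(L)$ confines $m$ to a short list for each value of $t(L)$. For instance, $t=7$ gives $m=8$; $t=8$ gives $m\in\{10\}$ or $S_{20}$-type $m=10$; $t=9$ gives $m=12$; $t=11$ gives $m=14$; $t=13$ gives $m=16$. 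The graph $GK(S)$ is connected, which removes $m=8,16$ when $S$ is symplectic or $O^-$.

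I split the argument according to the type of $L$.

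\emph{Case 1: $L$ is $S_{2n}(q)$, $O_{2n+1}(q)$ or $O_{2n}^-(q)$ with $n$ even.} By Lemma~\ref{l:s2n_even}(ii), $k_{2n}(q)$ divides $k_{2m}(u)$. I expect $n=m$ here, because the same $t$ and the same type force this. Then $R_{2n}(q)\subseteq R_{2n}(u)$, and by the replacement argument $m_{2n}(u)$ divides $m_{2n}(q)$, that is,
\[
\frac{u^n+1}{(2,u-1)}\ \Big|\ \frac{q^n+1}{2}.
\]
Lemma~\ref{l:tight}(i) contradicts this for $n\in\{10,12,14\}$. It covers exactly the values of $n$ with connected prime graph, since $n=8,16$ were already excluded.

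\emph{Case 2: $L$ is $S_{2n}(q)$, $O_{2n+1}(q)$, $O_{2n+2}^+(q)$ or $O_{2n}^\varepsilon(q)$ with $n$ odd.} Every $i\in J(2,L)\subseteq J(p,L)$ has its primes $r_i(q)$ nonadjacent to $2$. By Lemma~\ref{l:reduction}(ii) these primes avoid $K$ and $\overline G/S$, so they lie in $\pi(S)$ and are nonadjacent to $2$ there. Hence $R_i(q)\subseteq R_{2m}(u)$, giving $k_i(q)\mid k_{2m}(u)$ and $(u^m+1)/(2,u-1)\mid m_i(q)$.

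Next I use the two elements of $J(p,L)$. By Lemma~\ref{l:disjoint}(i) the primes $r_i(q)$ and $r_j(q)$ have disjoint neighbourhoods in $GK(L)$, hence in $GK(S)$. Lemma~\ref{l:unique}(v) then forces one of them into $R_{2m}(u)$. When $(m)_2=2$, it forces the other into $R_{m-1}(u)\cup R_{2(m-1)}(u)\cup R_{2(m-2)}(u)$; this requires $u>3$, and $u\le 3$ is treated separately. So for the other index we also get $k_j(q)$ dividing some $k_{j'}(u)$ with $\varphi(j')$ explicit.

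With these divisibilities in hand, I compare sizes in a table. For each $t(L)$ I record $i$, $d$ with $k_i(q)\ge\Phi_i(q)/d$, and $\varphi(2m)$. Lemma~\ref{l:hering} gives $q^{\varphi(i)}/(2d)<2u^{\varphi(2m)}$. On the other side, $m_i(q)<q^n$ from Table~\ref{tab:mi} together with $(u^m+1)/2\mid m_i(q)$ gives $u^m<2q^n$. Combining the two inequalities leaves only $u\le 3$ or very small $q$.

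For the residual cases, namely $u\in\{2,3\}$ and the few small $(q,u)$, I finish by arithmetic:
\begin{itemize}
\item[] check that $k_{2m}(u)$, which is an explicit number such as $k_{24}(2)=241$ or $k_{20}(3)$, has no prime divisor $\equiv1\pmod i$;
\item[] or show that $k_i(q)$ exceeds $k_{2m}(u)$, as in the preceding lemmas.
\end{itemize}

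The main obstacle is Case 2 when $m\equiv0\pmod 4$. There Lemma~\ref{l:unique}(v) gives no information about the second prime, so only the single relation $k_i(q)\mid k_{2m}(u)$ is available, and it can be loose because $\varphi(2m)=\varphi(m)\cdot2$ is comparatively large. For this case I would first try to combine the lower bound $m_{2m}(u)\mid m_i(q)$ with a second large prime $r_l(q)$, $l\in E(L)$, placed via Lemma~\ref{l:trans}. Primes of $R_{2m}(u)$ are not adjacent to any other large index, so $k_l(q)$ must divide some $k_j(u)$ with $j\in E(S)\setminus\{2m\}$ of smaller $\varphi$, and this bound should sharpen the estimate enough to conclude.
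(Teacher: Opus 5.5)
Your Case~1 is the paper's argument (Lemma~\ref{l:s2n_even}(ii) followed by Lemma~\ref{l:tight}(i)). The gap is in Case~2.

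\textbf{The size comparison does not bound $u$.} You combine $q^{\varphi(i)}\lesssim u^{\varphi(2m)}$ (from $k_i(q)\mid k_{2m}(u)$) with $u^m\lesssim q^n$ (from $m_{2m}(u)\mid m_i(q)$). Eliminating $q$ bounds $u$ only when $m\varphi(i)>n\varphi(2m)$. This fails in most cases:
\begin{itemize}
\item $m=8$: here $n=9$, $\varphi(16)=8$ and $\varphi(i)\in\{6,8\}$;
\item $m=10$: here $n=9$, $\varphi(20)=8$ and $\varphi(i)=6$;
\item $m=16$: here $n=17$ and $\varphi(32)=\varphi(i)=16$;
\item $m=12$, when the index landing in $R_{24}(u)$ is $i=24$.
\end{itemize}
Only $m=14$, and $m=12$ with $i=\pm13$, are actually controlled.

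Your repairs do not close this. For $m=10$ and $S=O_{20}^-(u)$, Lemma~\ref{l:unique}(v) allows the second prime to lie in $R_{16}(u)$. Since $\varphi(16)=8$, this again gives nothing.

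For $m\equiv 0\pmod 4$, one extra large prime is not enough either. Take $m=8$, $L=O_{18}^{\pm}(q)$ and $R_{16}(q)\subseteq R_{16}(u)$. Every other index of $E(L)$ has $\varphi\le 6$. All you know about its image is $j\in E(S)\setminus\{16\}$ with $\varphi(j)\le 6$. Then $q^6\lesssim u^6$ and $u^8\lesssim q^9$ are compatible.

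\textbf{The missing idea is a pigeonhole argument over a whole coclique.}
\begin{itemize}
\item Choose $I\subseteq E(L)$ with $|I|=l$. The primes $r_i(q)$, $i\in I$, are large with respect to $S$ and pairwise nonadjacent.
\item By Lemma~\ref{l:trans}, each $R_i(q)$ lies in a single $R_j(u)$. For $S=O_{20}^-(u)$ it may instead lie in $R_{j_1}(u)\cup R_{j_2}(u)$ with $j_1,j_2\in\{5,8,10\}$, and this case must be handled separately.
\item Distinct $i$ go to distinct $j$, because primes in a common $R_j(u)$ are adjacent.
\item List $E(S)$ as $j_1,j_2,\dots$ with $\varphi(j_1)\ge\varphi(j_2)\ge\cdots$. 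Then some $i\in I$ satisfies $k_i(q)\mid k_j(u)$ with $\varphi(j)\le\varphi(j_l)$. Hence $\min_{i\in I}k_i(q)<2u^{\varphi(j_l)}$.
\end{itemize}
Take sets of size $4,4,3,2$ for $m=8,10,12,16$, for example $\{\varepsilon 9,16,\pm7\}$, $\{\pm9,16,7^*\}$, $\{\varepsilon13,\pm11\}$, $\{\varepsilon17,32\}$. Combined with $u^m\le (2,u-1)q^n/2$, this gives a positive exponent of $u$ and reduces everything to a finite check. For $m=10$ an arithmetic residue still remains ($i=16$, $q\in\{9,13\}$).

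\textbf{Smaller errors.}
\begin{itemize}
\item You drop $m=8,16$ by claiming $GK(S)$ is connected. This does not follow from connectivity of $GK(G)=GK(L)$, since $GK(S)$ is only a subgraph. In fact $GK(S)$ is disconnected for these $m$, and the paper treats both cases explicitly.
\item $J(2,S)=\{2m\}$ is false for $S=O^-_{2m}(u)$ with $u$ even. There $J(2,S)=J(v,S)=\{2m,2m-2,m-1\}$ by Lemma~\ref{l:J(p,L)}(v). So $R_i(q)\subseteq R_{2m}(u)$ does not follow from $i\in J(2,L)$. Obtain $k_i(q)\mid k_{2m}(u)$ for some $i\in J(p,L)$ from Lemmas~\ref{l:disjoint}(i) and~\ref{l:unique}(v) instead.
\item Item (v) of Lemma~\ref{l:unique} has no $u>3$ hypothesis, so you do not need a separate treatment of $u\le 3$ there.
\item $m=12$ occurs for $t=10$, not $t=9$.
\end{itemize}
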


\begin{proof}
Assume the opposite. Then $t(S)\geq 7$ and $8\leq m\leq 16$. If $L\in\{S_{2n}(q), O_{2n+1}(q), O_{2n}^-\}$, where $n$ is even,
then $n=10,12,14$, $m=n$ and by Lemma \ref{l:s2n_even}, it follows that $k_{2n}(q)$ divides $k_{2n}(u)$, while $(u^n+1)/(2,u-1)$ divides $(q^n+1)/2$. This contradicts Lemma \ref{l:tight}(i). So $L$ is one of the groups $S_{2n}(q)$, $O_{2n+1}(q)$, $O_{2n+2}^+(q)$ or $L=O_{2n}^\varepsilon(q)$, where $n$ is odd. By Table \ref{tab:6tL14}, we see that $m=n-1$ and $L\neq S_{2n}(q), O_{2n+1}(q)$ if $m\equiv 0\pmod 4$, and $m=n+1$ and $L=S_{2n}(q), O_{2n+1}(q)$ otherwise.

By Lemma~\ref{l:J(p,L)}, we have $|J(p, L)|=2$. It follows from
Lemmas \ref{l:disjoint}(i) and \ref{l:unique}(v) that there is $i\in t(p,L)$ such that $k_i(q)$ divides $k_{2m}(u)$. Then $(u^m+1)/(2,u-1)$ divides $m_i(q)$, which is equal to $(q^n\pm 1)/2$  or $(q^{n-1}+1)(q+\varepsilon)/(4,q-\varepsilon)$ according to Table~\ref{tab:mi}. Note that $(u^m+1)/(2,u-1)$ is odd and hence divides $(q^{n-1}+1)(q+\varepsilon)/4$ even if $(4,q-\varepsilon)=2$. Also $u^m\neq (q^n-1)/2$ if $u$ is even, so \begin{equation} \label{e:ts=tl_se} u^m\leq \frac{(2,u-1)q^n}{2}.\end{equation} In particular, $u=2$ if $q=3$.

Let $m=14$. Then $n=13$ and $k_{13}(\epsilon q)$ divides $k_{28}(u)$ for some $\epsilon \in\{+,-\}$. Since $k_{13}(\pm 3)>k_{28}(2)$, we have $u\neq 2$ and $q\geq 5$. So $4q^{12}/5d<k_{13}(\epsilon q)\leq k_{28}(u)<u^{12}$, where $d=(13,q^2-1)$.  It follows that $u^{14}<(5d/4)^{13/12}u^{13}$, which yields $u\leq 20$. Then $q<25$, so $d=1$ and $u<2$.

For $m\neq 14$, we use the argument from the proof of Lemma \ref{l:t(S)=t(L)+1luo}. Let $I$ and $d$ be as in Table~\ref{tab:t(L)se} (with $7^*$ standing for $\epsilon 7$ such that $(7,q-\epsilon)=1$) and $k(I)=\min_{i\in I} k_i(q)$.
It is easy to see using Table~\ref{tab:tL} that $I\subseteq E(L)$. By Lemma \ref{l:trans}, for every $i\in I$, either $k_i(q)$ divides $k_j(u)$ for some $j\in E(S)$, or $m=10$ and $k_i(q)$ divides $k_{j_1}(u)k_{j_2}(u)$, where $j_1,j_2\in\{5,10,8\}$. In the last case, we set $j=\{j_1,j_2\}$ and $\varphi(j)=8$ and observe that $k_{j}(u)$ is less then $3u^8/2$ unless $u=2$. But if $u=2$, then $q=3$, $k_j(u)\leq 527$ and $k_i(q)\geq 547$, a contradiction. So we can assume that $u\geq3$ in this case.
Let $l=|I|$ and  $j_1$, $j_2$, \dots be elements of $E(S)$ or $E(S)\cup\{{j_1,j_2\}}$ enumerated so $\varphi(j_1)\geq \varphi(j_2)\geq \dots$.

\begin{table}[h]
\caption{Indices for Lemma \ref{l:t(S)=t(L)se}}\label{tab:t(L)se}
$\begin{array}{|c|c|c|c|c|c|c|c|c|c|c|c|}
\hline
m&\varphi(j_1),\dots,\varphi(j_l)&L&I&d&L&I&d&a&b&u&q\\
\hline
8&8,6,6,4,&O_{20}^+&\pm 9, 16,7^*&3&O_{18}^\varepsilon&\varepsilon 9,16,\pm 7&7&2&24&u\leq 5&q=3\\
10&8,8,8,6&S_{18}&\pm 9,16,7^*&3&-&-&-&1&7&u\leq 13&q\leq 17\\
12&10,10,8&O_{28}^+&\pm 13, 11&11&O_{26}^\varepsilon&\varepsilon13,\pm 11&11&8/5&26&u\leq 11&q\leq 9\\%
16&16,12&O_{36}^+&\pm 17&17&O_{34}^\varepsilon&\varepsilon 17,32&17&13/4&20&u=2&\\
\hline
\end{array}
$
\end{table}

It is not hard to verify that $k(I)>2u^{\varphi(j_l)}$ if $u=2$, so we may assume that $u\geq 3$ in all cases, so $q\geq 5$ and $k_{j_l}(u)<(3/2)u^\varphi(j_l)$. Taking $i\in I$ with minimal $\varphi(i)$ and setting $j=j_l$, we conclude that
$4q^{\varphi(i)}/5d<3u^{\varphi(j)}/2$. Together with \eqref{e:ts=tl_se}, this yields
$$u^m\leq (2,u-1)q^n/2< ((2,u-1)/2)\cdot (15d/8)^{n/\varphi(i)}u^{n\varphi(j)/\varphi(i)},$$
whence $u^a<(2,u-1)b$, where $a=m-n\varphi(j)/\varphi(i)$ and $b=\left\lceil{(15d/8)^{n/\varphi(i)}/2}\right\rceil$.
Solving this equation, we get that $u$ is as stated in the corresponding column of Table \ref{tab:t(L)se}, in particular, $m\neq 16$. Then $q^{\varphi(i)}<5du^{\varphi(j)}/2$ implies that  $q$ is as stated in Table \ref{tab:t(L)se}, and $m\neq 8$ since we assume that $u>2$. If $m=12$, then
inequality $u\leq 11$ implies that we can take $d=1$, so new estimates
yield $u<2$, a contradiction.

Thus we are left with the case when $m=10$, $u\leq 13$ and $q\leq 17$.
Recall that $k_i(q)$ divides $k_{20}(u)$, where $i\in\{9,18,16\}$ by Lemma~\ref{l:J(p,L)}. This implies that every prime divisor of $k_i(q)$ is congruent to 1 modulo 20. Sorting out all possible $q$, we find that
$i=16$ and $q=13$ or $q=9$.
Now it is easy to see that $k_i(q)$ does not divide $k_{20}(u)$.
\end{proof}

\begin{lemma}\label{l:t(S)=t(L)luo}
If $S=L_m^\tau(u)$, where $m$ is odd, then $t(S)\neq t(L)$.
\end{lemma}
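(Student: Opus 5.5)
We argue by contradiction, following the pattern of Lemma~\ref{l:t(S)=t(L)lue}, with Lemma~\ref{l:lm_odd} taking the place of Lemma~\ref{l:lm_even}. Since $t(S)=t(L)$ and $m$ is odd, Table~\ref{tab:6tL14} gives $m\in\{11,13,\dots,25\}$, with $m=2t(L)-1$. We have $J(S)=E(S)=\{j\mid m/2<\nu^{\pm}(j)\le m\}$ and $J(v,S)=\{\tau m,\tau(m-1)\}$, where we write $\tau k$ for $\nu(k)$ when $\tau=-$.

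\textbf{Step 1: pin down $i\in J(2,L)$.} Take $i\in J'(2,L)$. By Lemma~\ref{l:reduction}(ii), $R_i(q)$ avoids $|K|\cdot|\overline G/S|$ and lies in $R_j(\tau u)$ with $j\in J(2,S)\subseteq\{m,m-1\}$, so $k_i(q)$ divides $k_j(\tau u)$. As in the previous lemmas, the unique maximal order $m_j(\tau u)$ containing $r_j(\tau u)$ is a divisor of $m_i(q)$, read off from Table~\ref{tab:mi} and \cite[Lemma 4]{08Zav1.t}. Concretely, $(u^m-\tau)/((u-\tau)(m,u-\tau))$ or $(u^{m-1}-1)/(m,u-\tau)$ divides $m_i(q)<q^{n}$, where $n=\prk L$. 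This gives an upper bound $u^{m-1}\lesssim (m,u-\tau)\,q^n$.

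\textbf{Step 2: a lower bound for $u$.} Pick $i'\in J'(p,L)$ with minimal $\varphi(i')$. The primes $r_{i'}(q)$ are large with respect to $L$, so by Lemma~\ref{l:primes}(iii) they divide $|S|$ and are large with respect to $S$. Lemma~\ref{l:trans} then gives $k_{i'}(q)\mid k_j(\tau u)$ for some $j\in E(S)$. Combining Lemmas~\ref{l:kn} and \ref{l:hering} yields $q^{\varphi(i')}/(2d)<2u^{\varphi(m)}$ or $2u^{\varphi(m-1)}$. Together with Step 1, this produces $u^{a}<c$, where $a=(m-1)-n\varphi(m)/\varphi(i')$ and $c$ is an explicit constant. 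I would tabulate $a$ and $c$ row by row, as in Table~\ref{tab:t(L)lue}. In most rows $a$ is large, which forces $u\le3$, and then the inequality for $q$ forces $q\le u$ or $q\le 5$, contradicting $q\ne u$.

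\textbf{Step 3: the residual cases.} Where $a$ is small, typically when $j=m-1$, I would exploit that Lemma~\ref{l:disjoint}(i),(ii) provides two primes $r_i(q),r_{i''}(q)$ with disjoint neighbourhoods in $GK(S)$. Lemma~\ref{l:unique}(ii) then puts one of them in $R_m(\tau u)$ when $u>3$, and this upgrades Step 1 to the divisibility $(u^m-\tau)/((u-\tau)(m,u-\tau))\mid m_i(q)$ with $k_i(q)\mid k_m(\tau u)$. When $L$ is $S_{2n}$, $O_{2n+1}$ or $O^-_{2n}$ with $n$ even, Lemma~\ref{l:s2n_even}(iii) gives $k_{2n}(q)\mid k_m(\tau u)$ directly. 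Once $\varphi(m)$ is compared with $\varphi(2n)$ or $\varphi(n)$, this leaves only bounded $u$. The remaining finitely many pairs $(u,q)$, including $u\in\{2,3\}$, would be killed by congruences: every prime divisor of $k_i(q)$ is $\equiv1\pmod i$. So I would factor $k_m(\pm2)$, $k_m(\pm3)$, $k_{m-1}(\cdot)$ for the relevant $m$ and check that no admissible divisor occurs, as done above with $k_{21}$ and $k_{25}$.

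\textbf{Main obstacle.} The main obstacle is Step 3 when $m-1$ is the image index and $\varphi(m-1)$ is much smaller than $\varphi(m)$, for example $m=15,21,25$, where the crude size estimates are too weak. There I would first try Lemma~\ref{l:lm_odd}-style arguments using $r_8(q)$ and Lemma~\ref{l:R8}, showing that enough indices in $E(L)$ map injectively into $E(S)\setminus\{m-1\}$, and then compare the multisets $\varphi(I)$ and $\varphi(J(S))$ as in Lemma~\ref{l:t(S)=t(L)+1luo}.
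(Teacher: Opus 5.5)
There is a genuine gap in Steps~2--3. The single-index size comparison you propose cannot produce a contradiction in most rows, for two reasons.

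\textbf{Where Steps 2--3 fail.} First, Lemma~\ref{l:trans} only gives $R_{i'}(q)\subseteq R_j(\tau u)$ for \emph{some} $j\in E(S)$. The restriction $j\in\{m,m-1\}$ is available only when $i'\in J(2,L)$, via Lemma~\ref{l:reduction}(ii). For composite $m$, the set $E(S)$ contains indices with $\varphi(j)>\varphi(m)$, e.g.\ $j=13,19,23$ for $m=15,21,25$.

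Second, and more seriously, take $m$ prime, i.e.\ $m\in\{11,13,17,19,23\}$ and $t(L)\in\{6,7,9,10,12\}$. Then $\varphi(m)=m-1$, while every $i\in J'(p,L)$ satisfies $\varphi(i)<n$. So your exponent $a=(m-1)-n(m-1)/\varphi(i)$ is negative even granting $j\in\{m,m-1\}$; for $t(L)=6$ it is $10-70/6$. The constraints $u^{m-1}\lesssim q^n$ and $q^{\varphi(i)}\lesssim u^{m-1}$ are simply compatible, and no constant-chasing changes that. Step~3 does not help: Lemma~\ref{l:unique}(ii) puts one of the two primes into $R_m(\tau u)$, which is exactly the uninformative case $\varphi(j)=m-1$.

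\textbf{The missing idea: use many indices of $E(L)$ at once.}
\begin{itemize}
\item Since $t(S)=t(L)$, Lemma~\ref{l:primes}(iii) already makes every $r_i(q)$, $i\in E(L)$, large with respect to $S$. So Lemmas~\ref{l:lm_odd} and~\ref{l:R8} are not needed; they serve the case $t(S)=t(L)+1$.
\item For $I\subseteq E(L)$ with $|I|=l$, Lemma~\ref{l:trans} sends distinct $i\in I$ to distinct $j\in E(S)$. This is because the $r_i(q)$, $i\in I$, are pairwise nonadjacent, while primes in a common $R_j(\tau u)$ are adjacent.
\item Hence $\min_{i\in I}k_i(q)<2u^{\varphi(j_l)}$, where $\varphi(j_l)$ is the $l$-th largest value of $\varphi$ on $E(S)$.
\item For $t(L)=6$, taking $I=\{\varepsilon7,12,5^*,8\}$ as in Table~\ref{tab:t(L)luo} gives $\varphi(j_4)=4$ and $a=3>0$.
\end{itemize}

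This is how the paper treats every prime $m$, including $m=19$ with $n$ even, where $I=\{24,\pm11,20,16\}$. It also uses it for $m=15$ with $L=S_{18}$. A small residue at $m=17$ is then removed by an explicit divisibility check ending with $k_{20}(9)=42521761$.

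Your single-index argument, with $i\in J(2,L)$ and $j\in\{m,m-1\}$, is what the paper uses only for $m\in\{15,21\}$ with $n$ even and $m\in\{21,25\}$ with $n$ odd. There $\varphi(m)$ and $\varphi(m-1)$ are small enough. So your ``main obstacle'' is in the wrong place: the composite values of $m$ are the easy ones, and the prime values are the ones that need the multiset comparison.
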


\begin{proof}
Assume the opposite. Let $L$ be one of the groups $S_{2n}(q)$, $O_{2n+1}(q)$, $O_{2n}^-(q)$, where
$n$ is even. Then $n\in\{10,12,14\}$ and $m=15,19,21$, respectively. Since $2n\in J(2,L)$, it follows that $k_{2n}(q)$ divides $k_j(\tau u)$ for $j\in\{m,m-1\}$ and so $m_j(\tau u)$ divides $(q^n+1)/2$. The latter yields $u^{m-1}<(m,u-\tau)q^n$

If $m=15$, then $2q^8/15<2u^8$ and so $u^{14}<(15,u-\tau)(15)^{5/4}u^{10}$. It follows that $u=2$ and then $q<3$. Similarly, if $m=21$, then $2q^{12}/3<2u^{12}$, $u^6<(21,u-\tau)3^{7/6}$ and $u<2$.

Let $m=19$. Then $I=\{24, \pm 11, 20,16\}\subseteq J(L)$. Applying Lemma \ref{l:trans} and noting that $\varphi(J(S))=\{18, 6, 16, 8$, $8$, $6, 12, 4, 10,4\}$,
we derive that there are $i\in I$ and $j\in J(S)$ such that $\varphi(j)\leq 8$ and $k_i(q)$ divides $k_j(u)$. Then  $2q^8/33<2u^8$ and so $u^{18}<(19,u-\tau)33^{3/2}u^{12}$, whence $u=2$ and $q=3$. In this case, $k_i(q)\geq 2q^8/15$, which leads to a contradiction.

Let $L$ be one of the groups $S_{2n}(q)$, $O_{2n+1}(q)$, $O_{2n+2}^+(q)$ or $O_{2n}^\varepsilon(q)$, where $n$ is odd. Let $i\in J(2,L)$. Then  $k_i(q)$ divides $k_j(\tau u)$ for  $j\in\{m,m-1\}$ and so $m_j(\tau u)$ divides $(q^n-\epsilon)/2$ for some $\epsilon$ or $(q^{n-1}+1)(q+\varepsilon)/4$. In either case,  $u^{m-1}<(m,u-\tau)q^n$.

If $m=21$, then $n=13$, $i\in\{13,26\}$ and so $2q^{12}/39<2u^{12}$. It follows that $u^{20}<(21,u-\tau)(39)^{13/12}u^{13}$, whence $u^7<(21,u-\tau)(52)^{13/12}$ and $u=2$. Then $q<3$, a contradiction. Similarly, if $m=25$, then $n=17$,  $i\in\{17,34, 32\}$ and hence $2q^{16}/51<2u^{20}$. We have $u^{24}<(25,u-\tau)(51)^{17/16}u^{85/4}$, which yields $u\leq 4$. Then $q\leq 7$ and $q^{16}/2<2u^{20}$.

For $m\neq 21$, we argue as in the proof of Lemma \ref{l:t(S)=t(L)se}. Let $I$ and $d$ be as in Table \ref{tab:t(L)luo} (as before, $r^*$ stands for $\epsilon r$ such that $(r,q-\epsilon)=1$) and $k(I)=\min_{i\in I} k_i(q)$. Then $k(I)>2q^{\varphi(i)}/3d$, where $i\in I$ has minimal $\varphi(i)$, and  $k(I)<2u^{\varphi(j_l)}$, where $l=|I|$ and  $j_1$, $j_2$, \dots are elements of $E(S)$ enumerated so $\varphi(j_1)\geq \varphi(j_2)\geq \dots$.

\begin{table}[h]
\caption{Indices for Lemma \ref{l:t(S)=t(L)luo}}\label{tab:t(L)luo}
$\begin{array}{|c|c|c|c|c|c|c|c|c|c|c|c|}
\hline
t(L)&m&\varphi(j_1),\varphi(j_2),\dots&L&I&d&a&b\\
\hline
6&11&10,6,6,4&S_{14}, O^+_{16}, O_{14}^\varepsilon&\varepsilon 7,12,5^*,8&1&3&7\\
7&13&12,10,6&O_{20}^+,  O_{18}^\varepsilon&\varepsilon 9,16,7^*&3&3&27\\
8&15&12,10,8,6&S_{18}&\pm 9,16,7^*&3&5&27\\
9&17&16,12,10&S_{22}, O_{24}^+, O_{22}^\varepsilon &\varepsilon 11,20,16&5&9/4&42\\
10&19&18,16,12,10,8& O_{26}^\varepsilon, O_{28}^+&\varepsilon 13,24,\pm 11,16&1&5&6\\
12&23&22,18,16,12&S_{30}, O_{30}^\pm, O_{32}^+&\pm 13, 28, 11^*&1&4&6\\
\hline
\end{array}$
\end{table}

Set $j=j_l$. Then $u^{m-1}<(m,u-\tau)(3d)^{n/\varphi(i)}u^{\varphi(j)n/\varphi(i)}$,
whence $u^a<(m,u-\tau)b$, where $a={m-1-\varphi(j)n/\varphi(i)}$ and $b=\left\lceil{(3d)^{n/\varphi(i)}}\right\rceil$. It follows that either $m\leq 15$ and $u=2$, or $m=17$ and $u=16$ or $u\leq 5$.
If $u\neq 16$, then $q^{\varphi(i)}<3du^{\varphi(j)}$ implies that $q<3$ in the first case, and $q\leq 9$ in the second one.

Therefore we are left with the case when $m=17$, $q\leq 9$ and either $u=16$ or $u\leq 5$. We will use the fact that for every $i\in t(2,L)$, there is $j\in t(2,S)$ such that $k_i(q)$ divides $k_j(u)$. By \cite[Tables 4 and 6]{05VasVd.t}, we have that $\epsilon 11\in J(2,L)$ for some $\epsilon\in\{+,-\}$ unless $L=O_{22}^\varepsilon(q)$ and $q\equiv \varepsilon1\pmod 8$, in which case  $J(2,L)=\{20\}$.  Also $J(2,S)\subseteq \{16,\tau 17\}$. Calculating $k_{11}(\pm q)$ for $q\leq 9$ and $k_{20}(q)$ for $q=7,9$, and leaving only those with all prime divisors congruent to 1 modulo $17$ (or $16$), we derive that $q=9$ and $i=20$, with $k_{20}(9)=42521761$. It is easy to verify that $k_{20}(9)\not\in \pi(S)$, and this contradiction completes the proof.
\end{proof}

We are now in a position to prove  Theorem \ref{t:main}. Let $G$ be a finite group that is isospectral to $L$ but not an almost simple group with socle $L$. Recall that by Lemma \ref{l:reduction}, the group $G$ has a unique nonabelian composition factor $S$ and  $S$ is a classical group in characteristic other than the defining characteristic of $L$. Furthermore, $t(S)\in\{t(L)+1, t(L)\}$ by Lemma \ref{l:rest}.  The case $t(S)=t(L)+1$ is not possible by Lemmas \ref{l:t(S)=t(L)+1so}--\ref{l:t(S)=t(L)+1luo}, while the case $t(S)=t(L)$ is excluded in Lemmas \ref{l:t(S)=t(L)lue}--\ref{l:t(S)=t(L)luo}, and so the theorem follows.

\section*{Acknowledgments}

M. A. Grechkoseeva and A. V. Vasil’ev acknowledge the support of the Russian Science Foundation, Project No. 24-11-00127, https://rscf.ru/en/project/24-11-00127/.

\end{document}